\documentclass{svmult}

\usepackage{latexsym,amssymb,amsgen,amsmath,amsxtra,amsfonts,mathtools}
\usepackage{mathrsfs}
\usepackage[mathscr]{euscript}

\usepackage{tikzexternal} 
\tikzexternalize

\usepackage{microtype}
\usepackage{url}
\usepackage{booktabs}

\DeclareMathOperator{\sEnd}{\mathscr{E}\hspace{-1pt}\mathit{nd}}

\smartqed 

\begin{document}

\motto[0.5\textwidth]{The correspondence between Higgs bundles and local systems can be viewed as a Hodge theorem for non-abelian cohomology. \\ \flushright{--- Carlos T. Simpson \cite{MR1179076}}
}

\title*{Introduction to \\ Nonabelian Hodge Theory}
\subtitle{Flat connections, Higgs bundles and \\ complex variations of Hodge structure}
\author{Alberto Garc{\'\i}a-Raboso \and Steven Rayan}
\institute{Alberto Garc{\'\i}a-Raboso,
  \email{agraboso@math.toronto.edu}\\
  Steven Rayan,
  \email{stever@math.toronto.edu}
}

\maketitle

\setcounter{minitocdepth}{2}
\dominitoc

\abstract*{}

\section{Introduction}

Hodge theory bridges the topological, smooth and holomorphic worlds. In the abelian case, these are embodied by the Betti, de Rham and Dolbeault cohomology groups, respectively, of a smooth compact K{\"a}hler manifold, $X$. The standard isomorphisms
$$
H^1_B(X, \mathbb{C}) \cong \operatorname{Hom}(H_1(X, \mathbb{Z}), \mathbb{C}) \cong \operatorname{Hom}(\pi_1 X, \mathbb{C})
$$
then assert that we are, in particular, looking at representations of the fundamental group of $X$ in the additive group of the complex numbers.

Therefore, it is only natural to ask which smooth and holomorphic objects realize representations of $\pi_1 X$ in other, possibly nonabelian, groups. In the first case, a complete answer is given by the Riemann-Hilbert correspondence: representations of $\pi_1 X$ in the general linear group ${\fontfamily{cmss}\selectfont\textbf{GL}}_r(\mathbb{C})$ (up to conjugacy) are equivalent to $C^\infty$ vector bundles on $X$ equipped with a flat connection.

The first results making a connection with the holomorphic world appeared in the works of Narasimhan and Seshadri \cite{MR0184252,MR0233371} in the 1960s: they discovered that unitary representations of the fundamental group of a compact Riemann surface yield stable\footnote{The appropriate stability condition in this setting is \emph{slope stability}; for holomorphic vector bundles, this is the same as Definition \ref{definition:Stability} with $\phi = 0$.} holomorphic vector bundles. Their correspondence was extended to higher-dimensional compact K{\"a}hler manifolds by Donaldson \cite{MR765366,MR885784}, Mehta and Ramanathan \cite{MR751136}, and Uhlenbeck and Yau \cite{MR861491}.

A seminal paper of Hitchin's, \cite{MR887284}, introduced the notion of a Higgs field, the data of which incorporates the non-unitary part of a ${\fontfamily{cmss}\selectfont\textbf{GL}}_r(\mathbb{C})$-representation. A flurry of activity ---Donaldson \cite{MR887285}, Diederich and Ohsawa \cite{MR817167}, Beilinson and Deligne (unpublished), Corlette \cite{MR965220}, Jost and Yau \cite{MR690192,MR1173045}, Simpson \cite{MR944577}--- culminated in Simpson's correspondence \cite{MR1179076} between ${\fontfamily{cmss}\selectfont\textbf{GL}}_r(\mathbb{C})$-representations of $\pi_1 X$ and holomorphic vector bundles on $X$ equipped with a Higgs field.

It is this \emph{nonabelian Hodge theorem} that the first part of these notes concentrates on. After describing the main actors in \S\ref{section:ZoologyOfConnections}, we will state it in \S\ref{section:CorrespondenceHiggsBundlesAndLocalSystems} and give an outline of its proof (closely following \cite{MR1179076}) in \S\ref{section:ASketchOfProof}. In Figure \ref{figure:Diagram} we have represented all of the objects that will appear in \S\ref{section:ZoologyOfConnections}--\S\ref{section:ASketchOfProof} with an emphasis on the world in which each one of them lives, and how they are described (what we call contexts; cf. \S\ref{section:Differentials}). We hope it will help the reader succesfully navigate these sections.

Although we have tried to make our discussion here as self-contained as possible, those wishing for extended expositions on these and other related concepts can refer to the textbooks \cite{MR909698}, \cite{MR2093043} and \cite{MR2359489}.

In the second part (\S\ref{section:TheHyperkaehlerManifold}--\S\ref{section:ExampleCalculations}), we move to the level of moduli spaces: those of Higgs bundles, flat connections, and representations of the fundamental group.  Simpson calls these the Dolbeault, de Rham, and Betti moduli spaces, respectively \cite{MR1307297}.  We focus in particular on the moduli space of Higgs bundles on a curve, because another aspect of Hodge theory ---complex variations of Hodge structure--- arises naturally from the $\mathbb{C}^\times$-action on this space.  (In this survey, we concentrate on the case of ordinary Higgs bundles; for more general Hitchin systems, in which the Higgs field is permitted to take values in an arbitrary line bundle, see \cite{MR1085642,MR1334607,MR1300764}. The genus zero case is treated in, e.g., \cite{MR1049157,MR1086744}.)  In section \S\ref{section:CircleActionAndComplexVariationsOfHodgeStructure}, we explain how the $\mathbb{C}^\times$-action enables one to calculate Betti numbers of the moduli space of Higgs bundles.  As a consequence of the nonabelian Hodge theorem, these Betti numbers are also those of the de Rham and Betti spaces.

For a concrete example of the role that complex variations of Hodge structure play in the topology of Higgs bundle moduli spaces, we give in \S\ref{section:ExampleCalculations} an outline of Hitchin's calculation of the Betti numbers for the space of rank $2$ Higgs bundles.  Historically, after Hitchin's calculation in \cite{MR887284}, the Betti numbers for rank $3$ were computed by Gothen \cite{MR1298999,PBG:95} and the Betti numbers for rank $4$ were calculated by Garc{\'\i}a-Prada, Heinloth, and Schmitt \cite{GHS:11}.  All of these calculations stem from Hitchin's Morse-theoretic localization technique \cite{MR887284}, although the rank $4$ calculation is motivic in nature.

Computing the Betti numbers of these noncompact spaces is a difficult problem, even with the $\mathbb{C}^\times$-action.  There are conjectural Betti numbers for Higgs bundle moduli spaces of all ranks, derived arithmetically by Hausel and Rodriguez-Villegas \cite{MR2453601}, and using physical arguments by Chuang, Diaconescu, and Pan \cite{MR2833316}, demonstrating the diversity of tools employed in the study of Higgs bundles.

In these notes, we are admittedly only skimming a bit of cream from the top of a rich theory.  Dig deeper and you will find:
\begin{itemize}
\item the nonabelian Hodge correspondence for groups other than $\fontfamily{cmss}\selectfont\textbf{GL}_r$ \cite{MR1179076};
\item the nonabelian Hodge correspondence over quasiprojective varieties, in its tame \cite{MR575939,MR1040197,MR1422313,MR1617644,MR2310103,MR2469521} and wild \cite{MR2004129,arXiv1203.6607B,MR2919903,MR2459120} variants;
\item moduli spaces of parabolic Higgs bundles, e.g., \cite{MR1206652,MR1375314,MR1411302,MR1397988,MR2308696,MR2746468};
\item connections to generalized complex geometry, e.g., \cite{MR2747007};
\item the central role of Higgs bundles in the geometric Langlands conjecture, e.g., \cite{MR2537083,MR2648685};
\item the pivotal part played by Higgs bundles in Ng\^o's proof of the fundamental lemma \cite{MR2653248}.
\end{itemize}
Of course, any errors, omissions, or misrepresentations in this survey are our fault alone.

\medskip
\emph{Acknowledgements.} We thank Chuck Doran, David Morrison, Radu Laza, and Johannes Walcher for organizing the Workshop on Hodge Theory in String Theory, as well as Alan Thompson for arranging the concentrated graduate course in which we gave these lectures.  We are indebted to Noriko Yui for encouraging us to contribute these notes, and for organizing a vibrant and productive thematic program at the Fields Institute.  These notes benefited from useful comments and suggestions by Philip Boalch, Tam{\'a}s Hausel, Tony Pantev, and an anonymous referee.  We also thank Marco Gualtieri and Lisa Jeffrey for their support and for providing a stimulating working environment in Toronto.

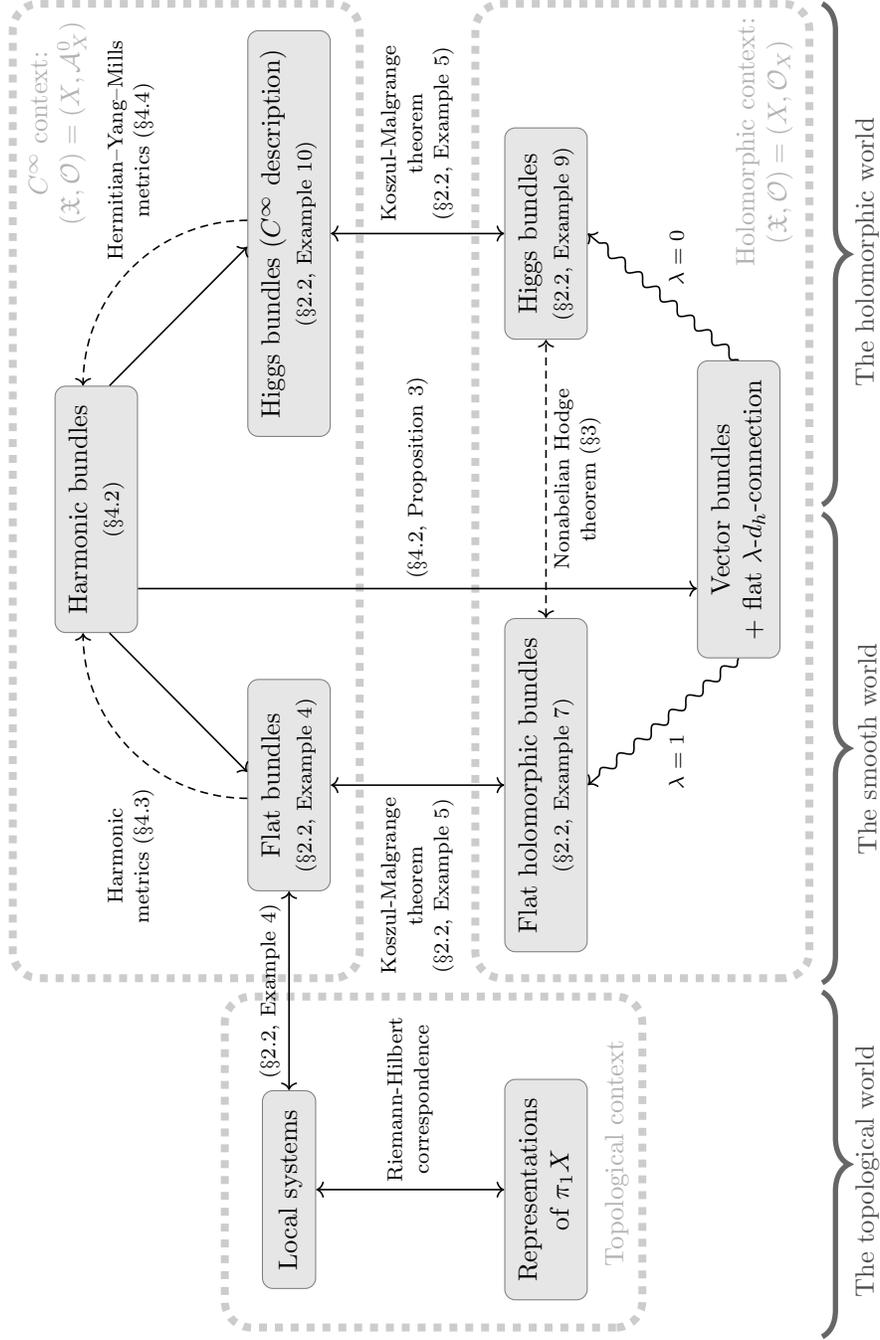
\begin{figure}[p]
\centering
\rotatebox{90}{
\begin{tikzpicture}[x=348pt,y=243pt,
  boxed/.style={rectangle, 
                rounded corners=4pt,
                draw=black!50,
                fill=black!10,
                align=center,
                font=\normalsize,
                text height=2.5em,
                text depth=.5em,
                inner xsep=7pt}]  
  \node[boxed] (Reps) at (-0.04,0.3)
    {Representations \\ of $\pi_1 X$};
  \node[boxed,text height=1.2em] (LocalSystems) at (-0.04,0.7)
    {Local systems};
  \node[boxed] (FlatBundles) at (0.4,0.7)
    {Flat bundles \\ 
      \footnotesize{(\S\ref{section:Connections},
      Example \ref{example:FlatConnections})}
    };
  \node[boxed] (FlatHoloBundles) at (0.4,0.3)
    {Flat holomorphic bundles \\
      \footnotesize{(\S\ref{section:Connections},
      Example \ref{example:HolomorphicFlatConnections})}
    };
  \node[boxed] (HarmonicBundles) at (0.7,1)
    {Harmonic bundles \\
      \footnotesize{(\S\ref{section:HarmonicBundles})}
    };
  \node[boxed] (HiggsBundlesCInfty) at (1,0.7)
    {Higgs bundles ($C^\infty$ description) \\
      \footnotesize{(\S\ref{section:Connections},
      Example \ref{example:HiggsFieldsSmoothDescription})}
    };
  \node[boxed] (HiggsBundles) at (1,0.3)
    {Higgs bundles \\
      \footnotesize{(\S\ref{section:Connections},
      Example \ref{example:HiggsFields})}
    };
  \node[boxed] (HoloLambdaConnections) at (0.7,0)
    {Vector bundles \\
      $+$ flat $\lambda$-$d_h$-connection
    };
  \path[semithick,<->]
    (LocalSystems) edge node[right,align=center]
    {Riemann-Hilbert \\ correspondence} (Reps);
  \path[semithick,<->]
    (LocalSystems) edge node[above,align=center]
    {(\S\ref{section:Connections}, Example \ref{example:FlatConnections})}
    (FlatBundles);
  \path[semithick,<->]
    (HiggsBundlesCInfty) edge node[xshift=38pt,align=center]
    {Koszul-Malgrange \\ theorem \\ (\S\ref{section:Connections},
    Example \ref{example:HolomorphicStructures})} (HiggsBundles);
  \path[semithick,<->]
    (FlatBundles) edge node[xshift=-38pt,align=center]
    {Koszul-Malgrange \\ theorem \\ (\S\ref{section:Connections},
    Example \ref{example:HolomorphicStructures})} (FlatHoloBundles);
  \path[semithick,->]
    ([xshift=-30pt]HarmonicBundles.south) edge node[xshift=42pt,align=center]
    {(\S\ref{section:HarmonicBundles}, Proposition
    \ref{proposition:LambdaConnectionsFromHarmonicBundles})}
    ([xshift=-30pt]HoloLambdaConnections.north);
  \path[semithick,densely dashed,<->]
    (FlatHoloBundles) edge node[below,align=center]
    {Nonabelian Hodge \\ theorem
    (\S\ref{section:CorrespondenceHiggsBundlesAndLocalSystems})}
    (HiggsBundles);
  \path[semithick,->]
    ([yshift=-5pt]HarmonicBundles.west) edge
    ([xshift=5pt]FlatBundles.north);
  \path[semithick,->]
    ([yshift=-5pt]HarmonicBundles.east) edge
    ([xshift=-5pt]HiggsBundlesCInfty.north);
  \path[semithick,densely dashed,->]
    ([xshift=-5pt]FlatBundles.north) edge[out=90,in=180]
    node[ xshift=-40pt,align=center] {Harmonic \\ metrics
    (\S\ref{section:FromFlatBundlesToHarmonicBundles})}
    ([yshift=5pt]HarmonicBundles.west);
  \path[semithick,densely dashed,->]
    ([xshift=5pt]HiggsBundlesCInfty.north) edge[out=90,in=0]
    node[ xshift=45pt,align=center] {Hermitian--Yang--Mills \\
    metrics (\S\ref{section:FromHiggsBundlesToHarmonicBundles})}
    ([yshift=5pt]HarmonicBundles.east);
  \path[semithick,->]
    (HoloLambdaConnections.east) edge[decorate,decoration={snake,
    amplitude=.6mm,segment length=3mm,post length=.7mm}]
    node[xshift=15pt,yshift=-5pt] {$\lambda=0$} (HiggsBundles.south);
  \path[semithick,->]
    (HoloLambdaConnections.west) edge[decorate,decoration={snake,
    amplitude=.6mm,segment length=3mm,post length=.7mm}]
    node[xshift=-15pt,yshift=-5pt] {$\lambda=1$} (FlatHoloBundles.south);
  \begin{pgfonlayer}{background}
    \draw[black!60,line width=2pt,decorate,
      decoration={brace,amplitude=10pt,mirror}]
      (-0.2,-0.13) -- (0.175,-0.13);
    \node[black!70,font=\normalsize] at (-0.02,-0.2)
      {The topological world};
    \draw[black!60,line width=2pt,decorate,
      decoration={brace,amplitude=10pt,mirror}]
      (0.185,-0.13) -- (0.695,-0.13);
    \node[black!70,font=\normalsize] at (0.44,-0.2)
      {The smooth world};
    \draw[black!60,line width=2pt,decorate,
      decoration={brace,amplitude=10pt,mirror}]
      (0.705,-0.13) -- (1.25,-0.13);
    \node[black!70,font=\normalsize] at (0.98,-0.2)
      {The holomorphic world};
    \draw[black!20,line width=3pt,dashed,rounded corners=10pt]
      (-0.19,0.15) rectangle (0.17,0.8);
    \node[black!30,font=\normalsize] at (0,0.19)
      {Topological context};
    \draw[black!20,line width=3pt,dashed,rounded corners=10pt]
      (0.19,0.59) rectangle (1.25,1.13);    
    \node[black!30,font=\normalsize,align=center] at (1.12,1.06)
      {$C^\infty$ context: \\
        $(\mathfrak{X}, \mathcal{O}) = (X, \mathcal{A}^0_X)$
      };
    \draw[black!20,line width=3pt,dashed,rounded corners=10pt]
      (0.19,-0.11) rectangle (1.25,0.41);
    \node[black!30,font=\normalsize,align=center] at (1.1,-0.04)
      {Holomorphic context: \\
        $(\mathfrak{X}, \mathcal{O}) = (X, \mathcal{O}_X)$
      };
  \end{pgfonlayer}
\end{tikzpicture}
}
\caption{The objects in nonabelian Hodge theory and their relationships.}
\label{figure:Diagram}
\end{figure}

\section{Zoology of connections}
\label{section:ZoologyOfConnections}

Both the statement and the inner workings of the nonabelian Hodge correspondence revolve around a collection of operators that are generically known as \emph{connections}. In order to accommodate all the cases we will have need for, we offer some general definitions and statements immediately followed by a detailed discussion of those cases.

\subsection{Differentials}
\label{section:Differentials}

Let $\mathfrak{X}$ be a topological space, $\mathcal{O}$ a sheaf of commutative $\mathbb{C}$-algebras on $\mathfrak{X}$ making the pair $(\mathfrak{X}, \mathcal{O})$ into a locally-ringed space, and $\mathcal{K}$ a locally-free sheaf of coherent $\mathcal{O}$-modules on $\mathfrak{X}$\footnote{The reader unfamiliar with the language of locally-ringed spaces should think of the pair $(\mathfrak{X}, \mathcal{O})$ as selecting a geometric context ---the examples below should make this clear. ``A locally-free sheaf of coherent $\mathcal{O}$-modules on $\mathfrak{X}$'' is then just a way of saying ``a vector bundle of finite rank'' in that context.}.

\begin{definition} \label{def:Differential}
A \emph{differential on the pair $(\mathcal{O}, \mathcal{K})$} is a degree 1, $\mathbb{C}$-linear derivation of the exterior algebra $\bigwedge^\bullet_\mathcal{O} \mathcal{K}$
that squares to zero.
\end{definition}
In concrete terms, a differential consists of a collection of $\mathbb{C}$-linear maps 
$$
D^{(k)} : \textstyle{\bigwedge^k_\mathcal{O}} \mathcal{K} \to \textstyle{\bigwedge^{k+1}_\mathcal{O}} \mathcal{K}, \qquad k\geq 0
$$
satisfying the graded Leibniz rule
$$
D^{(k_1+k_2)} (\omega_1 \wedge \omega_2) = D^{(k_1)}(\omega_1) \wedge \omega_2 + (-1)^{k_1} \omega_1 \wedge D^{(k_2)}\omega_2
$$
for $\omega_1 \in \bigwedge^{k_1}_\mathcal{O} \mathcal{K}$ and $\omega_2 \in \bigwedge^{k_2}_\mathcal{O} \mathcal{K}$, and such that $D^{(k+1)} \circ D^{(k)} = 0$. We will follow the common practice of omitting the superscript from the notation, denoting simply by $D$ all of the above maps ---which leaves us with the tantalising equation $D^2=0$.

Observe that this data turns
$$
0 \longrightarrow \mathcal{O} \xrightarrow{\;D\;} \mathcal{K} \xrightarrow{\;D\;} \textstyle{\bigwedge^2_\mathcal{O}} \mathcal{K} \xrightarrow{\;D\;} \cdots
$$
into a cochain complex of sheaves of $\mathbb{C}$-vector spaces. In fact, the compatibility with products expressed by the graded Leibniz rule makes the pair $\left( \bigwedge^\bullet_\mathcal{O} \mathcal{K}, D\right)$ into a cdga (=commutative differential graded algebra) object in the category of sheaves of $\mathbb{C}$-vector spaces on $\mathfrak{X}$, and its hypercohomology, $\mathbb{H}^k \!\left( X, \left( \bigwedge^\bullet_\mathcal{O} \mathcal{K}, D\right) \right)$, into a graded commutative $\mathbb{C}$-algebra.

\clearpage
\begin{center}
\textbf{Examples}
\end{center}

\begin{example} \label{example:deRhamDifferential}
Let $\mathfrak{X} = X$ be a compact K{\"a}hler manifold, $\mathcal{O} = \mathcal{A}^0_X$ the sheaf of smooth complex-valued functions on it, and $\mathcal{K} = \mathcal{A}^1_X$ the sheaf of smooth complex-valued 1-forms on $X$. The exterior derivative of smooth complex-valued differential forms,
\begin{equation} \label{eq:deRhamDifferential}
d: \mathcal{A}^k_X \to \mathcal{A}^{k+1}_X, \qquad k\geq 0,
\end{equation}
is then a differential on the pair $(\mathcal{A}^0_X, \mathcal{A}^1_X)$. The associated complex,
\begin{equation} \label{eq:deRhamComplex}
0 \longrightarrow \mathcal{A}^0_X \xrightarrow{\;d\;} \mathcal{A}^1_X \xrightarrow{\;d\;} \mathcal{A}^2_X \xrightarrow{\;d\;} \cdots,
\end{equation}
is known as the \emph{de Rham complex} of $X$, for it computes the \emph{de Rham cohomology} of $X$:
\begin{equation} \label{eq:deRhamCohomology}
H_\mathrm{dR}^k(X) = H^k \big( \Gamma(X, \mathcal{A}^\bullet_X), d \big) \cong \mathbb{H}^k \big( X, (\mathcal{A}^\bullet_X, d) \big).
\end{equation}
\end{example}

\begin{example} \label{example:DolbeaultDifferentials}
Recall that, on a complex manifold, we can separate smooth complex-valued $k$-forms into their $(p,q)$ components:
$$
\mathcal{A}^k_X \cong \bigoplus_{p+q=k}\mathcal{A}^{p,q}_X.
$$
The image under the de Rham differential (cf. Example \ref{example:deRhamDifferential}) of a $k$-form $\omega$ of type $(p,q)$ splits as the sum of two terms,
$$
d\omega = \partial\omega + \overline\partial\omega,
$$
where $\partial \omega$ and $\overline\partial \omega$ are $(k+1)$-forms of type $(p+1,q)$ and $(p,q+1)$, respectively. These so-called \emph{Dolbeault operators}, $\partial$ and $\overline\partial$, restrict to differentials on the pairs $(\mathcal{A}^0_X, \mathcal{A}^{1,0}_X)$ and $(\mathcal{A}^0_X, \mathcal{A}^{0,1}_X)$, respectively. The cochain complex associated to the second of these is called the \emph{Dolbeault complex} of $X$,
$$
0 \longrightarrow \mathcal{A}^0_X \xrightarrow{\;\overline\partial\;} \mathcal{A}^{0,1}_X \xrightarrow{\;\overline\partial\;} \mathcal{A}^{0,2}_X \xrightarrow{\;\overline\partial\;} \cdots,
$$
and it gives rise to some of the \emph{Dolbeault cohomology groups} of $X$:
$$
H_{\overline\partial}^{0,k}(X) = H^k \big( \Gamma(X, \mathcal{A}^{0,\bullet}_X), \overline\partial \big) \cong \mathbb{H}^k \big( X, (\mathcal{A}^{0,\bullet}_X, \overline\partial) \big).
$$
The Dolbeault cohomology groups $H^{p,q}_{\overline\partial}(X)$ for $p \neq 0$ will appear later on in our discussion.
\end{example}

\begin{example} \label{example:HolomorphicdeRhamDifferential}
Let $\mathcal{O} = \mathcal{O}_X = \ker \{ \overline\partial : \mathcal{A}^0_X \to \mathcal{A}^{0,1}_X\}$ be the sheaf of holomorphic functions on $X$. The $\mathcal{A}^0_X$-module structure on $\mathcal{A}^{1,0}_X$ naturally restricts to an $\mathcal{O}_X$-module structure on the sheaf $\mathcal{K} = \Omega^1_X = \ker \{ \overline\partial : \mathcal{A}^{1,0}_X \to \mathcal{A}^{1,1}_X\}$ of holomorphic 1-forms on $X$. The \emph{holomorphic de Rham differential},
$$
d_h: \Omega^k_X \to \Omega^{k+1}_X, \qquad k\geq 0,
$$
induced by the de Rham differential on smooth complex-valued differential forms, then becomes a differential on the pair $(\mathcal{O}_X, \Omega^1_X)$. The associated complex,
$$
0 \longrightarrow \mathcal{O}_X \xrightarrow{\;d_h\;} \Omega^1_X \xrightarrow{\;d_h\;} \Omega^2_X \xrightarrow{\;d_h\;} \cdots,
$$
receives the name of \emph{holomorphic de Rham complex} of $X$, and it turns out to also compute the de Rham cohomology of $X$.
\end{example}

\subsection{Connections}
\label{section:Connections}

\begin{definition} \label{def:lambdaDConnection}
Let $D$ be a differential on $(\mathcal{O}, \mathcal{K})$, $\mathcal{V}$ a locally-free sheaf of coherent $\mathcal{O}$-modules on $\mathfrak{X}$, and $\lambda \in \mathbb{C}$. A \emph{$\lambda$-$D$-connection} on $\mathcal{V}$ is a $\mathbb{C}$-linear map, $\nabla: \mathcal{V} \to \mathcal{V} \otimes_\mathcal{O} \mathcal{K}$, satisfying the \emph{$\lambda$-twisted Leibniz rule},
$$
\nabla(fv) = \lambda v \otimes Df + f\nabla v, \qquad\text{for } f \in \mathcal{O}, v \in \mathcal{V}.
$$
\end{definition}

\begin{center}
\textbf{Curvature}
\end{center}

\noindent
There is a canonical extension of such an operator to a collection of $\mathbb{C}$-linear maps,
\begin{equation} \label{eq:ConnectionExtensionToForms}
\nabla^{(k)} : \mathcal{V} \otimes_\mathcal{O} \textstyle{\bigwedge^k_\mathcal{O}} \mathcal{K} \to \mathcal{V} \otimes_\mathcal{O} \textstyle{\bigwedge^{k+1}_\mathcal{O}} \mathcal{K}, \qquad k\geq 0,
\end{equation}
defined through the $\lambda$-twisted Leibniz rule
$$
\nabla^{(k)}(v \otimes \omega) = \lambda v \otimes D\omega + \nabla(v) \wedge \omega, \qquad\text{for } \omega \in \textstyle{\bigwedge^k_\mathcal{O}} \mathcal{K}, v \in \mathcal{V}.
$$
It is an easy exercise to check that the compositions $\nabla^{(k+1)} \circ \nabla^{(k)}$ are linear over $\bigwedge^k_\mathcal{O} \mathcal{K}$, in the sense that
$$
\left( \nabla^{(k+1)} \circ \nabla^{(k)} \right) (v \otimes \omega) = \left( \nabla^{(1)} \circ \nabla^{(0)} \right) (v) \wedge \omega, \quad\text{for } v \in V, \omega \in \textstyle{\bigwedge^k_\mathcal{O}}.
$$
In particular, $\nabla^{(1)} \circ \nabla^{(0)}$ is $\mathcal{O}$-linear and can be considered as a global section of the sheaf of endomorphisms of $\mathcal{V}$ with values in $\bigwedge^2_\mathcal{O} \mathcal{K}$ through the standard duality isomorphism
\begin{align*}
\operatorname{Hom}_\mathcal{O} \!\left( \mathcal{V}, \mathcal{V} \otimes_\mathcal{O} \textstyle{\bigwedge^2_\mathcal{O}} \mathcal{K} \right) &\cong \operatorname{Hom}_\mathcal{O} \!\left( \mathcal{O}, \sEnd_\mathcal{O}(\mathcal{V}) \otimes_\mathcal{O} \textstyle{\bigwedge^2_\mathcal{O}} \mathcal{K} \right) \\
&\cong \Gamma \left( \mathfrak{X}, \sEnd_\mathcal{O}(\mathcal{V}) \otimes_\mathcal{O} \textstyle{\bigwedge^2_\mathcal{O}} \mathcal{K} \right).
\end{align*}

\begin{definition} \label{def:Flatness}
The \emph{curvature} of a $\lambda$-$D$-connection, $\nabla$, on $\mathcal{V}$ is
$$
\nabla^{(1)} \circ \nabla^{(0)} \in \Gamma\left( \mathfrak{X}, \sEnd_\mathcal{O}(\mathcal{V}) \otimes_\mathcal{O} \textstyle{\bigwedge^2_\mathcal{O}} \mathcal{K} \right).
$$
We say that $\nabla$ is \emph{flat} (or \emph{integrable}) if its curvature vanishes.
\end{definition}

Once again, we drop the superscripts and simply denote by $\nabla$ all of the operators \eqref{eq:ConnectionExtensionToForms}, so that flatness is the requirement $\nabla^2 = 0$.

\begin{center}
\textbf{Cohomology}
\end{center}

\noindent
A pair $(\mathcal{V}, \nabla)$ as above where $\nabla$ is flat carries with it an intrinsic notion of cohomology: namely, the hypercohomology of the complex of sheaves of $\mathbb{C}$-vector spaces
\begin{equation} \label{eq:ComplexComputingIntrinsicCohomology}
0 \longrightarrow \mathcal{V} \xrightarrow{\;\nabla\;} \mathcal{V} \otimes_\mathcal{O} \mathcal{K} \xrightarrow{\;\nabla\;} \mathcal{V} \otimes_\mathcal{O} \textstyle{\bigwedge^2_\mathcal{O}} \mathcal{K} \xrightarrow{\;\nabla\;} \cdots.
\end{equation}
The latter is a dg-module over the cdga $\left( \bigwedge^\bullet_\mathcal{O} \mathcal{K}, D\right)$, and hence its hypercohomology, $\mathbb{H}^\bullet \!\left( X, (\mathcal{V}, \nabla) \right)$, comes equipped with a $\mathbb{H}^\bullet \!\left( X, \left( \bigwedge^\bullet_\mathcal{O} \mathcal{K}, D\right) \right)$-module structure.

In the remainder, we will need not only these hypercohomology groups, but also the complex of $\mathbb{C}$-vector spaces that computes them: the complex of \emph{derived global sections} of \eqref{eq:ComplexComputingIntrinsicCohomology}, denoted by $\mathbb{R}\Gamma( X, (\mathcal{V}, \nabla) )$. If the sheaves $\mathcal{V} \otimes_\mathcal{O} \bigwedge^\bullet_\mathcal{O} \mathcal{K}$ are acyclic, this is simply the complex of global sections of \eqref{eq:ComplexComputingIntrinsicCohomology},
$$
\mathbb{R}\Gamma(\mathcal{V}, \nabla) = \big( \Gamma(X, \mathcal{V} \otimes_\mathcal{O} \textstyle{\bigwedge^\bullet_\mathcal{O} \mathcal{K}}), \nabla \big).
$$
Otherwise\footnote{In the smooth setting, all of our sheaves are fine, hence acyclic, and this extra complication does not show up; some care must be exercised in the holomorphic category though.}, it is the complex
$$
\mathbb{R}\Gamma(\mathcal{V}, \nabla) = \big( \Gamma(X, \mathcal{I}^\bullet), \widetilde\nabla \big)
$$
of global sections of any acyclic resolution, $(\mathcal{V} \otimes_\mathcal{O} \textstyle{\bigwedge^\bullet_\mathcal{O}} \mathcal{K}, \nabla) \to (\mathcal{I}^\bullet, \widetilde\nabla)$, of \eqref{eq:ComplexComputingIntrinsicCohomology}.

\begin{center}
\textbf{Tensor products and duals}
\end{center}

\begin{definition}
Given two locally-free sheaves of coherent $\mathcal{O}$-modules equipped with $\lambda$-$D$-connections, $(\mathcal{V}_1, \nabla_1)$ and $(\mathcal{V}_2, \nabla_2)$, their \emph{tensor product} is given by
$$
(\mathcal{V}_1, \nabla_1) \otimes (\mathcal{V}_2, \nabla_2) = (\mathcal{V}_1 \otimes_\mathcal{O} \mathcal{V}_2, \nabla_1 \otimes 1 + 1 \otimes \nabla_2).
$$
\end{definition}

Curvature is additive under this operation, and so $\nabla_1 \otimes 1 + 1 \otimes \nabla_2$ is a flat $\lambda$-$D$-connection if so are $\nabla_1$ and $\nabla_2$.

\begin{definition} \label{definition:DualLambdaConnection}
Let $(\mathcal{V}, \nabla)$ be a locally-free sheaf of coherent $\mathcal{O}$-modules equipped with a $\lambda$-$D$-connection. We define its \emph{dual}, $(\mathcal{V}, \nabla)^\ast$, as the pair $(\mathcal{V}^\ast, \nabla^\ast)$, where the $\lambda$-$D$-connection $\nabla^\ast$ is defined by the equation
$$
\nabla^\ast (\eta) (v) + \eta (\nabla v) = \lambda D (\eta v), \qquad\text{for } \eta \in \mathcal{V}^\ast, v \in \mathcal{V}.
$$
\end{definition}

It is a good exercise to check that $\nabla^\ast$ satisfies the appropriate Leibniz rule, and that it is flat if $\nabla$ is.

\begin{center}
\textbf{Categories of $\lambda$-$D$-connections}
\end{center}

\noindent
For any fixed value of $\lambda$, there is an obvious notion of morphism between locally-free sheaves of coherent $\mathcal{O}$-modules equipped with a $\lambda$-$D$-connection; namely, a morphism from $(\mathcal{V}_1, \nabla_1)$ to $(\mathcal{V}_2, \nabla_2)$ is a morphism of sheaves of $\mathcal{O}$-modules, $f: \mathcal{V}_1 \to \mathcal{V}_2$, that commutes with the connections:
$$
\begin{tikzpicture}[x=90pt, y=40pt]
  \node (11) at (0,1) {$\mathcal{V}_1$};
  \node (12) at (1,1) {$\mathcal{V}_1 \otimes_\mathcal{O} \mathcal{K}$};
  \node (21) at (0,0) {$\mathcal{V}_2$};
  \node (22) at (1,0) {$\mathcal{V}_2 \otimes_\mathcal{O} \mathcal{K}$};
  \path[semithick,->]
    (11) edge node[scale=0.8,yshift=8pt] {$\nabla_1$} (12)
    (11) edge node[scale=0.8,xshift=8pt,pos=0.45] {$f$} (21)
    (12) edge node[scale=0.8,xshift=22pt,pos=0.45] {$f \otimes \operatorname{id}_\mathcal{K}$} (22)
    (21) edge node[scale=0.8,yshift=8pt] {$\nabla_2$} (22);
\end{tikzpicture}
$$
We will refer to the category determined by these arrows as the \emph{category of $\lambda$-$D$-connections}. In terms of the duals and tensor products, the above definition of morphisms is equivalent to setting
\begin{multline*}
\operatorname{Hom} \big( (\mathcal{V}_1, \nabla_1) , (\mathcal{V}_2, \nabla_2) \big) \\ = \ker \left\{ \Gamma(X, \mathcal{V}_1^\ast \otimes_\mathcal{O} \mathcal{V}_2) \xrightarrow{\;\;\nabla_1^\ast \otimes 1 + 1 \otimes \nabla_2\;\;} \Gamma(\mathcal{V}_1^\ast \otimes_\mathcal{O} \mathcal{V}_2 \otimes_\mathcal{O} \mathcal{K}) \right\}.
\end{multline*}

If we restrict our attention to the full \emph{subcategory of flat $\lambda$-$D$-connections}, these morphism sets have the following nice description in terms of the intrinsic cohomologies defined above:
\begin{equation} \label{eq:MorphismsFlatLambdaConnections}
\operatorname{Hom} \big( (\mathcal{V}_1, \nabla_1) , (\mathcal{V}_2, \nabla_2) \big) = \mathbb{H}^0 \big( X, (\mathcal{V}_1, \nabla_1)^\ast \otimes (\mathcal{V}_2, \nabla_2) \big).
\end{equation}
In fact, we can turn this subcategory into a \emph{$\mathbb{C}$-linear dg(=differential graded)-category}\footnote{A $\mathbb{C}$-linear dg-category is a $\mathbb{C}$-linear additive category in which morphisms assemble into complexes of $\mathbb{C}$-vector spaces.} by using, instead of just the zeroth hypercohomology group, the whole complex of derived global sections,
$$
\operatorname{Hom}^\bullet \!\big( (\mathcal{V}_1, \nabla_1) , (\mathcal{V}_2, \nabla_2) \big) = \mathbb{R}\Gamma \big( X, (\mathcal{V}_1, \nabla_1)^\ast \otimes (\mathcal{V}_2, \nabla_2) \big).
$$

\begin{center}
\textbf{$\mathbb{C}^\times$-actions}
\end{center}

\begin{lemma} \label{lemma:MultiplyingAConnection}
Let $\nabla$ be a $\lambda$-$D$-connection on $\mathcal{V}$, and $\mu \in \mathbb{C}$. Then $\mu\nabla$ is a $\lambda\mu$-$D$-connection on $\mathcal{V}$, which is flat if $\nabla$ is.
\end{lemma}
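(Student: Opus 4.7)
The plan is to verify the two claims directly from the definitions, treating the scaling of $\nabla$ as a purely algebraic manipulation.

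First, I would check the $\lambda\mu$-twisted Leibniz rule for $\mu\nabla$. Starting from the identity
\[
\nabla(fv) = \lambda v \otimes Df + f\nabla v, \qquad f \in \mathcal{O},\ v\in\mathcal{V},
\]
and multiplying both sides by $\mu$ yields
\[
(\mu\nabla)(fv) = \lambda\mu\, v \otimes Df + f\,(\mu\nabla)v,
\]
which is precisely the $\lambda\mu$-twisted Leibniz rule. $\mathbb{C}$-linearity of $\mu\nabla$ is immediate from that of $\nabla$, so $\mu\nabla$ is a $\lambda\mu$-$D$-connection in the sense of Definition \ref{def:lambdaDConnection}.

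Next, I would track the canonical extension \eqref{eq:ConnectionExtensionToForms} of $\mu\nabla$ to higher-degree forms and compare it with the extension of $\nabla$. By the defining formula for the extension of a $\lambda\mu$-$D$-connection, applied to $\mu\nabla$ (which is a $\lambda\mu$-$D$-connection by the previous step), we get
\[
(\mu\nabla)^{(k)}(v \otimes \omega) = \lambda\mu\, v \otimes D\omega + (\mu\nabla)(v)\wedge\omega = \mu\bigl(\lambda v\otimes D\omega + \nabla(v)\wedge\omega\bigr) = \mu\,\nabla^{(k)}(v\otimes\omega)
\]
for $v \in \mathcal{V}$, $\omega \in \bigwedge^k_\mathcal{O}\mathcal{K}$. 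Since the extensions are determined by their behavior on such elementary tensors, this gives $(\mu\nabla)^{(k)} = \mu\,\nabla^{(k)}$ for every $k\geq 0$.

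Composing at levels $0$ and $1$, the curvature of $\mu\nabla$ is
\[
(\mu\nabla)^{(1)} \circ (\mu\nabla)^{(0)} = \mu^2\bigl(\nabla^{(1)}\circ\nabla^{(0)}\bigr),
\]
so it equals $\mu^2$ times the curvature of $\nabla$. In particular, if $\nabla$ is flat then so is $\mu\nabla$, completing the proof. There is no real obstacle here; the only point requiring a moment's thought is that the extension to forms depends on the twisting parameter, so one must apply the extension formula with $\lambda\mu$ (not $\lambda$) when computing $(\mu\nabla)^{(k)}$ — a bookkeeping issue that the calculation above makes transparent.
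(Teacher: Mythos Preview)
Your proof is correct. The paper states this lemma without proof (treating it, like the subsequent Lemma~\ref{lemma:LinearCombinationOfConnections}, as a triviality left to the reader), and your direct verification from the definitions is exactly the intended argument; the only subtlety---applying the extension formula with parameter $\lambda\mu$ rather than $\lambda$---you handle correctly.
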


For $\lambda = 0$, this lemma defines an action of the multiplicative group $\mathbb{C}^\times$ on the category of $0$-$D$-connections that takes the full subcategory of flat $0$-$D$-connections into itself.

On the other hand, for $\lambda \neq 0$, it sets up an equivalence between the category of $1$-$D$-connections and that of $\lambda$-$D$-connections ---which is furthermore an equivalence of (dg-)categories between the corresponding full subcategories of flat objects.

\begin{center}
\textbf{Linear combinations}
\end{center}

\noindent
The following observation, whose proof is left as a trivial exercise for the reader, will be crucial in \S\ref{section:ASketchOfProof} below.

\begin{lemma} \label{lemma:LinearCombinationOfConnections}
For $i = 1, 2$, let $\nabla_i$ be a $\lambda_i$-$D$-connection on $\mathcal{V}$, and $\mu_i \in \mathbb{C}$. Then, $\mu_1\nabla_1 + \mu_2\nabla_2$ is a $(\mu_1\lambda_1 + \mu_2\lambda_2)$-$D$-connection on $\mathcal{V}$.
\end{lemma}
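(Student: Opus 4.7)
The proof will be a direct verification of Definition \ref{def:lambdaDConnection} for the map $\mu_1\nabla_1 + \mu_2\nabla_2$. There are two things to check: that it is $\mathbb{C}$-linear as a map $\mathcal{V} \to \mathcal{V} \otimes_\mathcal{O} \mathcal{K}$, and that it satisfies the $(\mu_1\lambda_1 + \mu_2\lambda_2)$-twisted Leibniz rule. No use of flatness or of any property of the differential $D$ beyond its role in the definition is required.

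First I would dispatch $\mathbb{C}$-linearity, which is immediate: $\nabla_1$ and $\nabla_2$ are $\mathbb{C}$-linear by hypothesis, and $\mathbb{C}$-linear combinations of $\mathbb{C}$-linear maps between fixed $\mathbb{C}$-vector spaces are $\mathbb{C}$-linear. Then, for $f \in \mathcal{O}$ and $v \in \mathcal{V}$, I would expand
$$
(\mu_1\nabla_1 + \mu_2\nabla_2)(fv) = \mu_1\nabla_1(fv) + \mu_2\nabla_2(fv),
$$
apply the $\lambda_i$-twisted Leibniz rule to each summand to obtain
$$
\mu_1\bigl(\lambda_1 v \otimes Df + f\nabla_1 v\bigr) + \mu_2\bigl(\lambda_2 v \otimes Df + f\nabla_2 v\bigr),
$$
and regroup to recognize $(\mu_1\lambda_1 + \mu_2\lambda_2)\, v \otimes Df + f(\mu_1\nabla_1 + \mu_2\nabla_2)v$, which is exactly the $(\mu_1\lambda_1 + \mu_2\lambda_2)$-twisted Leibniz rule.

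There is no real obstacle: the content of the lemma is simply that the twisted Leibniz rule is affine-linear in the parameter $\lambda$, so that forming $\mathbb{C}$-linear combinations of connections produces a connection whose twist parameter is the corresponding $\mathbb{C}$-linear combination of the original twist parameters. The proof reduces to collecting the coefficients of $v \otimes Df$ and of the term proportional to $f$, which is why the authors relegate it to a trivial exercise.
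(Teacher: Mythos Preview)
Your proof is correct and is exactly the intended direct verification: the paper does not supply a proof at all, explicitly leaving it as ``a trivial exercise for the reader.'' Your expansion and regrouping of the Leibniz rule is precisely that exercise carried out.
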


\begin{center}
\textbf{Examples}
\end{center}

\begin{example} \label{example:FlatConnections}
(cf. Example \ref{example:deRhamDifferential})
Let $E$ be a $C^\infty$ complex vector bundle\footnote{When we say vector bundle, we will always mean \emph{of finite rank}.} on $X$. A flat $1$-$d$-connection, $\nabla$, in the sense of Definition \ref{def:lambdaDConnection} is what is classically known as a \emph{flat connection} on $E$. The pair $(E, \nabla)$ then goes by the name of \emph{flat bundle} on $X$.

Its flat sections (i.e., those annihilated by the connection) form a locally constant sheaf of $\mathbb{C}$-vector spaces ---a \emph{local system}---, and the flat bundle can be completely recovered from the latter. Indeed, if $L$ is a local system, the vector bundle $L \otimes_\mathbb{C} \mathcal{A}^0_X$ has a natural flat connection defined by
$$
\nabla(l \otimes f) = l \otimes df, \qquad\text{for } l \in L, f \in \mathcal{A}^0_X.
$$

On the other hand, local systems encode representations of the fundamental group of the underlying topological space through the Riemann-Hilbert correspondence. This shows that, although their definition makes explicit use of the smooth structure of $X$, the category of flat bundles is independent of it: they are topological in nature.

Notice that the trivial line bundle, $\mathcal{A}^0_X$, admits a canonical flat connection, given by the de Rham differential \eqref{eq:deRhamDifferential} itself. The complex \eqref{eq:ComplexComputingIntrinsicCohomology} computing its intrinsic cohomology coincides with the de Rham complex \eqref{eq:deRhamComplex}, and so $\mathbb{H}^\bullet \!\left( X, (\mathcal{A}^0_X, d) \right)$ is just $H^\bullet_\mathrm{dR}(X)$. More generally, we call the intrinsic cohomology of a flat bundle, $(E, \nabla)$, its \emph{de Rham cohomology}, and we denote it by $H^\bullet_\mathrm{dR}(E, \nabla)$.
\end{example}

\begin{example} \label{example:HolomorphicStructures}
(cf. Example \ref{example:DolbeaultDifferentials})
Let $E$ be a $C^\infty$ complex vector bundle on $X$. A flat $1$-$\overline\partial$-connection, $\overline\partial_E$, on $E$ is called a \emph{holomorphic structure} on $E$. The name is justified by the Koszul-Malgrange integrability theorem \cite{MR0131882} ---a linear version of the celebrated Newlander-Nirenberg theorem \cite{MR0088770}---, which asserts that a $C^\infty$ complex vector bundle equipped with such an operator is the same data as that of a vector bundle in the holomorphic category: the holomorphic sections of $(E, \overline\partial_E)$ are those (smooth) sections of $E$ annihilated by $\overline\partial_E$.
\end{example}

\begin{example}
(cf. Example \ref{example:DolbeaultDifferentials})
The Dolbeault operator $\overline\partial$ is a holomorphic structure on the sheaf $\mathcal{A}^{p,0}_X$ of smooth complex-valued $p$-forms of type $(p,0)$ ---the holomorphic sections are the holomorphic $p$-forms, i.e., the sections of the sheaf $\Omega^p_X$ of Example \ref{example:HolomorphicdeRhamDifferential} above. The intrinsic cohomology of this pair displays the \emph{Dolbeault cohomology} of $X$:
$$
H_{\overline\partial}^{p,q}(X) = H^q(X, \Omega^p_X) \cong H^q \big( \Gamma(X, \mathcal{A}^{p,\bullet}_X), \overline\partial \big) \cong \mathbb{H}^q \big( X, (\mathcal{A}^{p,\bullet}_X, \overline\partial) \big).
$$
\end{example}

\begin{example} \label{example:HolomorphicFlatConnections}
(cf. Example \ref{example:HolomorphicdeRhamDifferential})
Let $\mathcal{E}$ be a holomorphic vector bundle on $X$. A flat $1$-$d_h$-connection, $\nabla$, on $\mathcal{E}$ is called a \emph{holomorphic flat connection} on $\mathcal{E}$. We refer to such a pair, $(\mathcal{E}, \nabla)$, as a \emph{flat holomorphic bundle}.
\end{example}

\begin{example} \label{example:SmoothAndHolomorphicFlatConnections}
We can tie together Examples \ref{example:HolomorphicStructures}
and \ref{example:HolomorphicFlatConnections} to obtain a holomorphic description of the flat bundles of Example \ref{example:FlatConnections}. Indeed, let $(E, \nabla)$ be a flat bundle on $X$. After separating the connection into its $(0,1)$ and $(1,0)$ parts,
$$
d'' : E \to E \otimes_{\mathcal{A}^0_X} \mathcal{A}^{0,1}_X, \qquad d' : E \to E \otimes_{\mathcal{A}^0_X} \mathcal{A}^{1,0}_X,
$$
the flatness condition, $\nabla^2 = 0$, splits up into the following three equations:
$$
\left( d'' \right)^2 = 0, \quad d''d' + d'd'' = 0, \quad \left( d' \right)^2 = 0.
$$
The first one makes $d''$ into a holomorphic structure on $E$; the second one ensures that $d'$ induces a $1$-$d_h$-connection, $\widetilde{d'}$, on the holomorphic vector bundle $\mathcal{E} = (E, d'')$; the third equation expresses the flatness of the latter.

Moreover, the complexes of derived global sections,
$\mathbb{R}\Gamma(X, (E, \nabla))$ and $\mathbb{R}\Gamma(X, (\mathcal{E}, \widetilde{d'}))$, are quasi-isomorphic ---that is, there is a morphism of complexes between them that induces an isomorphism of their cohomologies. Hence the de Rham cohomology of a flat bundle can also be calculated holomorphically.
\end{example}

\begin{example} \label{example:HiggsFields}
Let $\mathcal{F}$ be a holomorphic vector bundle on $X$. The term \emph{Higgs field} was coined by Hitchin \cite{MR887284} to describe a flat $0$-$d_h$-connection, $\phi$, on $\mathcal{F}$ ---the pair $(\mathcal{F}, \phi)$ being referred to as a \emph{Higgs bundle}. It is worth noting that, since $\lambda=0$, the operator $\phi$ is $\mathcal{O}_X$-linear, and its flatness is usually written as $\phi \wedge \phi = 0$, where $\phi \wedge \phi$ is the composition
$$
\mathcal{F} \xrightarrow{\;\;\phi\;\;} \mathcal{F} \otimes_{\mathcal{O}_X} \Omega^1_X \xrightarrow{\;\phi \otimes \mathrm{id}\;} \mathcal{F} \otimes_{\mathcal{O}_X} \Omega^1_X \otimes_{\mathcal{O}_X} \Omega^1_X \xrightarrow{\;\mathrm{id}\otimes(- \wedge -)\;} \mathcal{F} \otimes_{\mathcal{O}_X} \Omega^2_X.
$$

The intrinsic cohomology of a Higgs bundle, $(\mathcal{F}, \phi)$, is known as its \emph{Dolbeault cohomology}, $H^\bullet_\mathrm{Dol}(\mathcal{F}, \phi)$.
\end{example}

\begin{example} \label{example:HiggsFieldsSmoothDescription}
We can also give a description of Higgs bundles in the smooth category. Consider triples, $(F, \overline\partial_F, \theta)$, of a $C^\infty$ complex vector bundle equipped with a flat $1$-$\overline\partial$-connection and a flat $0$-$\partial$-connection subject to the condition
$$
\overline\partial_F \theta + \theta \overline\partial_F = 0;
$$
that is, such that $D'' = \overline\partial_F + \theta$ squares to zero. As in Example \ref{example:HolomorphicStructures}, the pair $(F, \overline\partial_F)$ encodes a holomorphic vector bundle, $\mathcal{F}$. Because of the commutativity condition above, the operator $\theta$ descends to a flat $0$-$d_h$-connection, $\phi$, on $\mathcal{F}$. The Dolbeault cohomology, $H^\bullet_\mathrm{Dol}(\mathcal{F}, \phi)$, of this Higgs bundle also appears as the hypercohomology of the complex
$$
0 \longrightarrow F \xrightarrow{\;D''\;} F \otimes_{\mathcal{A}^0_X} \mathcal{A}^1_X \xrightarrow{\;D''\;} F \otimes_{\mathcal{A}^0_X} \mathcal{A}^2_X \xrightarrow{\;D''\;} \cdots.
$$

The reader should convince himself or herself that, even though $D''$ is not a connection but a sum of two connections, the constructions of tensor products, duals and (dg-)morphisms between Higgs bundles in this description can be given the exact same definition as above.
\end{example}

\begin{example} \label{example:ChernCharactersAndClasses}
All of our previous examples have concentrated on flat connections; however, those with nonvanishing curvature have their importance too. If a $C^\infty$ complex vector bundle, $E$, on $X$ admits a $1$-$d$-connection, $\nabla$, its \emph{Chern characters} can be computed in terms of the curvature of $\nabla$ as follows:
$$
\operatorname{ch}_k(E) = \left[ \frac{1}{k!} \left(\frac{i}{2\pi}\right)^k \operatorname{Tr} \Big( \underbrace{\nabla^2 \wedge \ldots \wedge \nabla^2}_\text{$k$ copies} \Big) \right] \in H_\mathrm{dR}^{2k}(X), \qquad k \geq 0.
$$
Among these topological invariants of $E$, we find its \emph{rank} and its \emph{degree}:
$$
\operatorname{rk}(E) = \operatorname{ch}_0(E), \qquad \operatorname{deg}(E) = \operatorname{ch}_1(E) . [\omega]^{\operatorname{dim}X-1}.
$$
Here the \emph{intersection product} of cohomology classes, $[\alpha]\in H^{k}(X, \mathbb{C})$ and $[\beta] \in H^{\operatorname{dim}_\mathbb{R} X-k}(X, \mathbb{C})$, of complementary degree is defined as the integral of the wedge product of the differential forms representing them:
$$
[\alpha].[\beta] = \int_X \alpha \wedge \beta.
$$

Another, perhaps more well-known, set of invariants of $E$ are its \emph{Chern classes}. They can be written as linear combinations of powers of the Chern characters of $E$, and hence encode essentially the same information (the rank being excepted). For example,
$$\operatorname{c}_0(E) = 1, \quad \operatorname{c}_1(E) = \operatorname{ch}_1(E), \quad \operatorname{c}_2(E) =  \frac{1}{2} \operatorname{ch}_1(E)^2 - \operatorname{ch}_2(E).$$

It is important to keep in mind that these invariants are the same independently of which $1$-$d$-connection on $E$ is used to compute them. In particular, if $E$ admits a flat $1$-$d$-connection, all of its Chern characters and classes vanish, even if calculated using a nonflat $1$-$d$-connection.
\end{example}

\section{The correspondence between Higgs bundles and local systems}
\label{section:CorrespondenceHiggsBundlesAndLocalSystems}

Let $X$ be a compact K{\"a}hler manifold of complex dimension $n$. The \emph{nonabelian Hodge theorem} of Simpson's establishes a (dg-)equivalence between:
\begin{itemize}
\item the (dg-)category of flat holomorphic bundles, $(\mathcal{E}, \nabla)$, on $X$ ---which, recall, can be reconstructed from their local systems of flat sections (cf. Examples \ref{example:FlatConnections} and \ref{example:SmoothAndHolomorphicFlatConnections})---, and
\item a certain full subcategory of the (dg-)category of Higgs bundles, $(\mathcal{F}, \phi)$, on the same manifold.
\end{itemize}
This subcategory is specified by two conditions on objects. The first one is purely topological.

\begin{description}
\item[\textbf{Condition 1:}] the components of the first and second Chern characters of $\mathcal{F}$ along the K{\"a}hler class, $[\omega] \in H^2(X, \mathbb{C})$, of $X$ vanish:
$$
\operatorname{ch}_1(\mathcal{F}) . [\omega]^{n-1} = 0 = \operatorname{ch}_2(\mathcal{F}) . [\omega]^{n-2}.
$$
Equivalently, the first and second Chern classes of $\mathcal{F}$ vanish along $[\omega]$.
\end{description}
The second condition involves both the holomorphic structure of the bundle, and the Higgs field. In order to state it, we need the next definition. In it, $\mu$ is a numerical invariant of vector bundles known as \emph{slope} ---it is simply the quotient of their degree by their rank---, and a holomorphic subbundle $\mathcal{F}' \subset \mathcal{F}$ is \emph{$\phi$-invariant} (one also says that it is \emph{preserved by the Higgs field}) if $\phi(\mathcal{F}') \subset \mathcal{F}' \otimes \Omega^1_X$.
\begin{definition} \label{definition:Stability}
A Higgs bundle $(\mathcal{F}, \phi)$ is said to be \emph{stable}\footnote{We remind the reader that this is the notion of \emph{slope stability}.} (resp., \emph{semistable}) if for every $\phi$-invariant, holomorphic subbundle $\mathcal{F}' \subset \mathcal{F}$ we have $\mu(\mathcal{F}') < \mu(\mathcal{F})$ (resp., $\mu(\mathcal{F}') \leq \mu(\mathcal{F})$).
\end{definition}
\begin{description}
\item[\textbf{Condition 2:}] $(\mathcal{F}, \phi)$ is an iterated extension\footnote{It is the fact that the category of Higgs bundles is abelian that allows us to talk about extensions.} of \emph{polystable} ---that is, direct sums of stable--- Higgs bundles, each of which satisfies Condition 1.
\end{description}
In case $X$ is a smooth complex projective variety, we can give a cleaner characterization.
\begin{description}
\item[\textbf{Condition 2$^\prime$:}] $(\mathcal{F}, \phi)$ is semistable.
\end{description}

\begin{proposition}
Let $X$ be a smooth complex projective variety, and $(\mathcal{F}, \phi)$ a Higgs bundle on it satisfying Condition 1 above. Then, $(\mathcal{F}, \phi)$ satisfies Condition 2 if and only if it satisfies Condition 2$\,^\prime$\footnote{The reader might have noticed that Condition 2 implies Condition 1. Condition 2$^\prime$ is, however, independent of Condition 1, and the equivalence between Conditions 2 and 2$^\prime$ does not hold in the absence of Condition 1.}.
\end{proposition}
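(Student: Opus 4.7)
The plan is to handle the two implications separately, since they are of quite different nature.

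The direction Condition 2 $\Rightarrow$ Condition 2$^\prime$ is essentially formal. Condition 1 forces $\operatorname{ch}_1(\mathcal{F}).[\omega]^{n-1} = 0$, hence $\mu(\mathcal{F}) = 0$, and by hypothesis every polystable constituent in the iterated extension defining $(\mathcal{F}, \phi)$ also satisfies Condition 1, so each has slope $0$ and is in particular semistable. Using the standard fact that a short exact sequence of Higgs bundles of slope $0$ with semistable outer terms has a semistable middle term, an induction along the length of the iterated extension yields semistability of $(\mathcal{F}, \phi)$.

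For Condition 2$^\prime$ $\Rightarrow$ Condition 2, I would first invoke the Jordan--H\"older filtration for semistable Higgs bundles on a smooth projective variety:
\[
0 = \mathcal{F}_0 \subset \mathcal{F}_1 \subset \cdots \subset \mathcal{F}_\ell = \mathcal{F},
\]
where each $\mathcal{F}_i$ is $\phi$-invariant and the successive quotients $\mathcal{G}_i = \mathcal{F}_i/\mathcal{F}_{i-1}$ are stable Higgs bundles of slope $\mu(\mathcal{F}) = 0$. Each $\mathcal{G}_i$ is tautologically polystable and already satisfies $\operatorname{ch}_1(\mathcal{G}_i).[\omega]^{n-1}=0$. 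The only remaining task is to show $\operatorname{ch}_2(\mathcal{G}_i).[\omega]^{n-2}=0$ for every $i$. For this I would cite the Bogomolov--Gieseker inequality for semistable Higgs bundles, due to Simpson: any semistable Higgs bundle $(\mathcal{G}, \psi)$ on $X$ with $\operatorname{ch}_1(\mathcal{G}).[\omega]^{n-1}=0$ satisfies
\[
\operatorname{ch}_2(\mathcal{G}).[\omega]^{n-2} \le 0.
\]
Combining this with additivity of $\operatorname{ch}_2$ in short exact sequences and Condition 1 for $\mathcal{F}$ gives
\[
0 \;=\; \operatorname{ch}_2(\mathcal{F}).[\omega]^{n-2} \;=\; \sum_{i=1}^{\ell} \operatorname{ch}_2(\mathcal{G}_i).[\omega]^{n-2},
\]
a sum of non-positive numbers, so each term must vanish. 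Thus every $\mathcal{G}_i$ satisfies Condition 1, and the Jordan--H\"older filtration realises $(\mathcal{F}, \phi)$ as an iterated extension of the required kind.

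The main obstacle is unquestionably the Bogomolov--Gieseker inequality for Higgs bundles: it is not a formal consequence of the definitions, and its usual proof passes through the Hermitian--Yang--Mills metric supplied by the Higgs-bundle side of the nonabelian Hodge correspondence together with a Chern--Weil-type expression for $\operatorname{ch}_2.[\omega]^{n-2}$ as a sum of non-positive curvature terms. Granted this inequality, the rest---the existence of the Jordan--H\"older filtration, the additivity of Chern characters, and the bookkeeping with slopes---is routine.
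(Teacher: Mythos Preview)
The paper does not actually supply a proof of this proposition: it is stated in \S\ref{section:CorrespondenceHiggsBundlesAndLocalSystems} and then the text immediately moves on. So there is nothing to compare your argument against line by line. That said, your outline is exactly the standard proof (this is how Simpson argues in \cite{MR1179076}): the forward direction is the formal observation that extensions of semistable Higgs sheaves of equal slope are semistable, and the reverse direction combines the Jordan--H\"older filtration with the Bogomolov--Gieseker inequality for semistable Higgs bundles so that the vanishing of $\operatorname{ch}_2(\mathcal{F}).[\omega]^{n-2}$ forces equality, hence vanishing, term by term. Your identification of the Bogomolov--Gieseker inequality as the non-formal input is precisely right.

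One small caveat worth tightening: in dimension greater than one the Jordan--H\"older filtration is taken through \emph{saturated} $\phi$-invariant subsheaves, and the graded pieces $\mathcal{G}_i$ are a priori only torsion-free Higgs sheaves, not locally free. The additivity of $\operatorname{ch}_2$ and the Bogomolov--Gieseker inequality still apply in that generality, so your numerical argument goes through; but to conclude that $(\mathcal{F},\phi)$ is an iterated extension of polystable Higgs \emph{bundles}, as Condition 2 literally demands, one must note that a stable Higgs sheaf with $\operatorname{ch}_1.[\omega]^{n-1}=\operatorname{ch}_2.[\omega]^{n-2}=0$ is in fact locally free (it underlies a harmonic bundle via Theorem~\ref{theorem:PolystableHiggsBundlesAreHermitianYangMills}, hence carries a flat connection). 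This is a one-line addendum, but it is where the projectivity hypothesis and the analytic input are genuinely used a second time.
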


But why should we expect such a correspondence between Higgs bundles and local systems? Our discussion in \S\ref{section:ZoologyOfConnections} provides an answer: holomorphic flat connections are flat $1$-$d_h$-connections, and Higgs fields are flat $0$-$d_h$-connections; flat $\lambda$-$d_h$-connections interpolate between them.

Lemma \ref{lemma:MultiplyingAConnection} offers us a way of constructing a family of flat $\lambda$-$d_h$-connections, one for each $\lambda \in \mathbb{C}^\times$, starting from a holomorphic flat connection. Unfortunately, as $\lambda \to 0$ this leaves us with the zero Higgs field, no matter what the original holomorphic flat connection was.

Although this na{\"\i}ve approach fails to realize Simpson's correspondence, the idea of deforming a flat holomorphic bundle to a Higgs bundle through a family of holomorphic vector bundles equipped with a flat $\lambda$-$d_h$-connection can still be made to work (up to a point: see \S\ref{section:Extensions}). The key idea is to allow the holomorphic structure on the underlying $C^\infty$ bundle to change with the parameter $\lambda$ in a controlled way.

\section{A sketch of proof}
\label{section:ASketchOfProof}

Let $E$ be a $C^\infty$ complex vector bundle on $X$. Recall (cf. Example \ref{example:FlatConnections}) that we can endow $E$ with a flat bundle structure by specifying
\begin{description}
\item[\hspace{10pt}$-$] a flat $1$-$\overline\partial$-connection, $d''$, and
\item[\hspace{10pt}$-$] a flat $1$-$\partial$-connection, $d'$,
\item[\hspace{10pt}$-$] satisfying the commutation relation $d'' d' + d' d'' = 0.$
\end{description}
The sum, $D = d' + d''$, is then a flat $1$-$d$-connection ---we have $D^2 = 0$. Similarly, to give a Higgs bundle structure on $E$ we need (cf. Example \ref{example:HiggsFieldsSmoothDescription})
\begin{description}
\item[\hspace{10pt}$-$] a flat $1$-$\overline\partial$-connection\footnote{A certain overloading of the notation is difficult to avoid at this point without making it overly cumbersome. Hopefully the reader will be able to tell the differential $\overline\partial$ from the connection $\overline\partial$ from the context. The same will unfortunately happen with other symbols.}, $\overline\partial$, and
\item[\hspace{10pt}$-$] a flat $0$-$\partial$-connection, $\theta$,
\item[\hspace{10pt}$-$] subject to the condition $\overline\partial \theta + \theta \overline\partial = 0$.
\end{description}
The sum, $D'' = \overline\partial + \theta$, no longer has a neat description as some kind of connection, but still satisfies the integrability condition $(D'')^2 = 0$.

Our aim below will be to pass between these two kinds of structure by the clever use of metrics on $E$. The natural notion of metric on a $C^\infty$ complex vector bundle is the following.

\subsection{Hermitian metrics}
\label{section:HermitianMetrics}

\begin{definition}
A \emph{Hermitian metric} on a $C^\infty$ complex vector bundle, $E$, is a smooth, $\mathbb{C}$-linear map,
$K: E \otimes_{\mathcal{A}^0_X} \overline{E} \to \mathcal{A}^0_X$, such that, at every point $p \in X$, the following two conditions hold:
\begin{enumerate}
\item $K_p (e, \overline{e'}) = \overline{K_p (e', \overline{e})}$ for all $e, e' \in E_p$, and
\item $K_p(e, \overline{e}) > 0$ for all $e \in E_p$.
\end{enumerate}
\end{definition}

An argument using partitions of unity proves that Hermitian metrics exist on any $C^\infty$ complex vector bundle.

\begin{center}
\textbf{Chern connections}
\end{center}

\begin{proposition} \label{proposition:ExistenceOfChernConnection}
Let $K$ be a Hermitian metric on a $C^\infty$ complex vector bundle, $E$. Given a holomorphic structure, $\overline\partial_E$, on $E$, there exists a unique $1$-$d$-connection on $E$, $\nabla$ ---called the \emph{Chern connection} of $(E, \overline\partial_E)$ with respect to $K$---, such that
\begin{enumerate}
\item the $(0,1)$ part of $\nabla$ is $\overline\partial_E$, and
\item $\nabla$ is a metric connection, i.e., $K(\nabla e, \overline{e'}) + K(e, \overline{\nabla e'}) = d\left( K(e, \overline{e'}) \right)$.
\end{enumerate}
Moreover, the $(1,0)$ part of this Chern connection is flat.
\end{proposition}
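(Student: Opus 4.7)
The plan is to establish uniqueness first, build $\nabla$ from a local formula using a holomorphic frame, and then extract the flatness of the $(1,0)$-part from the Hermitian symmetry of the curvature. For uniqueness, suppose $\nabla_1$ and $\nabla_2$ both satisfy conditions (1) and (2); their difference $B := \nabla_1 - \nabla_2$ is $\mathcal{A}^0_X$-linear (the $\lambda$-twisted Leibniz terms cancel) and takes values in $\sEnd(E) \otimes_{\mathcal{A}^0_X} \mathcal{A}^{1,0}_X$, because the $(0,1)$-parts of $\nabla_1$ and $\nabla_2$ both equal $\overline\partial_E$. Subtracting the two metric identities yields $K(Be, \overline{e'}) + K(e, \overline{Be'}) = 0$ for all smooth sections $e, e'$ of $E$; the first summand is a $(1,0)$-form while the second is a $(0,1)$-form, so both must vanish separately, and non-degeneracy of $K$ then forces $B = 0$.

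For existence, I would work locally. On a small enough open set $U \subseteq X$, pick a holomorphic frame $\{e_\alpha\}$ for $(E, \overline\partial_E)$ in the sense of Example~\ref{example:HolomorphicStructures}, let $K_{\alpha\overline\beta} = K(e_\alpha, \overline{e_\beta})$, and define a connection by the matrix of $(1,0)$-forms $A := K^{-1} \partial K$, i.e.\ $\nabla e_\alpha = \sum_\gamma A^\gamma_\alpha e_\gamma$. The $(0,1)$-part then reduces to $\overline\partial$ on coefficients, which coincides with $\overline\partial_E$ on a holomorphic frame. Metric compatibility becomes the matrix identity $A K + K\, \overline{A}^{\top} = d K$; its $(1,0)$-piece, $AK = \partial K$, is built into the definition of $A$, and the $(0,1)$-piece is its complex conjugate. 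By the uniqueness just established, these local connections agree on overlaps of holomorphic trivializations and hence glue to a global Chern connection on $E$.

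To prove that the $(1,0)$-part $\partial_E$ of $\nabla$ is flat as a $1$-$\partial$-connection, differentiate the metric compatibility identity once more and use $d^2 = 0$ to obtain the pointwise skew-Hermitian relation $K(F_\nabla e, \overline{e'}) + K(e, \overline{F_\nabla e'}) = 0$, where $F_\nabla := \nabla^2$. Decompose $F_\nabla = F^{2,0} + F^{1,1} + F^{0,2}$ into pure bidegree pieces; since complex conjugation exchanges $(p,q)$ and $(q,p)$, separating this identity by type gives $K(F^{2,0} e, \overline{e'}) + K(e, \overline{F^{0,2} e'}) = 0$. But $F^{0,2} = \overline\partial_E^2$ vanishes because $\overline\partial_E$ is a flat $1$-$\overline\partial$-connection (Example~\ref{example:HolomorphicStructures}), so non-degeneracy of $K$ yields $F^{2,0} = \partial_E^2 = 0$, which is precisely the flatness of $\partial_E$.

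The main obstacle is really just bookkeeping with bidegrees and with how complex conjugation acts inside the Hermitian pairing $K$; once those conventions are pinned down, every step reduces to standard local computations with holomorphic frames and to the type-decomposition trick that pure-bidegree forms vanish separately inside a sum constrained to equal a form of a different type.
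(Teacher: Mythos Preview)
Your argument is correct. The existence step, via the local formula in a holomorphic frame, is essentially the paper's own proof; the paper also reads uniqueness directly off that formula, whereas you give a separate type-decomposition argument for uniqueness, which is a clean equivalent. (One bookkeeping slip: with your convention $\nabla e_\alpha = \sum_\gamma A^\gamma_\alpha e_\gamma$ and the metric identity written as $AK = \partial K$, the connection matrix should come out as $(\partial K)K^{-1}$ rather than $K^{-1}\partial K$; this is only a matter of index placement and does not affect the argument.)

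The one genuinely different step is the flatness of the $(1,0)$-part. The paper indicates a direct local computation with the connection matrix: from $\omega = (\partial K)K^{-1}$ one checks $\partial\omega = -\omega\wedge\omega$, whence $F^{2,0} = \partial\omega + \omega\wedge\omega = 0$. You instead differentiate the metric compatibility once more to obtain the skew-Hermitian relation $K(F_\nabla e,\overline{e'}) + K(e,\overline{F_\nabla e'}) = 0$, split by bidegree, and use $F^{0,2} = \overline\partial_E^{\,2} = 0$ to force $F^{2,0} = 0$. Your route is coordinate-free and makes explicit that the vanishing of $F^{2,0}$ is exactly dual, via the metric, to the integrability of $\overline\partial_E$; the paper's route is a one-line calculation once the local formula is in hand. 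Both are standard and entirely valid.
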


\begin{proof}
For $\{ e_\alpha \}_{\alpha=1}^r$ a local holomorphic frame for $E$, denote $K_{\alpha\beta} = K(e_\alpha, \overline{e_\beta})$. Since $\overline\partial_E (e_\alpha) = 0$, we have $\nabla(e_\alpha) = \sum_{\beta=1}^r \omega_{\alpha\beta} e_\beta$ with $\omega_{\alpha\beta} \in \mathcal{A}^{1,0}_X$. The metric condition then yields
$$
dK_{\alpha\beta} = \sum_{\gamma=1}^r \left( \omega_{\alpha\gamma} K_{\gamma\beta} + \overline{\omega_{\beta\gamma}} K_{\alpha\gamma} \right).
$$
The $(1,0)$ part of this equation ---which is the complex-conjugate of its $(0,1)$ part---, reads
$$
\partial K_{\alpha\beta} = \sum_{\gamma=1}^r \omega_{\alpha\gamma} K_{\gamma\beta};
$$
that is, the connection coefficients, $\omega_{\alpha\beta}$, are completely determined by the metric through the formula
$$
\omega_{\alpha\beta} = \sum_{\gamma=1}^r (\partial K_{\alpha\gamma}) (K^{-1})_{\gamma\beta}.
$$
We leave for the reader to show that the $(2,0)$ part of the curvature of $\nabla$ vanishes ---a local computation involving these connection coefficients. \qed
\end{proof}

\begin{corollary} \label{corollary:ComplexConjugateOfChernConnection}
Let $K$ be a Hermitian metric on a $C^\infty$ complex vector bundle, $E$. Given a flat $1$-$\partial$-connection, $\partial_E$, on $E$, there exists a unique $1$-$d$-connection on $E$, $\nabla$, such that
\begin{enumerate}
\item the $(1,0)$ part of $\nabla$ is $\partial_E$, and
\item $\nabla$ is a metric connection.
\end{enumerate}
Moreover, the $(0,1)$ part of $\nabla$ is a holomorphic structure on $E$.
\end{corollary}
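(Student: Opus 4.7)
The plan is to reduce Corollary \ref{corollary:ComplexConjugateOfChernConnection} to Proposition \ref{proposition:ExistenceOfChernConnection} by complex conjugation, rather than repeating the local computation with the roles of $\partial$ and $\overline\partial$ swapped. Given $(E, K, \partial_E)$, I would first pass to the conjugate bundle $\overline E$. Since complex conjugation swaps $\mathcal{A}^{1,0}_X$ and $\mathcal{A}^{0,1}_X$, the conjugate operator $\overline{\partial_E}\colon \overline E \to \overline E \otimes_{\mathcal{A}^0_X} \mathcal{A}^{0,1}_X$ is a $1$-$\overline\partial$-connection on $\overline E$, and it is flat because $\partial_E$ is. By Example \ref{example:HolomorphicStructures} it is therefore a holomorphic structure on $\overline E$. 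The conjugate pairing $\overline K\colon \overline E \otimes_{\mathcal{A}^0_X} E \to \mathcal{A}^0_X$ is a Hermitian metric on $\overline E$ (symmetry and positivity are immediate from the corresponding properties of $K$).

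Next, I would apply Proposition \ref{proposition:ExistenceOfChernConnection} to the pair $(\overline E, \overline{\partial_E})$ with metric $\overline K$. This gives a unique $1$-$d$-connection $\widetilde\nabla$ on $\overline E$ whose $(0,1)$ part is $\overline{\partial_E}$, which is metric with respect to $\overline K$, and whose $(1,0)$ part is flat. Setting $\nabla := \overline{\widetilde\nabla}$ produces a $1$-$d$-connection on $E$: its $(1,0)$ part is the conjugate of the $(0,1)$ part of $\widetilde\nabla$, hence equals $\partial_E$; the metric identity for $\widetilde\nabla$ against $\overline K$ becomes, upon conjugation, the metric identity for $\nabla$ against $K$; and the flatness of the $(1,0)$ part of $\widetilde\nabla$ is precisely the vanishing of the $(0,2)$ part of the curvature of $\nabla$, i.e., the flatness of the $(0,1)$ part of $\nabla$. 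By the Koszul--Malgrange theorem (Example \ref{example:HolomorphicStructures}), this $(0,1)$ part is a holomorphic structure on $E$, which is the \emph{moreover} clause.

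Uniqueness transfers along the same correspondence: if $\nabla_1$ and $\nabla_2$ both satisfy (1) and (2), then $\overline{\nabla_1}$ and $\overline{\nabla_2}$ are both Chern connections for $(\overline E, \overline{\partial_E})$ with respect to $\overline K$, so they agree by Proposition \ref{proposition:ExistenceOfChernConnection}, whence $\nabla_1 = \nabla_2$.

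No step here presents a real obstacle; the only point requiring a moment's care is the bookkeeping of complex conjugation on sections, forms, and the Hermitian pairing, which one should check is compatible with the Leibniz rule so that $\overline{\widetilde\nabla}$ genuinely defines a $\mathbb{C}$-linear $1$-$d$-connection on $E$ (and not merely a conjugate-linear one). An alternative would be to repeat the proof of Proposition \ref{proposition:ExistenceOfChernConnection} verbatim with $\partial \leftrightarrow \overline\partial$, solving $\overline\partial K_{\alpha\beta} = \sum_\gamma \omega_{\alpha\gamma} K_{\gamma\beta}$ in an antiholomorphic frame and checking that the $(0,2)$ part of the resulting curvature vanishes; but the conjugation argument above is shorter and avoids any duplication of the local calculation.
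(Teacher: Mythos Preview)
Your argument is correct and follows exactly the paper's own approach: pass to the conjugate bundle $\overline{E}$, observe that $\partial_E$ induces a holomorphic structure there, and apply Proposition~\ref{proposition:ExistenceOfChernConnection}. The paper's proof is the same two-line reduction, so your write-up simply spells out the bookkeeping that the paper leaves implicit.
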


\begin{proof}
The operator $\partial_E$ induces a holomorphic structure on the complex-conjugate bundle, $\overline{E}$, of $E$. Apply the previous proposition to the latter. \qed
\end{proof}

\begin{center}
\textbf{The $L^2$-product}
\end{center}

\noindent
A Hermitian metric, $K$, on a $C^\infty$ complex vector bundle, $E$, naturally induces a Hermitian inner product on the vector space of its global sections by integration
$$
(e, e')_{L^2} = \int_X K(e, \overline{e'}) \, \frac{\omega^n}{n!}, \qquad\text{for } e, e' \in \Gamma(X, E),
$$
against the volume form, $\omega^n/n!$, determined by the K{\"a}hler form, $\omega$. This can be extended to the whole algebra of (global) differential forms on $X$ with values in $E$ as follows. First, regard the Hermitian metric, $K$, on $E$ as an isomorphism $K^\flat: \overline{E} \to E^\ast$, given by the formula $K^\flat(\overline{e}) = K(-, \overline{e})$. Then use this isomorphism to define the following extension of the usual Hodge star operator:
$$
\ast_E : \mathcal{A}^{q,p}_X \otimes_{\mathcal{A}^0_X} \overline{E} \to \mathcal{A}^{n-p,n-q}_X \otimes_{\mathcal{A}^0_X} E^\ast, \quad \ast_E (\eta \otimes \overline{e}) = \ast\eta \otimes K^\flat(\overline{e}).
$$
The \emph{$L^2$-product} on $\Gamma(X, \mathcal{A}^\bullet_X \otimes_{\mathcal{A}^0_X} E)$ is then given by
\begin{equation} \label{eq:L2Product}
(\alpha, \beta)_{L^2} = \int_X \alpha \wedge \ast_E \overline\beta, \qquad\text{for } \alpha, \beta \in \Gamma(X, \mathcal{A}^\bullet_X \otimes_{\mathcal{A}^0_X} E),
\end{equation}
where $\wedge$ is the exterior product on the form part and the evaluation map, $E \otimes_{\mathcal{A}^0_X} E^\ast \to \mathcal{A}^0_X$, in the bundle part.

For $E = \mathcal{A}^0_X$ the trivial line bundle, this restricts to the well-known \emph{Hodge-Riemann pairing} of differential forms on $X$,
\begin{equation} \label{eq:HodgeRiemannPairing}
(\alpha, \beta)_{L^2} = \int_X \alpha \wedge \ast\overline\beta, \qquad\text{for } \alpha, \beta \in \Gamma(X, \mathcal{A}^\bullet_X),
\end{equation}
and just as the latter satisfies the so-called \emph{Hodge-Riemann bilinear relations}, there is a set of conditions satisfied by the former. Among them, we will point out that there is an extension of the Lefschetz decomposition on $\Gamma(X, \mathcal{A}^\bullet_X)$ to forms with values in $E$, and that it is orthogonal for the $L^2$-product ---just as the original Lefschetz decomposition is for \eqref{eq:HodgeRiemannPairing}.

\subsection{Harmonic bundles}
\label{section:HarmonicBundles}

Let $D = d' + d''$ be a flat bundle structure on $E$, and choose a Hermitian metric, $K$, on $E$. Using Proposition \ref{proposition:ExistenceOfChernConnection}, we construct a flat $1$-$\partial$-connection, $\delta'_K$, with the property that $\delta'_K + d''$ is a metric connection. Similarly, Corollary \ref{corollary:ComplexConjugateOfChernConnection} produces a flat $1$-$\overline\partial$-connection, $\delta''_K$, such that $d' + \delta''_K$ preserves the metric. We now define (cf. Lemma \ref{lemma:LinearCombinationOfConnections})
\begin{align*}
\partial_K &= \frac{d' + \delta'_K}{2} &\text{(a $1$-$\partial$-connection),} \\
\overline\partial_K &= \frac{d'' + \delta''_K}{2} &\text{(a $1$-$\overline\partial$-connection),} \\
\theta_K &= \frac{d' - \delta'_K}{2} &\text{(a $0$-$\partial$-connection),} \\
\overline\theta_K &= \frac{d'' - \delta''_K}{2} &\text{(a $0$-$\overline\partial$-connection).}
\end{align*}
Notice that none of these are, in general, flat. We can nevertheless combine them into
$$
D'_K = \partial_K + \overline\theta_K, \qquad\text{and}\qquad D''_K = \overline\partial_K + \theta_K.
$$
We have that $D = D'_K + D''_K$. This last operator, $D''_K$, is precisely of the type that defines a Higgs bundle structure on $E$; however, we have no reason to expect its \emph{pseudocurvature}\footnote{The prefix pseudo- reflects the fact that $D''_K$ is not a connection, but rather a sum of two connections.}, $\mathbf{G}_K = (D''_K)^2$, to vanish.

Suppose now that, instead of a flat bundle structure, $D$, we start with a Higgs bundle structure, $D'' = \overline\partial + \theta$, on $E$. The choice of a Hermitian metric, $K$, determines a flat $1$-$\partial$-connection, $\partial_K$, through Proposition \ref{proposition:ExistenceOfChernConnection}. We can also define a flat $0$-$\overline\partial$-connection, $\overline\theta_K$, as the adjoint of $\theta$ with respect to the metric\footnote{This is the \emph{minus} the dual of $\theta$ in the sense of Definition \ref{definition:DualLambdaConnection}.}:
$$
K(\overline\theta_K e, \overline{e'}) = K(e, \overline{\theta e'}).
$$
Set now
$$
D'_K = \partial_K + \overline\theta_K, \qquad\text{and}\qquad D_K = D'_K + D''.
$$
Once again, the curvature, $\mathbf{F}_K = D_K^2$, of $D_K$ is, in general, nonzero.

Notice that, in general, we cannot compose these constructions ---flatness of the initial operator is needed. The crucial observation is that, when either ---and hence both--- $\mathbf{F}_K = 0$ or $\mathbf{G}_K = 0$, these two constructions are not only composable, but inverses of each other in the following sense: if the first construction (resp., the second) yields $\mathbf{G}_K = 0$ (resp., $\mathbf{F}_K = 0$), then the second construction (resp., the first) recovers the original flat bundle structure (resp., Higgs bundle structure). We will then say that $D$ and $D''$ are \emph{$K$-related}, or that \emph{$K$ relates $D$ with $D''$}.

\begin{definition}
A \emph{harmonic bundle} on $X$ is a triple $(E, D, D'')$, where $D$ is a flat bundle structure on $E$ and $D''$ is a Higgs bundle structure on $E$, such that there is a Hermitian metric, $K$, on $E$ relating $D$ with $D''$.\end{definition}

\begin{center}
\textbf{Families of $\lambda$-$d_h$-connections}
\end{center}

\begin{proposition} \label{proposition:LambdaConnectionsFromHarmonicBundles}
Given a harmonic bundle on $X$, $(E, D, D'')$, there is a family, $(\mathcal{E}_\lambda, \nabla_\lambda)$, of holomorphic vector bundles on $X$ equipped with a flat $\lambda$-$d_h$-connection such that $(\mathcal{E}_1, \nabla_1) = (E, D)$ and $(\mathcal{E}_0, \nabla_0) = (E, D'')$.
\end{proposition}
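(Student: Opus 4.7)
The plan is to directly exhibit the family by a linear interpolation between the two "halves" of the harmonic structure, and to use the harmonic equations to show that every member of this family is flat.

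Let $K$ be a Hermitian metric relating $D$ with $D''$, and recall from \S\ref{section:HarmonicBundles} the four operators $\partial_K$, $\overline\partial_K$, $\theta_K$, $\overline\theta_K$ produced from $K$, together with their regroupings $D'_K = \partial_K + \overline\theta_K$ and $D''_K = \overline\partial_K + \theta_K$. The $K$-relatedness hypothesis says precisely that the two curvatures $\mathbf{F}_K = D_K^2$ and $\mathbf{G}_K = (D''_K)^2$ both vanish; together with $D^2 = 0$, a short bookkeeping with bidegrees gives the three harmonic identities
\begin{equation*}
(D'_K)^2 = 0, \qquad (D''_K)^2 = 0, \qquad D'_K D''_K + D''_K D'_K = 0,
\end{equation*}
which are what we will actually use.

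For $\lambda \in \mathbb{C}$, set $\mathbb{D}_\lambda = \lambda D'_K + D''_K$, and decompose this into its $(0,1)$- and $(1,0)$-parts,
\begin{equation*}
\overline\partial^\lambda := \overline\partial_K + \lambda\,\overline\theta_K, \qquad \nabla_\lambda := \theta_K + \lambda\,\partial_K,
\end{equation*}
so that $\mathbb{D}_\lambda = \overline\partial^\lambda + \nabla_\lambda$. By Lemma \ref{lemma:LinearCombinationOfConnections} the operator $\overline\partial^\lambda$ is a $1$-$\overline\partial$-connection on $E$ and $\nabla_\lambda$ is a $\lambda$-$\partial$-connection on $E$. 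Expanding using the harmonic identities,
\begin{equation*}
\mathbb{D}_\lambda^2 = \lambda^2 (D'_K)^2 + \lambda\bigl(D'_K D''_K + D''_K D'_K\bigr) + (D''_K)^2 = 0
\end{equation*}
identically in $\lambda$.

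Now read off $\mathbb{D}_\lambda^2 = 0$ by bidegree. The $(0,2)$-component gives $(\overline\partial^\lambda)^2 = 0$, so by the Koszul--Malgrange theorem (Example \ref{example:HolomorphicStructures}) the pair $\mathcal{E}_\lambda := (E, \overline\partial^\lambda)$ is a holomorphic vector bundle. The $(1,1)$-component is $\overline\partial^\lambda \nabla_\lambda + \nabla_\lambda \overline\partial^\lambda = 0$, which says precisely that $\nabla_\lambda$ preserves the holomorphic structure of $\mathcal{E}_\lambda$ and therefore descends to a $\lambda$-$d_h$-connection on $\mathcal{E}_\lambda$ (exactly as in Examples \ref{example:SmoothAndHolomorphicFlatConnections} and \ref{example:HiggsFieldsSmoothDescription}). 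Finally the $(2,0)$-component gives $\nabla_\lambda^2 = 0$, i.e.\ flatness of this $\lambda$-$d_h$-connection.

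It remains to identify the endpoints. At $\lambda = 0$ we have $\overline\partial^0 = \overline\partial_K$ and $\nabla_0 = \theta_K$, so $\mathbb{D}_0 = D''_K = D''$ and $(\mathcal{E}_0, \nabla_0)$ is the Higgs bundle $(E, D'')$. At $\lambda = 1$ we obtain $\overline\partial^1 = \overline\partial_K + \overline\theta_K = d''$ and $\nabla_1 = \theta_K + \partial_K = d'$, so $\mathbb{D}_1 = D'_K + D''_K = D$ and $(\mathcal{E}_1, \nabla_1) = (E, D)$ under the holomorphic reformulation of flat bundles of Example \ref{example:SmoothAndHolomorphicFlatConnections}. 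The main technical step is the unpacking of the $K$-relatedness assumption into the three harmonic identities above; once these are in hand, the proof is a routine bidegree decomposition of the single equation $\mathbb{D}_\lambda^2 = 0$.
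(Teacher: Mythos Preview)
Your argument is correct and is exactly the paper's approach: set $D_\lambda = \lambda D'_K + D''_K$, use $D_\lambda^2 = 0$, and read off the holomorphic structure, the $\lambda$-$d_h$-connection, and its flatness from the $(0,2)$, $(1,1)$, and $(2,0)$ components respectively. Your write-up is in fact more explicit than the paper's, which simply asserts $D_\lambda^2 = 0$ and the flatness of the $(0,1)$- and $(1,0)$-parts.

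One small caveat on your justification of the three harmonic identities: in the $K$-related situation $D_K = D'_K + D'' = D$, so $\mathbf{F}_K = 0$ is literally the same equation as $D^2 = 0$, and you are left with only the two conditions $D^2 = 0$ and $(D''_K)^2 = 0$. Bidegree bookkeeping alone then yields only $(D'_K)^2 + D'_K D''_K + D''_K D'_K = 0$, not the two pieces separately. To obtain $(D'_K)^2 = 0$ you need a bit more of the construction: $\partial_K^2 = 0$ comes from the Chern connection (Proposition~\ref{proposition:ExistenceOfChernConnection}), $\overline\theta_K^2 = 0$ is the metric adjoint of $\theta_K^2 = 0$, and $\partial_K\overline\theta_K + \overline\theta_K\partial_K = 0$ is the metric adjoint of the Higgs condition $\overline\partial_K\theta_K + \theta_K\overline\partial_K = 0$. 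This is not a gap relative to the paper---which does not justify $D_\lambda^2 = 0$ at all---but the phrase ``short bookkeeping with bidegrees'' undersells what is actually needed.
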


\begin{proof}
The family is given by the operator $D_\lambda = \lambda D' + D''$ on $E$. Indeed, for each $\lambda \in \mathbb{C}$,
\begin{description}
\item[\hspace{10pt}$-$] its $(0,1)$ part, $\overline\partial + \lambda \overline\theta$, is a flat $1$-$\overline\partial  $-connection, and
\item[\hspace{10pt}$-$] its $(1,0)$ part, $\lambda \partial + \theta$, is a flat $\lambda$-$\partial$-connection.
\end{description}
Since $D_\lambda^2 = 0$, these two connections commute, and so $\lambda \partial + \theta$ induces a flat $\lambda$-$d_h$-connection, $\nabla_\lambda$, on the holomorphic vector bundle $\mathcal{E}_\lambda = (E, \overline\partial + \lambda \overline\theta)$. \qed
\end{proof}

These are precisely the families we were looking for. The question now is which flat (resp., Higgs) bundles admit a Hermitian metric, $K$, such that $\mathbf{G}_K = 0$ (resp., $\mathbf{F}_K = 0$), and whether the choice of such a metric is unique. We will tackle these issues in \S\ref{section:FromFlatBundlesToHarmonicBundles} (resp., \S\ref{section:FromHiggsBundlesToHarmonicBundles}).

\begin{center}
\textbf{Tensor products and duals}
\end{center}

\noindent
Suppose $(E, D_E, D''_E)$ and $(F, D_F, D''_F)$ are harmonic bundles, and let $K_E$ and $K_F$ be Hermitian metrics relating $D_E$ with $D''_E$, and $D_F$ with $D''_F$, respectively. Then the product metric, $K_E \otimes K_F$, on $E \otimes F$ defined by
$$
(K_E \otimes K_F) (e \otimes f, \overline{e' \otimes f'}) = K_E(e, \overline{e'}) K_F(f, \overline{f'}), \quad\text{for } e, e' \in E, f, f' \in F
$$
relates $D_E \otimes 1 + 1 \otimes D_F$ with $D''_E \otimes 1 + 1 \otimes D''_F$. In other words, the triple
$$
(E \otimes F, D_E \otimes 1 + 1 \otimes D_F, D''_E \otimes 1 + 1 \otimes D''_F)$$
is a harmonic bundle.

Similarly, there is a metric, $K_E^\ast$, on $E^\ast$ ---called the metric dual to $K_E$--- relating $D_E^\ast$ with $(D''_E)^\ast$, and making $(E^\ast, D_E^\ast, (D''_E)^\ast)$ into a harmonic bundle.

\begin{center}
\textbf{The category of harmonic bundles}
\end{center}

\noindent
It is clear that one should want to define morphisms between harmonic bundles in a such a way that the forgetful maps $(E, D, D'') \mapsto (E, D)$ and $(E, D, D'') \mapsto (E, D'')$ become forgetful \emph{functors}. Thus, the obvious guesses would be to set
\begin{align*}
\operatorname{Hom} \big( (E, D_E, D''_E), (F, D_F, D''_F) \big) &= \operatorname{Hom} \big( (E, D_E), (F, D_F) \big),
\intertext{or}
\operatorname{Hom} \big( (E, D_E, D''_E), (F, D_F, D''_F) \big) &= \operatorname{Hom} \big( (E, D''_E), (F, D''_F) \big).
\end{align*}
That these two choices are one and the same follows from applying this next lemma to bundles of the form $E^\ast \otimes F$ (cf. \eqref{eq:MorphismsFlatLambdaConnections}).

\begin{lemma} \label{lemma:IsomorphismDeRhamDolbeaultDegreeZero}
Let $(E, D, D'')$ be a harmonic bundle on $X$, and $e \in E$. Then $De = 0$ if and only if $D''e = 0$ ---and hence $D'e = 0$. Thus there is a natural isomorphism $H^0_\mathrm{dR}(E, D) \cong H^0_\mathrm{Dol}(E, D'')$.
\end{lemma}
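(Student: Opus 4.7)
The natural isomorphism will be the identity on global sections: in the smooth setting, the relevant sheaves $E \otimes_{\mathcal{A}^0_X} \mathcal{A}^\bullet_X$ are fine, so both $H^0_\mathrm{dR}(E, D)$ and $H^0_\mathrm{Dol}(E, D'')$ reduce to the respective kernels on $\Gamma(X, E)$ of $D$ and $D''$. The lemma thus collapses to the claim $De = 0 \iff D''e = 0$ for global sections $e$, with $D'e = De - D''e$ then automatically vanishing.

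My plan is to establish the $L^2$-identity
$$
\|De\|^2_{L^2} \;=\; 2\|D''e\|^2_{L^2} \;=\; 2\|D'e\|^2_{L^2},
$$
from which the equivalence is immediate by nonnegativity of the integrands. The key ingredient is the \emph{K\"ahler identity for harmonic bundles},
$$
\Delta_D \;=\; 2\Delta_{D'} \;=\; 2\Delta_{D''},
$$
where $\Delta_D = DD^\ast + D^\ast D$ and similarly for $D', D''$, with adjoints taken with respect to the $L^2$-product \eqref{eq:L2Product} built from $K$ and $\omega$. This generalizes the classical identity for the Chern connection of a Hermitian holomorphic bundle and, as in that setting, derives from the super-commutator relations $(D')^\ast = -i[\Lambda, D'']$ and $(D'')^\ast = i[\Lambda, D']$, where $\Lambda$ is the adjoint of wedging with $\omega$. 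Granting this, since $e$ has form degree zero, the degree-lowering operators $D^\ast e$ and $(D'')^\ast e$ vanish for trivial reasons, and so pairing $\Delta_D = 2\Delta_{D''}$ against $e$ in $(\cdot,\cdot)_{L^2}$ yields the displayed identity at once.

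The main obstacle is the K\"ahler identities themselves. The two inputs that make them go through are (i) the metric-compatibility of the Chern connection $\partial_K + \overline\partial_K$ afforded by Proposition \ref{proposition:ExistenceOfChernConnection}, and (ii) the self-adjointness of $\theta_K + \overline\theta_K$ with respect to $K$ as an $\sEnd_{\mathcal{A}^0_X}(E)$-valued $1$-form, which is the pluriharmonic property encoded in the very definition of a harmonic bundle. The verification is then a local computation in a unitary frame adapted to K\"ahler normal coordinates, mimicking the classical proof while tracking the extra $\theta$-terms; it is precisely (ii) that makes those extra contributions cancel in the right way, so the harmonic bundle hypothesis is used in an essential way exactly here.
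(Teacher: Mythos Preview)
Your approach is correct and is precisely the one the paper gestures at: it states the lemma without proof, remarking only that the argument is contained in that of the subsequent proposition, which ``parallels the classical case: one needs to talk about K\"ahler identities, Laplacians and \ldots harmonic forms.'' Your reduction to $\|De\|^2_{L^2} = 2\|D''e\|^2_{L^2}$ via $\Delta_D = 2\Delta_{D''}$ on degree-zero sections is exactly this classical argument transplanted to the harmonic-bundle setting.
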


An added benefit of this definition of morphisms between harmonic bundles is that it makes the forgetful functors alluded to above fully-faithful, and hence equivalences onto their essential images.

\medskip
The dg-enhancement is only a tad more delicate to define: we just need to determine what the ``derived global sections'', $\mathbb{R}\Gamma(X, (E, D, D''))$, should be, and then set
$$
\operatorname{Hom} \big( (E, D_E, D''_E), (F, D_F, D''_F) \big) = \mathbb{R}\Gamma(X, (E, D_E, D''_E)^\ast \otimes (F, D_F, D''_F)).
$$
The cue for the correct definition comes from abelian Hodge theory: there, harmonic forms realize both de Rham and Dolbeault cohomology classes; here, we need to consider harmonic forms \emph{with values in $E$}. The latter appear as the cohomology classes of the complex of $\mathbb{C}$-vector spaces,
$$
\mathbb{R}\Gamma(X, (E, D, D'')) = \ker \big\{ D': \Gamma(E \otimes_{\mathcal{A}^0_X} \mathcal{A}^\bullet_X) \to \Gamma(E \otimes_{\mathcal{A}^0_X} \mathcal{A}^{\bullet+1}_X) \big\},
$$
with the restriction of $D''$ to it as differential. Then, the following result ensures that the forgetful functors are also fully-faithful in the dg-sense.

\begin{proposition}
Let $(E, D, D'')$ be a harmonic bundle on $X$. There are natural quasi-isomorphisms
\begin{align*}
\mathbb{R}\Gamma(X, (E, D, D'')) &\longrightarrow \mathbb{R}\Gamma(X, (E, D)), \\
\mathbb{R}\Gamma(X, (E, D, D'')) &\longrightarrow \mathbb{R}\Gamma(X, (E, D'')). \end{align*}
In particular, $H^\bullet_\mathrm{dR}(E, D) \cong H^\bullet_\mathrm{Dol}(E, D'')$.
\end{proposition}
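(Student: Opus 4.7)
My plan is to adapt the standard Hodge theory of compact Kähler manifolds to the harmonic-bundle setting, obtaining a single space of harmonic forms that simultaneously computes the three cohomologies in question. The key algebraic input is that on a harmonic bundle one has
\[
(D')^2 = (D'')^2 = 0, \qquad D'D'' + D''D' = 0,
\]
so that $K^\bullet := \ker D' \subset \Gamma(X, E \otimes_{\mathcal{A}^0_X} \mathcal{A}^\bullet_X)$ is preserved by $D''$; moreover the restrictions of $D$ and $D''$ to $K^\bullet$ coincide, so the two inclusions claimed in the statement are genuine morphisms of complexes, and the task is to show that each is a quasi-isomorphism.

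Next I would fix the Hermitian metric $K$ that witnesses the harmonic structure and equip the form spaces with the $L^2$-product of \eqref{eq:L2Product}. With the formal adjoints $D^*, (D')^*, (D'')^*$ and their Laplacians $\Delta_D, \Delta_{D'}, \Delta_{D''}$ in hand, the technical heart of the proof is a set of Kähler identities for the harmonic bundle: writing $L = \omega \wedge -$ for the Lefschetz operator and $\Lambda$ for its formal adjoint, one has
\[
(D')^* = -\sqrt{-1}\,[\Lambda, D''], \qquad (D'')^* = \sqrt{-1}\,[\Lambda, D'].
\]
These are the main obstacle. They can be verified by splitting $D' = \partial_K + \overline\theta_K$ and $D'' = \overline\partial_K + \theta_K$ by type, reducing the purely $(1,0)$/$(0,1)$ part to the ordinary Kähler identities for the Chern connection $\partial_K + \overline\partial_K$, and handling $\theta_K, \overline\theta_K$ by the fact that they are $\mathcal{O}_X$-linear mutual adjoints. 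The vanishing of the pseudocurvatures $\mathbf{F}_K$ and $\mathbf{G}_K$ in the harmonic case is exactly what is needed for the cross-type terms to cancel, and keeping track of the resulting signs is notoriously delicate. From these identities one deduces Simpson's equality of Laplacians $\Delta_D = 2\Delta_{D'} = 2\Delta_{D''}$.

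With a single elliptic operator controlling all three cohomologies, the standard Hodge-theoretic package on the compact Kähler $X$ gives orthogonal decompositions for each of $D, D', D''$, sharing a common finite-dimensional space $\mathcal{H}^\bullet := \ker \Delta_D$ of harmonic forms. Any such harmonic form is annihilated by $D', (D')^*, D''$ and $(D'')^*$ simultaneously, so $\mathcal{H}^\bullet \subset K^\bullet$, and the harmonic projections produce canonical isomorphisms $\mathcal{H}^\bullet \cong H^\bullet_\mathrm{dR}(E, D)$ and $\mathcal{H}^\bullet \cong H^\bullet_\mathrm{Dol}(E, D'')$. Combining the $\Delta_{D'}$- and $\Delta_{D''}$-decompositions yields the principle of two types (the $D'D''$-lemma),
\[
\operatorname{im} D' \cap \ker D'' \;=\; \ker D' \cap \operatorname{im} D'' \;=\; \operatorname{im}(D'D''),
\]
in direct analogy to the classical $\partial\overline\partial$-lemma.

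To finish, I would show that the inclusion $(\mathcal{H}^\bullet, 0) \hookrightarrow (K^\bullet, D'')$ is itself a quasi-isomorphism: injectivity is immediate from ambient Hodge theory, and for surjectivity, given a cycle $\alpha \in K^\bullet$, decompose $\alpha = h + D''\beta$ via the $D''$-Hodge decomposition with $h \in \mathcal{H}^\bullet$; then $D'\beta$ is $D''$-closed (by anti-commutation, since $\alpha - h \in K^\bullet$) and manifestly lies in $\operatorname{im} D'$, so the $D'D''$-lemma gives $D'\beta = D'D''\zeta$ for some $\zeta$, whence $\beta_0 := \beta - D''\zeta \in K^\bullet$ and $D''\beta_0 = \alpha - h$. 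Given this quasi-isomorphism, two-out-of-three applied to the composites
\[
(\mathcal{H}^\bullet, 0) \longrightarrow (K^\bullet, D'') \longrightarrow (\Gamma(X, E \otimes_{\mathcal{A}^0_X} \mathcal{A}^\bullet_X), D'')
\]
and
\[
(\mathcal{H}^\bullet, 0) \longrightarrow (K^\bullet, D'') \longrightarrow (\Gamma(X, E \otimes_{\mathcal{A}^0_X} \mathcal{A}^\bullet_X), D)
\]
(each outer map a quasi-isomorphism by ordinary and harmonic Hodge theory respectively) delivers the two quasi-isomorphisms of the statement, and with them the final isomorphism $H^\bullet_\mathrm{dR}(E, D) \cong H^\bullet_\mathrm{Dol}(E, D'')$.
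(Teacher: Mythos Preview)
Your proposal is correct and follows precisely the route the paper indicates. The paper does not actually prove this proposition; it merely remarks that ``the proof (which also contains within itself that of Lemma \ref{lemma:IsomorphismDeRhamDolbeaultDegreeZero}) also parallels the classical case: one needs to talk about K{\"a}hler identities, Laplacians and ---as we already announced--- harmonic forms.'' Your argument supplies exactly these ingredients: Simpson's K{\"a}hler identities for $D'$ and $D''$, the equality $\Delta_D = 2\Delta_{D'} = 2\Delta_{D''}$, the resulting common space of harmonic forms, and the $D'D''$-lemma, assembled via a two-out-of-three argument into the claimed quasi-isomorphisms.
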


The easiest case of this proposition should be quite familiar to the reader: applied to the harmonic bundle $(\mathcal{A}^0_X, d, \overline\partial)$, it reproduces the well-known isomorphisms
$$
H^\bullet_\mathrm{dR}(X) \cong \bigoplus_{p+q=\bullet} H^{p,q}_{\overline\partial}(X).
$$
Not only that, but the proof (which also contains within itself that of Lemma \ref{lemma:IsomorphismDeRhamDolbeaultDegreeZero}) also parallels the classical case: one needs to talk about K{\"a}hler identities, Laplacians and ---as we already announced--- harmonic forms.

\subsection{From flat bundles to harmonic bundles}
\label{section:FromFlatBundlesToHarmonicBundles}

The equation we are interested in solving, $\mathbf{G}_K = 0$, is overdetermined: the orthogonality of the Lefschetz decomposition on $\Gamma(X, \mathcal{A}^\bullet_X \otimes_{\mathcal{A}^0_X} E)$ with respect to the $L^2$-product \eqref{eq:L2Product}, implies the relation
\begin{equation} \label{eq:HodgeRiemannBilinearRelations1}
\operatorname{Tr}\left( \mathbf{G}_K \wedge \mathbf{G}_K \right) . [\omega]^{n-2} = C_1 \| \mathbf{G}_K \|_{L^2}^2 + C_2 \| \mathrm{\Lambda} \mathbf{G}_K \|_{L^2}^2,
\end{equation}
where $C_1$ and $C_2$ are nonzero constants, and $\mathrm{\Lambda}$ denotes the formal adjoint to the Lefschetz operator, $L = \omega \wedge -$. Furthermore, the \emph{pseudo-Chern character} on the left hand side of this equality vanishes.

\begin{lemma} \label{lemma:VanishingSecondPseudoChernClass}
$\operatorname{Tr}\left( \mathbf{G}_K \wedge \mathbf{G}_K \right) . [\omega]^{n-2} = 0$.
\end{lemma}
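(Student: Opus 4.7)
The strategy is to recognize the quantity $\operatorname{Tr}(\mathbf{G}_K \wedge \mathbf{G}_K) \cdot [\omega]^{n-2}$ as a topological invariant of the underlying smooth vector bundle $E$---specifically, a multiple of the characteristic number $\operatorname{ch}_2(E) \cdot [\omega]^{n-2}$---which must vanish because $E$ admits the flat connection $D$ (cf.\ the final paragraph of Example~\ref{example:ChernCharactersAndClasses}).

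I would realize this by deforming $D''_K$ to $D$ through the one-parameter family
$$\nabla_\mu := \mu D'_K + D''_K, \qquad \mu \in [0,1],$$
so that $\nabla_0 = D''_K$ and $\nabla_1 = D'_K + D''_K = D$. Each $\nabla_\mu$ is a genuine $1$-$(\mu\partial + \overline\partial)$-connection on $E$; an easy check (which I would record as a preliminary lemma) shows that $\{D'_K, D''_K\}$ is $\mathcal{O}$-linear. Combined with the identity $\mathbf{F}_K + \mathbf{G}_K + \{D'_K, D''_K\} = 0$ coming from expanding $D^2 = 0$, this yields
$$\mathcal{F}_\mu := \nabla_\mu^2 = \mu^2\mathbf{F}_K + \mu\{D'_K, D''_K\} + \mathbf{G}_K = \mu(\mu-1)\mathbf{F}_K + (1-\mu)\mathbf{G}_K,$$
so that $\mathcal{F}_0 = \mathbf{G}_K$ while $\mathcal{F}_1 = 0$.

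I would then prove that
$$\Psi(\mu) := \int_X \operatorname{Tr}(\mathcal{F}_\mu \wedge \mathcal{F}_\mu) \wedge \omega^{n-2}$$
is independent of $\mu$ via a Chern--Simons-style transgression. Differentiating in $\mu$ and invoking the Bianchi identity $[\nabla_\mu, \mathcal{F}_\mu]_{\mathrm{gr}} = 0$ should express $\tfrac{d}{d\mu}\operatorname{Tr}(\mathcal{F}_\mu \wedge \mathcal{F}_\mu)$ as a $(\mu\partial + \overline\partial)$-exact form. Since $\omega$ is K\"ahler---i.e., closed and of pure bidegree $(1,1)$---both $\partial\omega^{n-2} = 0$ and $\overline\partial\omega^{n-2} = 0$, so the exact contribution is killed upon pairing with $\omega^{n-2}$. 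Hence $\Psi'(\mu) \equiv 0$, and
$$0 = \Psi(1) = \Psi(0) = \int_X \operatorname{Tr}(\mathbf{G}_K \wedge \mathbf{G}_K) \wedge \omega^{n-2},$$
which is exactly the assertion of the lemma.

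The main obstacle is making the transgression step rigorous, because $\partial_\mu\nabla_\mu = D'_K$ is itself a connection rather than the $\operatorname{End}(E)$-valued $1$-form that the classical Chern--Simons derivation requires. I would circumvent this by splitting $D'_K = \partial_K + \overline\theta_K$ and $D''_K = \overline\partial_K + \theta_K$ and introducing the genuine $1$-$d$-connection $\nabla_K := \partial_K + \overline\partial_K$ together with the $\operatorname{End}(E)$-valued $1$-form $\Theta_K := \theta_K + \overline\theta_K$; the family $\nabla_\mu$ can then be rewritten as a connection (with varying differential $\mu\partial + \overline\partial$) twisted by an $\operatorname{End}(E)$-valued piece, to which the standard Chern--Weil calculus applies after careful bookkeeping of the $\theta_K, \overline\theta_K$ contributions. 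Alternatively, one may expand $\operatorname{Tr}(\mathcal{F}_\mu \wedge \mathcal{F}_\mu)$ in bidegree and observe that only its $(2,2)$-component contributes to the pairing with the $(n-2,n-2)$-form $\omega^{n-2}$, reducing the lemma to a finite collection of algebraic identities among the bidegree-components of $\mathbf{F}_K$ and $\mathbf{G}_K$ forced by the three bidegree-components of $D^2 = 0$.
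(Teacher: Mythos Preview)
Your overall plan---deform $D''_K$ to the flat $D$ through the family $D_\lambda = \lambda D'_K + D''_K$ and exploit the vanishing of characteristic numbers of the flat bundle $E$---is exactly the paper's, and your computation $\mathcal{F}_\mu = \mu(\mu-1)\mathbf{F}_K + (1-\mu)\mathbf{G}_K$ is correct. But notice that this formula makes $\Psi(1)=0$ \emph{tautological}: every term carries a factor of $(1-\mu)$, so evaluating at $\mu=1$ gives you no information about $\Psi(0)$. The whole weight of the argument therefore rests on proving $\Psi$ is constant, and your transgression sketch does not close: as you yourself note, $\partial_\mu\nabla_\mu = D'_K$ is a connection-type operator (its $(1,0)$-part obeys a $1$-$\partial$-Leibniz rule), not an $\sEnd(E)$-valued $1$-form, so the standard Chern--Simons identity $\tfrac{d}{d\mu}\operatorname{Tr}(\mathcal{F}_\mu^2) = d(\text{something})$ does not apply. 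Your proposed workarounds remain vague, and rewriting in terms of $\nabla_K$ and $\Theta_K$ does not help, since $\nabla_\mu$ is still not of the form $\nabla_K + (\text{$\sEnd(E)$-valued})$.

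The paper sidesteps this entirely with a single rescaling. For $\lambda\neq 0$, set
\[
B_\lambda \;=\; \tfrac{1}{\lambda}\,D_\lambda^{1,0} + D_\lambda^{0,1}
\;=\; \bigl(\partial_K + \tfrac{1}{\lambda}\theta_K\bigr) + \bigl(\overline\partial_K + \lambda\,\overline\theta_K\bigr),
\]
which \emph{is} a genuine $1$-$d$-connection on $E$. Chern--Weil then gives $\int_X \operatorname{Tr}(B_\lambda^2\wedge B_\lambda^2)\wedge\omega^{n-2}=0$ outright, because $E$ admits the flat connection $D$. Now invoke your own bidegree observation: since $\omega^{n-2}$ has type $(n-2,n-2)$, only the $(2,2)$-part of $\operatorname{Tr}(\,\cdot\,\wedge\,\cdot\,)$ contributes, and in that part each occurrence of the $(1,0)$-half picks up exactly one factor of $\lambda^{-1}$ relative to $D_\lambda$. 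Hence
\[
0 \;=\; \int_X \operatorname{Tr}(B_\lambda^2\wedge B_\lambda^2)\wedge\omega^{n-2}
\;=\; \frac{1}{\lambda^2}\int_X \operatorname{Tr}(D_\lambda^2\wedge D_\lambda^2)\wedge\omega^{n-2}
\]
for every $\lambda\neq 0$; letting $\lambda\to 0$ (or noting the right side is polynomial in $\lambda$) yields the lemma. This rescaling is the missing ingredient: it converts your ``pseudo-connections'' into honest connections so that Chern--Weil applies directly, replacing the problematic transgression.
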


\begin{proof}
Consider the operator $D_\lambda = \lambda D'_K + D''_K$ (cf. proof of Proposition \ref{proposition:LambdaConnectionsFromHarmonicBundles}). For each $\lambda \in \mathbb{C}$, we can construct a $1$-$d$-connection, $B_\lambda$, out of it by rescaling its $(1,0)$ part:
$$
B_\lambda = \frac{1}{\lambda} D_\lambda^{1,0} + D_\lambda^{0,1} = \left( \partial_K + \frac{\theta_K}{\lambda} \right) + \left( \overline\partial _K+ \lambda \overline\theta_K \right).
$$
Its curvature computes the Chern classes of $E$. The latter are, however, zero since $E$ admits a flat $1$-$d$-connection ---$D$ (cf. Example \ref{example:ChernCharactersAndClasses}). Then, taking into account that $\omega$ is of type $(1,1)$, we have
$$
0 = \int_X \operatorname{Tr} \left( B_\lambda^2 \wedge B_\lambda^2 \right) \wedge \omega^{n-2} = \frac{1}{\lambda^2} \int_X \operatorname{Tr} \left( D_\lambda^2 \wedge D_\lambda^2 \right) \wedge \omega^{n-2}.
$$
The claim follows by letting $\lambda \to 0$, since, in that case, $D_\lambda \to D''_K$. \qed
\end{proof}

It is therefore enough to demand the component of the pseudocurvature along the K{\"a}hler class to vanish.

\begin{definition}
A Hermitian metric, $K$, on a flat bundle, $(E, D)$, is called \emph{harmonic} if $\mathrm{\Lambda}\mathbf{G}_K = 0$.
\end{definition}

The origin of the term `harmonic' here ---and ultimately in the name `harmonic bundle'--- lies in the fact that the equation $\mathrm{\Lambda}\mathbf{G}_K = 0$ is equivalent to the map
$$\Phi_K: \widetilde{X} \to {\fontfamily{cmss}\selectfont\textbf{GL}}_r(\mathbb{C}) / {\fontfamily{cmss}\selectfont\textbf{U}}(r)$$
classifying the metric $K$ being harmonic as a $\pi_1 X$-equivariant\footnote{The $\pi_1 X$-action on ${\fontfamily{cmss}\selectfont\textbf{GL}}_r(\mathbb{C}) / {\fontfamily{cmss}\selectfont\textbf{U}}(r)$ is that given by the monodromy representation of the flat bundle, $(E, D)$; that on the universal covering space, $\widetilde{X}$, of $X$ comes from deck transformations.} map between Riemannian manifolds \cite{MR965220} ---meaning that it minimizes a certain \emph{energy functional} on the space of all such maps.

\medskip
The following theorem is the first of two results (the other being Theorem \ref{theorem:PolystableHiggsBundlesAreHermitianYangMills}) that contain within themselves the bulk of the analysis needed to prove the nonabelian Hodge theorem. For a brief history of it, we refer the reader to the comments preceding \cite[Lemma 1.1]{MR1179076}.

\begin{theorem} \label{theorem:SemisimpleFlatBundlesAreHarmonic}
A flat bundle admits a harmonic metric if and only if it is semisimple. Moreover, such a harmonic metric is unique up to scalars.
\end{theorem}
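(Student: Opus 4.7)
The plan is to translate the analytic statement about Hermitian metrics on $(E, D)$ into a geometric statement about equivariant maps from the universal cover $\widetilde{X}$ to the symmetric space $N = {\fontfamily{cmss}\selectfont\textbf{GL}}_r(\mathbb{C}) / {\fontfamily{cmss}\selectfont\textbf{U}}(r)$, and then to exploit the non-positive sectional curvature of $N$. As already noted in the excerpt, a Hermitian metric $K$ on $E$ is classified by a $\pi_1 X$-equivariant map $\Phi_K : \widetilde{X} \to N$ (where $\pi_1 X$ acts on $N$ through the monodromy representation $\rho$ of $(E,D)$), the condition $\mathrm{\Lambda} \mathbf{G}_K = 0$ is the Euler--Lagrange equation for the energy functional
\[
\mathcal{E}(\Phi) = \int_X |d\Phi|^2 \, \frac{\omega^n}{n!},
\]
and $N$ is a complete, simply connected manifold of non-positive curvature.

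For the existence direction (semisimple $\Rightarrow$ harmonic metric), I would apply the Eells--Sampson harmonic map heat flow, or equivalently the direct method of the calculus of variations, to $\mathcal{E}$ on the space of equivariant maps. Non-positive curvature of $N$ makes $\mathcal{E}$ convex along geodesics in this space, so subsequential convergence of a minimizing sequence is automatic \emph{provided} that the sequence does not escape to infinity in $N$. Ruling out this escape is the main obstacle, and the whole point of the semisimplicity hypothesis, following Corlette \cite{MR965220}: if a minimizing sequence diverges, one extracts a limiting Busemann-type direction in $N$, which is $\pi_1 X$-invariant and hence singles out a proper $\rho$-invariant parabolic subgroup of ${\fontfamily{cmss}\selectfont\textbf{GL}}_r(\mathbb{C})$ --- equivalently, a proper $\rho$-invariant flag in $\mathbb{C}^r$ admitting no $\rho$-invariant complement. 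Semisimplicity forbids exactly this, so every minimizing sequence is bounded and a harmonic minimizer exists.

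For the converse, suppose $(E, D)$ admits a harmonic metric $K$ and let $E' \subset E$ be any $D$-flat subbundle; write $E''$ for its $K$-orthogonal complement and let $\beta$ be the second fundamental form of $E'$ inside $(E, K)$. The standard Bochner--Weitzenb\"ock identity for harmonic maps into a symmetric space of non-positive curvature, combined with harmonicity of $K$, delivers a pointwise inequality of the form $\Delta |\beta|^2 \geq |\nabla \beta|^2$. Integration over the compact K\"ahler manifold $X$ then forces $\beta = 0$, so $E''$ is also $D$-flat and $D = D|_{E'} \oplus D|_{E''}$. Iterating on the summands exhibits $(E, D)$ as a direct sum of $D$-irreducible pieces, i.e.\ as a semisimple flat bundle.

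Uniqueness up to scalars is a Hartman-type convexity argument. Given two harmonic metrics $K_1, K_2$, consider $u(x) = d_N(\Phi_{K_1}(x), \Phi_{K_2}(x))^2$. The composition formula for the tension field of $u$, together with the non-positive curvature of $N$ and the harmonicity of both $\Phi_{K_1}$ and $\Phi_{K_2}$, yields $\Delta u \geq 0$ on $X$. Since $X$ is compact, $u$ must be constant, and inspecting the equality case in that inequality shows that $\Phi_{K_1}$ and $\Phi_{K_2}$ differ by a parallel geodesic variation in $N$. Restricted to each of the $D$-irreducible summands produced by the previous paragraph, this parallel isometry acts by a scalar by Schur's lemma, so $K_2$ is obtained from $K_1$ by one positive scalar per irreducible summand, as claimed.
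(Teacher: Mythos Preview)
The paper does not prove this theorem at all: it is explicitly presented as one of two analytic ``black boxes'' (the other being Theorem~\ref{theorem:PolystableHiggsBundlesAreHermitianYangMills}), with the reader directed to the discussion preceding \cite[Lemma~1.1]{MR1179076} for attribution and history. There is therefore no argument in the paper to compare your proposal against.

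That said, your outline is essentially Corlette's proof \cite{MR965220} and is, at the level of a sketch, sound: the reformulation of metrics as $\pi_1 X$-equivariant maps to the nonpositively curved symmetric space $N$; minimization of the energy with escape to infinity blocked, under the semisimplicity hypothesis, by the appearance of a $\rho$-invariant parabolic (equivalently, a $\rho$-invariant flag without invariant complement); the Bochner-type argument showing that the $K$-orthogonal complement of a $D$-flat subbundle is again $D$-flat; and the Hartman convexity argument for uniqueness. Two small remarks. First, in your converse step the precise inequality you quote is not quite the form one obtains; the cleaner route is to observe that the $K$-orthogonal projection onto $E'$ is a bounded harmonic section of $\sEnd(E)$ for the induced harmonic structure, hence parallel, which immediately makes $E''$ flat. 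Second, your uniqueness conclusion (``one positive scalar per irreducible summand'') is in fact the correct statement and is what Simpson proves; the paper's phrase ``unique up to scalars'' should be read in this sense.
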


Here semisimplicity has the usual meaning: a flat bundle is said to be \emph{semisimple} if every subbundle preserved by the connection (the natural notion of subobject in the category of flat bundles) is a direct summand. This condition is equivalent to semisimplicity of the representation of $\pi_1 X$ associated to it through the Riemann-Hilbert correspondence (cf. Example \ref{example:FlatConnections}).

The uniqueness up to scalars is the best we could have hoped for, since the constructions in \S\ref{section:HarmonicBundles} are invariant under multiplication of the Hermitian metric by a constant factor.

\begin{corollary} \label{corollary:EquivalenceFlatAndHarmonicBundles}
There is a (dg-)equivalence between the (dg-)category of harmonic bundles on $X$ and that of semisimple flat bundles on $X$.
\end{corollary}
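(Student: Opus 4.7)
The plan is to exhibit the forgetful functor $F: (E, D, D'') \mapsto (E, D)$ as the desired (dg-)equivalence. The three things to verify are: (i) $F$ lands in the subcategory of \emph{semisimple} flat bundles, (ii) $F$ is essentially surjective onto that subcategory, and (iii) $F$ is fully faithful, both at the level of ordinary Hom-sets and at the level of derived global sections.

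For (i), let $(E, D, D'')$ be a harmonic bundle. By definition there exists a Hermitian metric $K$ relating $D$ with $D''$, i.e., with $\mathbf{G}_K = 0$, so in particular $\mathrm{\Lambda}\mathbf{G}_K = 0$ and $K$ is harmonic. Theorem \ref{theorem:SemisimpleFlatBundlesAreHarmonic} then forces $(E, D)$ to be semisimple. For (ii), start with a semisimple flat bundle $(E, D)$. The same theorem produces a harmonic metric $K$, and the construction of \S\ref{section:HarmonicBundles} associates to $(E, D, K)$ an operator $D''_K = \overline\partial_K + \theta_K$ with $\mathbf{G}_K = 0$. The key observation recalled in \S\ref{section:HarmonicBundles} ---that the two constructions are mutually inverse once $\mathbf{G}_K$ (equivalently $\mathbf{F}_K$) vanishes--- guarantees that $K$ relates $D$ with $D''_K$, so $(E, D, D''_K)$ is a harmonic bundle mapping to $(E, D)$ under $F$.

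For (iii), full faithfulness at the set-theoretic level is essentially tautological from the definition adopted in \S\ref{section:HarmonicBundles}: we set
$$\operatorname{Hom}\bigl((E, D_E, D''_E), (F, D_F, D''_F)\bigr) = \operatorname{Hom}\bigl((E, D_E), (F, D_F)\bigr),$$
and Lemma \ref{lemma:IsomorphismDeRhamDolbeaultDegreeZero} (applied to the internal Hom harmonic bundle $E^\ast \otimes F$, which is harmonic by the tensor/dual constructions) ensures this is well-defined and consistent with the alternative definition via $D''$. For the dg-enhancement, recall that the Hom complex of harmonic bundles is $\mathbb{R}\Gamma(X, (E, D_E, D''_E)^\ast \otimes (F, D_F, D''_F))$; since tensor products and duals preserve harmonicity, the quasi-isomorphism
$$\mathbb{R}\Gamma(X, (E, D, D'')) \xrightarrow{\;\sim\;} \mathbb{R}\Gamma(X, (E, D))$$
from the Proposition at the end of \S\ref{section:HarmonicBundles}, applied to $(E, D_E, D''_E)^\ast \otimes (F, D_F, D''_F)$, promotes the bijection of Hom-sets to a quasi-isomorphism of Hom-complexes.

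The only genuinely hard step is (ii), but all its analytic content has been packaged into Theorem \ref{theorem:SemisimpleFlatBundlesAreHarmonic}; everything else is bookkeeping about forgetful functors, the definition of morphisms, and the preservation of harmonicity under tensor product and dual. The uniqueness-up-to-scalars part of Theorem \ref{theorem:SemisimpleFlatBundlesAreHarmonic} is not needed for the equivalence itself, but it reassures us that the essential preimage of a given semisimple flat bundle under $F$ is as small as possible ---a single isomorphism class--- matching the invariance of the constructions in \S\ref{section:HarmonicBundles} under rescaling $K$ by a positive constant.
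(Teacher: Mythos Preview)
Your overall structure matches the paper's proof: full faithfulness is taken from the discussion in \S\ref{section:HarmonicBundles}, and essential surjectivity is reduced to Theorem \ref{theorem:SemisimpleFlatBundlesAreHarmonic}. However, step (ii) contains a genuine gap. You write that for a semisimple flat bundle $(E,D)$, Theorem \ref{theorem:SemisimpleFlatBundlesAreHarmonic} yields a harmonic metric $K$, and then ``the construction of \S\ref{section:HarmonicBundles} associates to $(E,D,K)$ an operator $D''_K = \overline\partial_K + \theta_K$ with $\mathbf{G}_K = 0$.'' But the construction of \S\ref{section:HarmonicBundles} builds $D''_K$ for \emph{any} Hermitian metric, and a harmonic metric by definition only satisfies $\Lambda\mathbf{G}_K = 0$, not $\mathbf{G}_K = 0$. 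The latter is what you need for $K$ to relate $D$ with $D''_K$, i.e., for $(E,D,D''_K)$ to be a harmonic bundle.

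The passage from $\Lambda\mathbf{G}_K = 0$ to $\mathbf{G}_K = 0$ is exactly the content of equation \eqref{eq:HodgeRiemannBilinearRelations1} combined with Lemma \ref{lemma:VanishingSecondPseudoChernClass}: the lemma kills the left-hand side of \eqref{eq:HodgeRiemannBilinearRelations1}, and since $\Lambda\mathbf{G}_K = 0$ by harmonicity, the remaining term $C_1\|\mathbf{G}_K\|_{L^2}^2$ must vanish, whence $\mathbf{G}_K = 0$. This is why the paper's own proof explicitly cites \eqref{eq:HodgeRiemannBilinearRelations1} and Lemma \ref{lemma:VanishingSecondPseudoChernClass} alongside Theorem \ref{theorem:SemisimpleFlatBundlesAreHarmonic} for essential surjectivity. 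Once you insert this step, your argument is complete and coincides with the paper's.
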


\begin{proof}
We already established the fully-faithfulness of the forgetful functor in \S\ref{section:HarmonicBundles}. Essential surjectivity follows from \eqref{eq:HodgeRiemannBilinearRelations1}, Lemma \ref{lemma:VanishingSecondPseudoChernClass} and Theorem \ref{theorem:SemisimpleFlatBundlesAreHarmonic}.
\qed
\end{proof}

\subsection{From Higgs bundles to harmonic bundles}
\label{section:FromHiggsBundlesToHarmonicBundles}

Solving the equation $\mathbf{F}_K = 0$ goes more or less along the same lines as solving $\mathbf{G}_K = 0$. To start with, we also have the relation 
$$
\operatorname{Tr}\left( \mathbf{F}_K \wedge \mathbf{F}_K \right) . [\omega]^{n-2} = C_1 \| \mathbf{F}_K \|_{L^2}^2 + C_2 \| \mathrm{\Lambda} \mathbf{F}_K \|_{L^2}^2,
$$
deduced from the orthogonality of the Lefschetz decomposition. However, the second Chern character on the left hand side is no longer zero for an arbitrary Higgs bundle ---we will have to explicitly ask for it to vanish.

The kind of metrics on Higgs bundles that have been studied in the literature is the following.

\begin{definition}
A Hermitian metric, $K$, on a Higgs bundle, $(E, D'')$, is called \emph{Hermitian--Yang--Mills} if $\mathrm{\Lambda}\mathbf{F}_K = \gamma\operatorname{id}_E$ for some constant, $\gamma$.
\end{definition}

The constant $\gamma$ depends on background geometric data ---the manifold, $X$, the K{\"a}hler class, $[\omega]$, and the bundle, $E$---, but is independent of the metric.

\begin{lemma}
If $K$ is a Hermitian--Yang--Mills metric on $(E, D'')$, then
$$
\gamma = -\frac{2\pi i}{(n-1)!\operatorname{vol(X)}} \, \mu(E).
$$
\end{lemma}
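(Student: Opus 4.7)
The plan is to compute $\deg(E)$ by integrating the trace of the curvature $\mathbf{F}_K$ against $\omega^{n-1}$, then use the Hermitian--Yang--Mills condition to relate that integral to $\gamma$. Although $D_K = D'_K + D''$ is not a \emph{flat} $1$-$d$-connection (indeed, that is precisely what fails in general for a Higgs bundle), it \emph{is} a genuine $1$-$d$-connection on the $C^\infty$ bundle $E$, so by the Chern--Weil description recalled in Example \ref{example:ChernCharactersAndClasses} its curvature computes the Chern characters of $E$. In particular,
$$
\operatorname{ch}_1(E) \;=\; \operatorname{c}_1(E) \;=\; \left[\frac{i}{2\pi}\operatorname{Tr}(\mathbf{F}_K)\right] \in H_{\mathrm{dR}}^2(X).
$$

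With this observation in hand, I would write
$$
\deg(E) \;=\; \operatorname{c}_1(E).[\omega]^{n-1} \;=\; \frac{i}{2\pi}\int_X \operatorname{Tr}(\mathbf{F}_K) \wedge \omega^{n-1}
$$
and then invoke the standard K\"ahler identity
$$
\alpha \wedge \omega^{n-1} \;=\; \frac{1}{n}(\mathrm{\Lambda}\alpha)\,\omega^{n}, \qquad \alpha \in \mathcal{A}^2_X,
$$
applied pointwise to the endomorphism-valued $2$-form $\operatorname{Tr}(\mathbf{F}_K)$ (equivalently, apply it to $\mathbf{F}_K$ and then take the trace, which commutes with $\mathrm{\Lambda}$). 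This gives
$$
\deg(E) \;=\; \frac{i}{2\pi n}\int_X \operatorname{Tr}(\mathrm{\Lambda}\mathbf{F}_K)\,\omega^{n}.
$$

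Now I would plug in the Hermitian--Yang--Mills condition $\mathrm{\Lambda}\mathbf{F}_K = \gamma\operatorname{id}_E$, so that $\operatorname{Tr}(\mathrm{\Lambda}\mathbf{F}_K) = \gamma\,\operatorname{rk}(E)$, and use $\int_X \omega^n = n!\operatorname{vol}(X)$ to obtain
$$
\deg(E) \;=\; \frac{i\,\gamma\,\operatorname{rk}(E)\,(n-1)!\,\operatorname{vol}(X)}{2\pi}.
$$
Solving for $\gamma$ and using $1/i = -i$ together with the definition $\mu(E) = \deg(E)/\operatorname{rk}(E)$ yields the stated formula.

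The proof is essentially bookkeeping, and the only conceptual point to justify carefully is the first step: that one is entitled to compute $\operatorname{c}_1(E)$ from $\mathbf{F}_K$ even though $D_K$ is not the Chern connection of any holomorphic structure (nor even flat). This is reassured by the closing remark in Example \ref{example:ChernCharactersAndClasses} that Chern classes are independent of the choice of $1$-$d$-connection used to represent them. The only other place where care is needed is the constant in the Lefschetz identity $\alpha\wedge\omega^{n-1} = \tfrac{1}{n}(\mathrm{\Lambda}\alpha)\omega^n$, which can be verified by evaluating both sides on $\alpha = \omega$ (using $\mathrm{\Lambda}\omega = n$). No genuine obstacle is expected.
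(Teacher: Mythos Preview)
Your proof is correct and follows essentially the same route as the paper's: both compute $\int_X \operatorname{Tr}(\mathbf{F}_K)\wedge\omega^{n-1}$ two ways, once via Chern--Weil to get $\deg(E)$ and once via the Lefschetz identity $\alpha\wedge\omega^{n-1}=\tfrac{1}{n}(\mathrm{\Lambda}\alpha)\,\omega^n$ together with the Hermitian--Yang--Mills equation to get $\gamma\operatorname{rk}(E)\operatorname{vol}(X)$. The paper phrases the Lefschetz step as ``$\operatorname{Tr}\mathbf{F}_K-\tfrac{1}{n}(\mathrm{\Lambda}\operatorname{Tr}\mathbf{F}_K)\,\omega$ is primitive,'' but this is the same identity you invoke.
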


\begin{proof}
Taking the trace of the Hermitian--Yang--Mills equation and integrating against the volume form, $\omega^n / n!$, we have
$$
\int_X \mathrm{\Lambda} \operatorname{Tr}\mathbf{F}_K \, \frac{\omega^n}{n!} = \gamma \operatorname{rk}(E)\operatorname{vol}(X).
$$
On the other hand,
$$
\int_X \mathrm{\Lambda} \operatorname{Tr}\mathbf{F}_K \, \frac{\omega^n}{n!} = \int_X  \operatorname{Tr}\mathbf{F}_K \wedge \frac{\omega^{n-1}}{(n-1)!} = - \frac{2\pi i}{(n-1)!} \operatorname{deg}(E),
$$
where the first equality results from the fact that
$$
\operatorname{Tr}\mathbf{F}_K - \frac{1}{n} \left( \mathrm{\Lambda} \operatorname{Tr}\mathbf{F}_K \right) \omega$$
is a primitive $(1,1)$ form. The definition of slope, $\mu(E) = \operatorname{deg}(E) / \operatorname{rk}(E)$, completes the proof of the claim. \qed
\end{proof}

The following is the second of our black boxes. Its history is discussed in the comments below \cite[Theorem 1]{MR1179076}.

\begin{theorem} \label{theorem:PolystableHiggsBundlesAreHermitianYangMills}
A Higgs bundle admits a Hermitian--Yang--Mills metric if and only if it is polystable. Such a metric is unique up to scalars.
\end{theorem}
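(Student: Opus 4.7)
The plan is to split the theorem into the three statements: (i) existence of a Hermitian--Yang--Mills metric implies polystability, (ii) polystability implies existence of such a metric, and (iii) uniqueness up to scalars. Direction (i) is elementary Chern--Weil theory; direction (ii) is the hard analytic half, and I would follow the Donaldson--Simpson strategy of minimising an energy functional; uniqueness falls out of a Bochner-type argument applied to the difference of two solutions.

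For direction (i), suppose $K$ is Hermitian--Yang--Mills on $(\mathcal{F}, \phi)$ with $\mathrm{\Lambda}\mathbf{F}_K = \gamma \operatorname{id}_\mathcal{F}$, and let $\mathcal{F}' \subset \mathcal{F}$ be a $\phi$-invariant holomorphic subbundle. Using $K$ to split $\mathcal{F} = \mathcal{F}' \oplus (\mathcal{F}')^\perp$ smoothly, one obtains a Gauss--Codazzi formula expressing $\mathbf{F}_{K|\mathcal{F}'}$ as the restriction of $\mathbf{F}_K$ plus a term quadratic in the second fundamental form $\beta$ and its $\phi$-adjoint. Taking traces, pairing with $\omega^{n-1}/(n-1)!$, and integrating, the $\phi$-invariance of $\mathcal{F}'$ ensures that the Higgs part of $\beta$ contributes a nonpositive term, so that $\mu(\mathcal{F}') \leq \mu(\mathcal{F})$, with equality only if $\beta \equiv 0$, i.e., $\mathcal{F}'$ is an orthogonal, parallel, $\phi$-preserved summand. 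Iterating this splitting and noting that the HYM condition descends to each summand yields polystability in the sense of Condition 2.

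For direction (ii), I would introduce Simpson's modification of the Donaldson functional: fix a background metric $K_0$, and for $K = K_0 h$ with $h$ a positive self-adjoint endomorphism, define a functional $M(K_0, K)$ whose Euler--Lagrange equation is exactly $\mathrm{\Lambda}\mathbf{F}_K = \gamma \operatorname{id}$. One then runs the Hermitian evolution flow $H^{-1}\partial_t H = -(\mathrm{\Lambda}\mathbf{F}_H - \gamma \operatorname{id})$ (the analogue for Higgs bundles of Donaldson's heat equation) starting from any initial metric. Standard parabolic theory gives short-time existence; a maximum-principle estimate on $\operatorname{tr}(h)$, using the Higgs-bundle Weitzenb\"ock identity for $D_K' D_K'' + D_K'' D_K'$, yields long-time existence. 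The crux is convergence: if the flow does not converge, a standard contradiction argument of Uhlenbeck--Yau type produces a weakly holomorphic subsheaf that is $\phi$-invariant and destabilising, contradicting polystability. Polystability rather than stability is handled by decomposing into stable factors, on each of which the argument gives a HYM metric, and assembling these orthogonally.

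For uniqueness, let $K$ and $K'$ be two HYM metrics on $(\mathcal{F}, \phi)$, and set $h = K^{-1} K'$, a positive self-adjoint (with respect to either metric) endomorphism of $\mathcal{F}$. The difference of the two HYM equations, together with the identity $\mathrm{\Lambda}\mathbf{F}_{K'} - \mathrm{\Lambda}\mathbf{F}_K = $ an explicit second-order expression in $\log h$, gives a Bochner formula showing that $\log h$ is covariantly constant for the combined operator $D'' + D'$ and commutes with $\phi$. Hence $h$ decomposes $(\mathcal{F}, \phi)$ into its eigenspaces as an orthogonal direct sum of Higgs subbundles, each of which is polystable; on each stable factor Schur's lemma forces $h$ to be a scalar, so $K'$ differs from $K$ by a locally constant scalar on each stable summand. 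The main obstacle is the long-time existence and convergence of the flow in direction (ii); the analogue of Uhlenbeck--Yau's extraction of a destabilising subsheaf from a nonconverging flow has to be adapted to keep track of $\phi$-invariance throughout, which is where the bulk of the technical work resides.
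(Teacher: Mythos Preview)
The paper does not prove this theorem at all: it is explicitly introduced as ``the second of our black boxes,'' with the history and proof deferred to the literature (Simpson \cite{MR944577,MR1179076}, building on Donaldson and Uhlenbeck--Yau). So there is no in-paper argument to compare your proposal against.

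That said, your sketch is a faithful outline of the standard Simpson proof that the paper is pointing to. Direction (i) via the second fundamental form and the Chern--Weil inequality is exactly the classical argument, with the only Higgs-specific wrinkle being that $\phi$-invariance of $\mathcal{F}'$ is needed for the cross-term $[\theta,\overline\theta_K]$ to have the right sign. Direction (ii) is Simpson's adaptation of Donaldson's heat flow, and you have correctly identified the genuine difficulty: the extraction of a destabilising, $\phi$-invariant weakly holomorphic subsheaf when the flow fails to converge. Your uniqueness argument is also the standard one; note only that for a polystable bundle with several non-isomorphic stable summands the conclusion is indeed a separate scalar on each summand, so the phrase ``unique up to scalars'' in the statement should be read in that sense (and strictly ``up to a single scalar'' only in the stable case).

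In short: your proposal is correct in outline and matches the proof the paper is citing; the paper itself offers nothing beyond the reference.
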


\begin{corollary} \label{corollary:EquivalenceHiggsAndHarmonicBundles}
There is a (dg-)equivalence between the (dg-)category of harmonic bundles and that of polystable Higgs bundles with
$$
\operatorname{ch}_1(\mathcal{F}) . [\omega]^{n-1} = 0 = \operatorname{ch}_2(\mathcal{F}) . [\omega]^{n-2}.
$$
\end{corollary}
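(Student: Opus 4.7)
The plan mirrors the proof of Corollary \ref{corollary:EquivalenceFlatAndHarmonicBundles}. Fully-faithfulness of the forgetful functor $(E, D, D'') \mapsto (E, D'')$ was already verified in \S\ref{section:HarmonicBundles}, so only essential surjectivity remains: given a polystable Higgs bundle $(\mathcal{F}, \phi)$ on $X$ satisfying both topological conditions, I need to produce a Hermitian metric $K$ on the underlying smooth bundle $F$ for which the curvature $\mathbf{F}_K$ of the associated operator $D_K = D'_K + D''$ vanishes; the triple $(F, D_K, D'')$ will then be a harmonic bundle whose image under the forgetful functor is our original Higgs bundle.

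First, invoke Theorem \ref{theorem:PolystableHiggsBundlesAreHermitianYangMills} to obtain a Hermitian--Yang--Mills metric $K$, so that $\mathrm{\Lambda}\mathbf{F}_K = \gamma\operatorname{id}_F$ for some constant $\gamma$. The lemma preceding that theorem computes $\gamma$ as a fixed nonzero multiple of $\mu(\mathcal{F})$, and the hypothesis $\operatorname{ch}_1(\mathcal{F}).[\omega]^{n-1} = 0$ is precisely the vanishing of $\deg(\mathcal{F})$; hence $\mu(\mathcal{F}) = 0$, $\gamma = 0$, and in fact $\mathrm{\Lambda}\mathbf{F}_K = 0$.

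Next I would combine this with the Higgs-side Hodge--Riemann relation displayed at the start of \S\ref{section:FromHiggsBundlesToHarmonicBundles}, namely
$$\operatorname{Tr}(\mathbf{F}_K \wedge \mathbf{F}_K).[\omega]^{n-2} = C_1\|\mathbf{F}_K\|_{L^2}^2 + C_2\|\mathrm{\Lambda}\mathbf{F}_K\|_{L^2}^2,$$
to conclude $\mathbf{F}_K = 0$, provided the left-hand side vanishes. To establish the latter I would prove a Higgs-side analogue of Lemma \ref{lemma:VanishingSecondPseudoChernClass} by Chern--Weil transgression: the form $\Phi = \theta + \overline\theta_K$ is $\mathcal{O}$-linear (both $\theta$ and $\overline\theta_K$ are $0$-twisted connections), so the family $B_t = \nabla_K + t\Phi$, $t \in [0,1]$, with $\nabla_K = \partial_K + \overline\partial$ the Chern connection, consists of genuine $1$-$d$-connections on $F$. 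Each $\operatorname{Tr}(B_t^2 \wedge B_t^2)$ represents, up to a standard normalization, the characteristic class $\operatorname{ch}_2(\mathcal{F})$, and at $t = 1$ it equals $\operatorname{Tr}(\mathbf{F}_K \wedge \mathbf{F}_K)$ on the nose. Pairing with the closed class $[\omega]^{n-2}$ and invoking the second topological hypothesis $\operatorname{ch}_2(\mathcal{F}).[\omega]^{n-2} = 0$ gives the required vanishing.

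The genuine analytic content of the corollary sits in Theorem \ref{theorem:PolystableHiggsBundlesAreHermitianYangMills}; once that is granted, the proof is structurally parallel to Corollary \ref{corollary:EquivalenceFlatAndHarmonicBundles}, with the sole difference that the two Chern-class vanishings are now hypotheses rather than automatic consequences of the existence of a flat connection. The one step where I would want to be careful is the transgression --- checking that $\operatorname{Tr}(B_1^2 \wedge B_1^2) - \operatorname{Tr}(B_0^2 \wedge B_0^2)$ is $d$-exact so that, upon wedging with the closed form $\omega^{n-2}$ and integrating, it drops out --- but this is a standard Chern--Simons-type manipulation.
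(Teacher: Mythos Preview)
Your argument is correct and matches the paper's (largely implicit) proof: fully-faithfulness from \S\ref{section:HarmonicBundles}, then Theorem \ref{theorem:PolystableHiggsBundlesAreHermitianYangMills} plus the $\operatorname{ch}_1$-hypothesis to force $\gamma = 0$, then the Hodge--Riemann-type relation together with the $\operatorname{ch}_2$-hypothesis to kill $\mathbf{F}_K$. One small simplification: the transgression is unnecessary, because $D_K = \nabla_K + (\theta + \overline\theta_K)$ is itself a $1$-$d$-connection (you already observed that $\theta + \overline\theta_K$ is $\mathcal{O}$-linear), so $\operatorname{Tr}(\mathbf{F}_K \wedge \mathbf{F}_K)$ is \emph{on the nose} a Chern--Weil representative for a constant multiple of $\operatorname{ch}_2(\mathcal{F})$, and the hypothesis $\operatorname{ch}_2(\mathcal{F}).[\omega]^{n-2} = 0$ gives the vanishing directly---this is exactly why the paper calls the left-hand side ``the second Chern character'' without further comment.
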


Here the vanishing of the first Chern character along the K{\"a}hler class ensures that the constant $\gamma$ in the Hermitian--Yang--Mills equation is zero.

\medskip
Putting together Corollaries \ref{corollary:EquivalenceFlatAndHarmonicBundles} and \ref{corollary:EquivalenceHiggsAndHarmonicBundles}, we arrive at a conclusion that is very close to the full statement of the nonabelian Hodge theorem of \S\ref{section:CorrespondenceHiggsBundlesAndLocalSystems}.

\begin{corollary} \label{corollary:EquivalenceFlatAndHiggsBundles}
There is a (dg-)equivalence between the (dg-)category of semi\-simple flat bundles and that of polystable Higgs bundles with
$$
\operatorname{ch}_1(\mathcal{F}) . [\omega]^{n-1} = 0 = \operatorname{ch}_2(\mathcal{F}) . [\omega]^{n-2}.
$$
\end{corollary}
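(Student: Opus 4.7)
The plan is purely formal: chain together the two equivalences already established as Corollary \ref{corollary:EquivalenceFlatAndHarmonicBundles} and Corollary \ref{corollary:EquivalenceHiggsAndHarmonicBundles}. Both factor through the (dg-)category of harmonic bundles on $X$ via the two forgetful (dg-)functors $(E, D, D'') \mapsto (E, D)$ and $(E, D, D'') \mapsto (E, D'')$ introduced in \S\ref{section:HarmonicBundles}, so composing one of these equivalences with a quasi-inverse of the other delivers the asserted correspondence.

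Concretely, starting from a semisimple flat bundle $(E, D)$, I would apply Theorem \ref{theorem:SemisimpleFlatBundlesAreHarmonic} to produce a harmonic metric, and hence a harmonic bundle $(E, D, D'')$ as in Corollary \ref{corollary:EquivalenceFlatAndHarmonicBundles}; the Higgs forgetful functor then lands on a polystable Higgs bundle by Corollary \ref{corollary:EquivalenceHiggsAndHarmonicBundles}. In the opposite direction, starting from a polystable Higgs bundle with the stated Chern character vanishing, Theorem \ref{theorem:PolystableHiggsBundlesAreHermitianYangMills} provides a Hermitian--Yang--Mills metric and hence once again a harmonic bundle; the flat forgetful functor then produces the sought flat bundle, which is semisimple because it carries a harmonic metric (Theorem \ref{theorem:SemisimpleFlatBundlesAreHarmonic}).

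The single compatibility I would need to verify explicitly is that the round trip starting from a semisimple flat bundle actually lands in the advertised subcategory on the Higgs side, i.e., that the resulting polystable Higgs bundle automatically satisfies $\operatorname{ch}_1(\mathcal{F}).[\omega]^{n-1} = 0 = \operatorname{ch}_2(\mathcal{F}).[\omega]^{n-2}$. This is immediate from Example \ref{example:ChernCharactersAndClasses}: the underlying $C^\infty$ bundle $E$ already carries a flat $1$-$d$-connection, namely $D$ itself, so all of its Chern characters vanish in $H^{2\bullet}_\mathrm{dR}(X)$, in particular along any power of $[\omega]$. The dg-enhancement transports through the composition because each constituent is itself a dg-equivalence, with compatible hom-complexes realized on harmonic $E$-valued forms, as indicated at the end of \S\ref{section:HarmonicBundles}. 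There is no genuine analytic obstacle left to tackle in this final step: all the hard work has already been absorbed into the two black-box existence-and-uniqueness theorems for harmonic and Hermitian--Yang--Mills metrics.
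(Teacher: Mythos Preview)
Your proposal is correct and matches the paper's own approach exactly: the paper simply states that ``putting together Corollaries \ref{corollary:EquivalenceFlatAndHarmonicBundles} and \ref{corollary:EquivalenceHiggsAndHarmonicBundles}'' yields the result, and your write-up is a faithful unpacking of that composition through the (dg-)category of harmonic bundles. Your extra remark about why the Chern character conditions are automatically satisfied on the Higgs side is a welcome clarification that the paper leaves implicit.
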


\subsection{Extensions}
\label{section:Extensions}

A cursory look at the statements of the nonabelian Hodge theorem and of Corollary \ref{corollary:EquivalenceFlatAndHiggsBundles} makes it clear that the objects in the former are just (iterated) extensions of those in the latter.

However, Corollary \ref{corollary:EquivalenceFlatAndHiggsBundles} is the most general correspondence between local systems and Higgs bundles that can be realized through families of holomorphic vector bundles equipped with a flat $\lambda$-$d_h$-connection ---in particular, extensions of harmonic bundles are no longer harmonic.

The reason why we have fastidiously insisted that all of our categories be, in fact, differential graded categories is that this lifting of the equivalence in Corollary \ref{corollary:EquivalenceFlatAndHiggsBundles} to the corresponding categories of extensions follows from purely formal arguments. That is, one can prove that every time we have two equivalent dg-categories, their categories of extensions\footnote{We are purposefully avoiding here giving the definition of what extensions in a dg-category are. The interested reader can consult \S 3 of Simpson's \cite{MR1179076}.} are also equivalent in the dg-sense.

This finishes our sketch of the proof of Simpson's theorem. We encourage the reader wishing to fill in the details we left out to consult the original source \cite{MR1179076}.

\section{The hyperk\"ahler manifold}
\label{section:TheHyperkaehlerManifold}

The most important geometric consequence of the correspondence in \S\ref{section:CorrespondenceHiggsBundlesAndLocalSystems} is, arguably, the existence of a manifold with several modular intepretations at once.  This manifold can be equipped with non-isomorphic complex structures, each having an interpretation as a moduli space of geometric structures related to vector bundles.

\begin{theorem} \emph{(Hitchin, \cite{MR887284}; Simpson, \cite{MR944577})}
Let $X$ be a smooth, compact, connected complex curve of genus $g\geq2$.  For each positive integer $r$, there exists a noncompact singular variety $\mathcal{H}_X(r)$ which is a complex variety for three distinct complex structures $I$, $J$, and $K$ satisfying $I^2=J^2=K^2=IJK=-1$.  The respective complex varieties are denoted $\mathcal H^I_X(r)$, $\mathcal H^J_X(r)$, and $\mathcal H^K_X(r)$, and they have the following properties:
\begin{enumerate}
\item $\mathcal H^I_X(r)$ is the moduli space of semistable rank $r$ and degree $0$ Higgs vector bundles\footnote{On a Riemann surface, the second Chern character of any bundle vanishes automatically.}.
\item $\mathcal H^J_X(r)$ and $\mathcal H^K_X(r)$ are moduli spaces of flat ${\fontfamily{cmss}\selectfont\textbf{GL}}_r(\mathbb{C})$-connections on $X$, which are complex-analytically isomorphic to the moduli space of representations of $\pi_1 X$ into ${\fontfamily{cmss}\selectfont\textbf{GL}}_r(\mathbb{C})$.
\item The subset $\mathcal{H}^s_X(r)$ consisting of stable Higgs bundles (equivalently, irreducible representations of $\pi_1 X$) is a smooth, open, dense subvariety of $\mathcal H_X(r)$ with the structure of a Riemannian manifold.  
\item The metric on $\mathcal H^s_X(r)$ is complete, and $I$, $J$, and $K$ are covariantly constant and orthogonal with respect to the metric; in other words, $\mathcal{H}^s_X(r)$ is a complete hyperk\"ahler manifold.
\end{enumerate}
\end{theorem}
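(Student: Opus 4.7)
The plan is to realize $\mathcal{H}_X(r)$ as an infinite-dimensional hyperk\"ahler quotient in the spirit of Hitchin's original paper. Fix a $C^\infty$ complex vector bundle $E$ of rank $r$ and degree $0$ equipped with a Hermitian metric $K$, and consider the affine space
$$
\mathcal{A} = \bigl\{(\overline\partial_E,\theta) \,:\, \overline\partial_E\ \text{a holomorphic structure on } E,\ \theta \in \Omega^{1,0}(\operatorname{End} E)\bigr\}.
$$
At every point the tangent space is $\Omega^{0,1}(\operatorname{End} E) \oplus \Omega^{1,0}(\operatorname{End} E)$, and on this real vector space one defines three pointwise endomorphisms: $I$ acting as multiplication by $i$ on each summand (the complex structure inherited from $X$), $J$ swapping the summands via the Hermitian adjoint, $(\alpha,\beta) \mapsto (i\beta^\ast, -i\alpha^\ast)$, and $K = IJ$. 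Together with the flat $L^2$ metric built from $K$ and the K\"ahler form on $X$, these satisfy the quaternion relations and make $\mathcal{A}$ into a flat hyperk\"ahler manifold.

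The unitary gauge group $\mathcal{G}$ of $(E,K)$ acts on $\mathcal{A}$ preserving this structure, and a direct computation identifies its hyperk\"ahler moment map with Hitchin's self-duality equations: $\mu_I = \mathrm{\Lambda}\bigl(\mathbf{F}_K + [\theta,\theta^\ast]\bigr)$ and $\mu_J + i\mu_K = \overline\partial_E \theta$. I take $\mathcal{H}_X(r)$ to be the hyperk\"ahler quotient at level zero, and $\mathcal{H}^s_X(r)$ the open subset on which $\mathcal{G}$ acts with trivial stabilizer modulo the constant central scalars; standard slice arguments then show $\mathcal{H}^s_X(r)$ is smooth with parallel orthogonal complex structures $I$, $J$, $K$ satisfying the quaternion relations and a compatible Riemannian metric, giving item (3) and the hyperk\"ahler aspect of (4).

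Matching each complex structure to its moduli-theoretic description uses the results of \S\ref{section:ASketchOfProof}. In complex structure $I$, the equation $\overline\partial_E \theta = 0$ is the integrability of the Higgs field, and Theorem \ref{theorem:PolystableHiggsBundlesAreHermitianYangMills} --- noting that the second Chern character vanishes automatically on a curve --- identifies $\mathcal{G}$-orbits in $\mu^{-1}(0)$ with polystable degree zero Higgs bundles, after which S-equivalence on the semistable locus recovers the full GIT moduli space $\mathcal{H}^I_X(r)$. In complex structure $J$, the combination $D = \partial_K + \overline\partial_E + \theta + \theta^\ast$ of \S\ref{section:FromHiggsBundlesToHarmonicBundles} is a flat ${\fontfamily{cmss}\selectfont\textbf{GL}}_r(\mathbb{C})$-connection, so Corollary \ref{corollary:EquivalenceFlatAndHiggsBundles} together with the Riemann-Hilbert correspondence identifies $\mathcal{H}^J_X(r)$ complex-analytically with the moduli of representations of $\pi_1 X$. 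The case of $K = IJ$ is analogous, obtained essentially by rotating the Higgs field by $i$ before repeating the argument.

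The main obstacle is the completeness assertion in item (4). Since $\theta$ is unbounded, $\mathcal{H}^s_X(r)$ is noncompact and one must rule out geodesics running to infinity in finite time. The standard route is to exploit the Hitchin fibration $\mathcal{H}^s_X(r) \to \bigoplus_{i=1}^r H^0(X,(\Omega^1_X)^{\otimes i})$, sending $(\mathcal{F},\phi)$ to the coefficients of the characteristic polynomial of $\phi$: the base is a finite-dimensional vector space, the generic fiber is a compact abelian variety (a torsor over the Jacobian of the spectral curve), and Weitzenb\"ock-type a priori estimates from the self-duality equations control the geometry of $(\overline\partial_E,\theta)$ in terms of these characteristic invariants. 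Combining these facts, one shows that no Cauchy sequence of gauge equivalence classes can escape the moduli space. Carrying out this analytic argument rigorously is the technical heart of the proof; every other ingredient reduces cleanly to the nonabelian Hodge theorem of \S\ref{section:CorrespondenceHiggsBundlesAndLocalSystems} and the formal properties of hyperk\"ahler quotients.
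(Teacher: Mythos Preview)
The paper does not prove this theorem; it is stated with attribution to Hitchin \cite{MR887284} and Simpson \cite{MR944577} and then used as input for the rest of \S\ref{section:TheHyperkaehlerManifold}--\S\ref{section:ExampleCalculations}. So there is no ``paper's own proof'' to compare your proposal against. That said, your outline is essentially Hitchin's original argument: realize the moduli space as an infinite-dimensional hyperk\"ahler quotient of the configuration space $\mathcal{A}$ by the unitary gauge group, identify the three moment map equations with the self-duality equations, and then invoke the Hermitian--Yang--Mills / harmonic-metric existence theorems (here, Theorem~\ref{theorem:PolystableHiggsBundlesAreHermitianYangMills} and Corollary~\ref{corollary:EquivalenceFlatAndHiggsBundles}) to match the three complex structures with their moduli-theoretic interpretations. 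That part of your sketch is sound and faithful to the literature.

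Your treatment of completeness, however, is not the right mechanism. Properness of the Hitchin map $\mathbf{h}$ (which is what your spectral-curve paragraph is really invoking) is a statement about the Zariski or analytic topology of the moduli space, not about its Riemannian geometry; compactness of the generic Hitchin fibre does not by itself prevent a geodesic from reaching the boundary of the stable locus, or from escaping to infinity in the base direction, in finite time. The argument Hitchin actually runs is gauge-theoretic: along a Cauchy sequence of solutions the $L^2$ norm of $\theta$ stays bounded (it is, up to a constant, the moment map for the circle action), which via the self-duality equations bounds the $L^2$ norm of the curvature, and then Uhlenbeck's weak compactness theorem together with elliptic bootstrapping produces a convergent subsequence after gauge transformation. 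If you want to flag completeness as the analytic crux, that is fair, but the Hitchin fibration and Weitzenb\"ock estimates are not the ingredients that close the argument.
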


The moduli spaces in the theorem above are quotients of affine spaces of $\lambda$-connections by the notion of isomorphism for $\lambda$-connections established in \S\ref{section:Connections}. It is actually quite easy to see that $\mathcal H^I_X(r)$ and $\mathcal H^J_X(r)$ are not isomorphic as complex analytic spaces.  The space $\mathcal H^J_X(r)$ of flat connections is \emph{affine}, whereas $\mathcal H^I_X(r)$ contains a positive-dimensional \emph{projective} variety, namely the moduli variety of semistable vector bundles on $X$, embedded by $\mathcal E\mapsto(\mathcal E,0)$.  The singular locus is precisely $\mathcal H_X(r)\backslash\mathcal H^s_X(r)$, i.e., the locus of Higgs bundles that are semistable but not stable.

Fixing $r$, we may define a further sequence of noncompact varieties $\mathcal{H}_X(r,d)$, one for each $d\in\mathbb{Z}$, so that $\mathcal{H}^I_X(r,d)$ is the moduli space of semistable Higgs bundles of rank $r$ and fixed degree $d$, using the same stability condition as for the degree $0$ problem.  The spaces $\mathcal{H}^J_X(r,d)$ and $\mathcal{H}^K_X(r,d)$ are moduli spaces of representations of $\pi_1 X$ twisted by a root of unity, extending the moduli-theoretic nonabelian Hodge correspondence to arbitrary degree $d$ (see \cite{MR1990670,MR2453601,TH:13}, for instance).  The locus $\mathcal{H}^s_X(r,d)$ of stable Higgs bundles of degree $d$ is again smooth and complete hyperk\"ahler.  When $\gcd(r,d)=1$, semistable implies stable, and so $\mathcal{H}_X(r,d)=\mathcal{H}^s_X(r,d)$ is a smooth hyperk\"ahler manifold.  For now on, we will assume that $r$ and $d$ are such that $\gcd(r,d)=1$.  (In particular, $d\neq0$.)

Like all hyperk\"ahler spaces, $\mathcal H_X(r,d)$ comes with a sphere $S^2\cong\mathbb{P}^1$ of complex structures.  The complex structure $I$ lies at the pole $\zeta=0$ while $J$ and $K$ are equatorial, which we see by relating the sphere to the complex line of flat $\lambda$-connections via stereographic projection.  Since hyperk\"ahler implies Ricci flat, $\mathcal H^I_X(r,d)$ is a smooth Calabi-Yau manifold, albeit noncompact.

From now on, we will reserve the symbol $\mathcal M_X(r,d)$ for $\mathcal H^I_X(r,d)$.  Our emphasis on the moduli space of Higgs bundles over the other two modular interpretations will be justified by the presence of a natural $\mathbb{C}^\times$-action in \S\ref{section:CircleActionAndComplexVariationsOfHodgeStructure}.

\section{Properties of the moduli space of Higgs bundles on a curve}
\label{section:PropertiesOfModuliSpacesOfHiggsBundlesOnACurve}

Examining the varied and celebrated features of $\mathcal M_X(r,d)$ will expose for us a relationship between moduli of Higgs bundles and complex variations of Hodge structure.  As a warm up, we begin with the rank $1$ case.  We use the symbol $\omega_X$ for $\Omega^n(X)$, the top exterior power of the cotangent bundle.  Since $X$ is a curve from now on, Higgs fields for a bundle $\mathcal{E}$ are elements of $H^0(X,\sEnd\mathcal E\otimes\omega_X)$.  For economy, we sometimes use the juxtaposition $\mathcal E\mathcal F$ to mean the tensor product $\mathcal E\otimes\mathcal F$ of vector bundles $\mathcal E$ and $\mathcal F$. 

\subsection{The first example: the cotangent bundle of a Jacobian}
\label{subsection:CotangentBundleOfAJacobian}

Higgs line bundles over a curve garner little spotlight in the literature (although there are exceptions, e.g. \cite{MR2400111, MR2746119}).  While they are in some ways a toy model next to their higher-rank counterparts, Higgs line bundles betray many of the features for which Higgs bundles in general are famous.  For instance, the so-called Hitchin map on the moduli space of Higgs bundles is a proper map to an affine space.  Establishing this properness is nontrivial in general, but is on display for all to see in the rank 1 case.

Let $X$ be a nonsingular, connected curve of genus $g\geq1$.  A Higgs line bundle $(\mathcal L,\phi)$ consists of a holomorphic line bundle $\mathcal L$ and a section $\phi\in H^0(X,\sEnd(\mathcal{L})\otimes\omega_X)$, which is just a global holomorphic 1-form because $\sEnd \mathcal L \cong\mathcal O^\times$.  Holomorphic line bundles are stable vector bundles, and so the Higgs bundle $(\mathcal L,\phi)$ is stable in a rather trivial way for any $\mathcal L$ and $\phi$.  Therefore, the moduli space $\mathcal M_X(1,d)$ is a fibre bundle in which the base is the affine space $H^0(X,\omega_X)$ and the fibre is the $g$-dimensional complex torus $\mbox{Jac}^d(X)\cong T^g$.  Since $X$ has $g$ linearly-independent global holomorphic 1-forms, the total space of this fibration is $2g$-dimensional.  To see that this fibration is trivial, we appeal to a standard fact from deformation theory to conclude that the tangent space to the moduli space of line bundles of degree $d$ on $X$ at a line bundle $[\mathcal L]$ is\begin{eqnarray}T_{[\mathcal L]}(\mbox{Jac}^d(X))\;=\;H^1(X,\sEnd \mathcal L)\;\cong\;H^0(X,\sEnd \mathcal L\otimes\omega_X)^*,\nonumber\end{eqnarray}\noindent where the last isomorphism arises from Serre duality.  In other words, a cotangent vector to the point $[\mathcal L]$ is a Higgs field for $\mathcal L$.  Subsequently, the cotangent bundle to the moduli space of line bundles on $X$ is contained in the moduli space of Higgs line bundles on $X$.  It is easy to see that the containment is in both directions, and so we have\begin{eqnarray}\mathcal M_X(1,d) & = &  T^*\mbox{Jac}^d(X)\nonumber.\end{eqnarray}

Since the cotangent bundle of the complex torus $\mbox{Jac}^d(X)$ is trivial, so too is the moduli space of Higgs line bundles, considered as a vector bundle over $\mbox{Jac}^d(X)$.  It also follows immediately that $\mathcal M_X(1,d)$ is symplectic.

The group $\mathbb{C}^\times$ acts on $\mathcal M_X(1,d)$ by multiplication on $\phi$:\begin{eqnarray}(\mathcal L,\phi) & \stackrel{\mu}{\mapsto} & (\mathcal L,\mu\phi).\nonumber\end{eqnarray}\noindent  The only fixed points of the action are $(\mathcal L,0)$, where $0$ is the everywhere-vanishing 1-form on $X$.  In other words, the action fixes the fibre over zero in $\mathcal M_X(1,d)\cong T^g\times\mathbb{C}^g$.  It is significant that the fibre over zero consists exactly of the rank 1 complex variations of Hodge structure on $X$ (although the identification might seem merely coincidental at the moment).

Finally, the projection $\mathbf h:\mathcal M_X(1,d)\longrightarrow\mathbb{C}^g$ that forgets the line bundle but keeps the 1-form is obviously proper.

The following table indicates the relevant properties of $\mathcal M_X(1,d)$ and how they generalize to $\mathcal M_X(r,d)$.  In it, we use $\mathcal U_X(r,d)$ for the moduli space of semistable vector bundles of rank $r$ and degree $d$ on $X$, and the shorthand CVsHS for ``complex variations of Hodge structure''.

\vspace{10pt}
\footnotesize
\noindent
\begin{tabular*}{\textwidth}{@{\extracolsep{\fill}}ccc}
\toprule
& $\mathcal M_X(1,d)$ & $\mathcal M_X(r,d)$ \\ \midrule
Hitchin base & $H^0(X,\omega_X)$ & $\bigoplus_{i=1}^rH^0(X,\omega_X^{\otimes i})$ \\[2ex]
Fibres & Jac$^d(X)$ & generically, Jac$^{d'}(X_\rho)$ \\[2ex]
\begin{tabular}{c}Relation to \\ stable bundles\end{tabular} & $T^*\mathcal U_X(1,d)=\mathcal M_X(1,d)$ & $T^*\mathcal U_X(r,d)\subset\mathcal M_X(r,d)$ \\[3ex]
\begin{tabular}{c}Fixed points \\ of $\mathbb{C}^\times$-action \\ (CVsHS) \end{tabular} & $\mathbf h^{-1}(0)$ & subvarieties of $\mathbf h^{-1}(0)$ \\ \bottomrule
\end{tabular*}
\normalsize
\vspace{10pt}

In the remaining two sections, we will clarify the second column, including the number $d'$, the role of the additional curve $X_\rho$, and the way in which complex variations of Hodge structure enter the picture.

\subsection{The Hitchin map, spectral curves, and other features of $\mathcal M_X(r,d)$}
\label{subsection:HitchinMapSpectralCurvesAndOtherFeatures}

Now, we take $X$ to be a nonsingular, connected curve of genus $g\geq2$.  Consider the \emph{Hitchin map}\begin{eqnarray}\mathbf h\,:\,\mathcal M_X(r,d)\longrightarrow\mathbf B_r:=\displaystyle\bigoplus_{i=1}^rH^0(X,\omega_X^{\otimes i})\nonumber\end{eqnarray}\noindent defined by sending a class $[(\mathcal E,\phi)]$ to the $r$-tuple of coefficients of the characteristic polynomial, $\mbox{char}(\phi)$.  The affine space $\mathbf B_r$ is commonly referred to as the \emph{Hitchin base}.  While $\mathcal M_X(r,d)$ is noncompact for all $r$ and $d$, $\mathbf h$ is nevertheless proper.  This was established for $\mathcal M_X(2,d)$ by Hitchin in \cite{MR887284}.  For arbitrary rank $r$ on curves, the result was obtained by Nitsure in \cite{MR1085642}.  Finally, for arbitrary smooth projective varieties $X$, the result is due to Simpson in \cite{MR1320603}.

Each fibre of the Hitchin map has a modular interpretation, namely the moduli space of Higgs bundles with fixed $\mbox{char}(\phi)$. By the properness property, these moduli spaces are compact.

In order to better understand the geometry of the Hitchin map, it is advantageous to consider the \emph{spectral curve} associated to a Higgs bundle.  To define it, we use $S$ for the total space of $\omega_X$, $z$ for a coordinate on $S$, $\pi$ for the natural projection from $S$ onto $X$, and $\tau$ for the tautological section of the pullback $\pi^*\omega_X$.  

\begin{definition}  The \emph{spectral curve} of $\rho=(\rho_1,\dots,\rho_r)\in\mathbf B_r$ is the $1$-dimensional subvariety $X_\rho\subset S$ cut out by the equation\begin{eqnarray}\tau^r(z) & = & \rho_1(\pi(z))\tau^{r-1}(z)+\cdots+\rho_r(\pi(z)).\nonumber\end{eqnarray}\end{definition}

\noindent For a generic choice of $\rho$, $X_\rho$ is a nonsingular curve that is also a branched, $r:1$ cover of $X$. Now, take a line bundle $\mathcal L$ on such an $X_\rho$.  The direct image $(\pi|_{X_\rho})_*\mathcal L$ is a rank $r$ vector bundle $\mathcal E$ on $X$ (one only needs to check freeness at the branch points), and the pushforward of the multiplication map $\mathcal L\stackrel{\tau}{\longrightarrow}\pi^*\omega_X\otimes L$ is an $\mathcal O_X$-linear, $\omega_X$-valued endomorphism of $\mathcal E$ ---in other words, a Higgs field $\phi$ for the bundle $\mathcal E=(\pi|_{X_\rho})_*\mathcal L$.

Conversely, a Higgs bundle $(\mathcal E,\phi)$ determines a point $\rho\in\mathbf B_r$.  In turn, this point determines a spectral curve $X_\rho$, and the eigenspaces of $\phi$ form a line bundle $\mathcal L$ on $X_\rho$.   The branch points are the points $x\in X$ where $\phi_x$ has repeated eigenvalues.  

Rigorous developments of the correspondence between Higgs bundles on a curve and line bundles on a spectral curve can be found in \cite{MR885778,MR998478,MR1397273,MR1397059}.  For our purposes, we simply want a hint as to what kind of compact varieties the fibres of $\mathbf h$ are.  According to the spectral correspondence, the generic fibre $\mathbf h^{-1}(\rho)$ is the Jacobian of the spectral curve $X_\rho$.  The degree $d'$ of the line bundles in the Jacobian can be computed using the following argument of Hitchin \cite{MR1723384}.  Let $Y$ be a nonsingular curve and fix an ample line bundle $\mathcal J\rightarrow Y$ with $\deg\mathcal J=1$. For any vector bundle $\mathcal V\rightarrow Y$, there is a sufficiently large integer $n$ for which $H^0(Y,\mathcal V\otimes(\mathcal J^*)^{\otimes n})=0$.  Applying this to $\mathcal V^*\otimes\omega_Y$, it follows from Serre duality that\begin{eqnarray}H^1(Y,\mathcal V\otimes\mathcal J^{\otimes n})^*\;\cong\;H^0(Y,\mathcal V^*\otimes(\mathcal J^*)^{\otimes n}\otimes\omega_Y)\;=\;0.\nonumber\end{eqnarray}\noindent Thus, for a generic spectral curve $Y=X_\rho$ and a line bundle $\mathcal L$ on it, we have\begin{eqnarray}H^1(X_\rho,\mathcal L\otimes(\pi|_{X_\rho})^*\mathcal J^n)\;=\;0\nonumber\end{eqnarray}\noindent and\begin{eqnarray}H^1(X,\mathcal E\otimes\mathcal J^n)=0\nonumber\end{eqnarray}\noindent for $n$ sufficiently large, where $\mathcal E=(\pi|_{X_\rho})_*L$.   Because $\deg\mathcal J=1$, Riemann-Roch gives us\begin{eqnarray} h^0(X_\rho,\mathcal L\otimes(\pi|_{X_\rho})^*\mathcal J^n)=\deg\mathcal L+rn+(1-\tilde g)\nonumber\end{eqnarray}\noindent and\begin{eqnarray} h^0(X,(\pi|_{X_\rho})_*\mathcal L\otimes\mathcal J^n)=\deg\mathcal E+rn+r(1-g).\nonumber\end{eqnarray}\noindent It is a consequence of properties of the direct image functor that\begin{eqnarray}h^0(X_\rho,\mathcal L\otimes(\pi|_{X_\rho})^*\mathcal J^n) & = & h^0(X,(\pi|_{X_\rho})_*\mathcal L\otimes\mathcal J^n),\nonumber\end{eqnarray}\noindent and so\begin{eqnarray}d'\;=\;d-(1-\tilde g)+r(1-g),\nonumber\end{eqnarray}\noindent where $\tilde g$ is the genus of $X_\rho$, which can be computed via Riemann-Hurwitz using the branch divisor, or via the adjunction formula.

One fundamental piece of information we are missing is the dimension of $\mathcal M_X(r,d)$, and in particular how this dimension measures against that of $\mathbf B_r$.  For $\mathbf B_r$, note that Riemann-Roch and Serre duality tell us that\begin{eqnarray}h^0(X,\omega_X^{\otimes i})-h^0(X,\omega_X^{\otimes i-1}) & = & (2i-1)(g-1)\nonumber\end{eqnarray}\noindent for each $i\geq1$.   For $i\geq2$, $\omega_X^{\otimes i-1}$ has negative degree, and consequently is without global holomorphic sections.  The identity therefore reduces to\begin{eqnarray}h^0(X,\omega_X^{\otimes i}) & = & (2i-1)(g-1).\nonumber\end{eqnarray}\noindent  It follows that\begin{eqnarray}\dim\mathbf B_r & = & g+(g-1)\sum_{i=2}^r(2i-1)\nonumber\\ & = & g+2(g-1)\left(\frac{r(r+1)}{2}-1\right)-(r-1)\nonumber\\ & = & g+(r^2-1)(g-1)\nonumber\\ & = & 1+r^2(g-1).\nonumber\end{eqnarray}  

To compute the dimension of the total space of the Hitchin fibration, note that the tangent space to a point $[\mathcal E]\in\mathcal U_X(r,d)$ is\begin{eqnarray}T_{[\mathcal E]}(\mathcal U_X(r,d)) = H^1(X,\sEnd \mathcal E) \cong H^0(X,\sEnd\mathcal E\otimes\omega_X)^*,\nonumber\end{eqnarray}\noindent and so we have that the cotangent bundle to the moduli space of stable bundles is contained inside the moduli space of Higgs bundles, as was the case for Higgs line bundles.  The containment, however, is only in one direction this time.  Slope stability for Higgs bundles is a weaker condition that slope stability for bundles alone.  In particular, there are stable Higgs bundles $(\mathcal E,\phi)$ for which the underlying vector bundle $\mathcal E$ is unstable, and so the natural forgetful map $\mathcal M_X(r,d)\longrightarrow\mathcal U_X(r,d)$ is not globally defined in general.  However, the containment is open dense, and the symplectic form on $T^*\mathcal U_X(r,d)$ can be extended to one on $\mathcal M_X(r,d)$.  The complex structure $I$ on $\mathcal H_X(r,d)$ is also an extension of the one on $T^*\mathcal U_X(r,d)$.

Note that $H^0(X,\sEnd \mathcal E)\cong\mathbb{C}$ for a stable vector bundle $\mathcal E$ and that $\deg\sEnd \mathcal E = \deg(\mathcal E\otimes \mathcal E^*)=0$, and so the Riemann-Roch theorem tells us\begin{eqnarray}1-h^1(X,\sEnd \mathcal E) & = & r^2(1-g)\nonumber\end{eqnarray}\noindent and so\begin{eqnarray}\dim\mathcal U_X(r,d)\,=\,h^1(X,\sEnd\mathcal  E)\,=\,1+r^2(g-1),\nonumber\end{eqnarray}\noindent from which we conclude\begin{eqnarray}\dim\mathcal M_X(r,d)\,=\,2+2r^2(g-1).\nonumber\end{eqnarray}\noindent Notice that this is exactly twice the dimension of $\mathbf B_r$.

Juxtaposing all of this information, we observe that the complex symplectic manifold $\mathcal M_X(r,d)$ is a torus fibration over an affine half-dimensional base $\mathbf B_r$, and therefore is a Lagrangian fibration.  According to the theory of algebraic integrability, this makes $\mathcal M_X(r,d)$ into the total space of an algebraically completely integrable Hamiltonian system, referred to as \emph{the Hitchin system}. The first integrals, which pair-wise commute under the Poisson bracket coming from the symplectic form, are the components of the Hitchin map $\mathbf h$; that is, they are the invariant polynomials associated to $\phi$.  This beautiful facet of the theory of Higgs bundles was brought to light by Hitchin in \cite{MR885778}. Remarkably, many known integrable systems fit into the framework of the Hitchin system. The pushforward of a linear flow on the torus $\mbox{Jac}^d(X_\rho)$ gives rise to dynamics over $X$ in which the time evolution of $\phi$ is governed by a Lax pair equation, as explained in Hitchin's article in \cite{MR1723384}.

Because it is Calabi-Yau and because of its structure as a Lagrangian torus fibration, the moduli space of Higgs bundles on a curve is of interest from the point of view of Strominger-Yau-Zaslow mirror symmetry \cite{MR1429831}.  In particular, if we apply hyperk\"ahler rotation and shift to the moduli space of flat connections, the moduli space becomes a special Lagrangian fibration.  Far-reaching work on this aspect of moduli spaces of flat connections and Higgs bundles was carried out by Hausel and Thaddeus \cite{MR1990670}, and further developed in \cite{MR2306566,MR2354922,MR2648685,MR2957305} etc.  Mirror symmetry involving Higgs bundle moduli spaces is most interesting when we consider principal $G$-Higgs bundles for complex Lie groups other than the general linear group (for example, for $G={\fontfamily{cmss}\selectfont\textbf{SL}}_r(\mathbb{C}),\;{\fontfamily{cmss}\selectfont\textbf{PGL}}_r(\mathbb{C})$), as the appropriate mirror for the moduli space of $G$-Higgs bundles is the moduli space of $^{\fontfamily{cmss}\selectfont\textbf{L}}G$-Higgs bundles, where $^{\fontfamily{cmss}\selectfont\textbf{L}}G$ is the Langlands dual group.

\section{The circle action and complex variations of Hodge structure}
\label{section:CircleActionAndComplexVariationsOfHodgeStructure}

A major problem in the study of moduli spaces is to ascertain their topologies.  Hausel calls this the ``first approximation'' to a moduli problem \cite{TH:13}. For us, studying the topology means eliciting a sequence of numerical topological invariants, namely the Betti numbers.  In this section, we aim to show how the global topology of the moduli space of Higgs bundles is determined by a single fibre of the Hitchin map, and in particular by special Higgs bundles on this fibre that correspond to complex variations of Hodge structure.

When $\mathbb{C}^\times$ is allowed to act by multiplication on the twistor line, the fixed points are $\zeta=0,\infty$, where $0$ corresponds to $\mathcal M_X(r,d)$.  If $(\mathcal E,\phi)$ is a semistable Higgs bundle representing a class in $\mathcal M_X(r,d)$, then multiplication by $\mu$ sends this class to a class with representative Higgs bundle $\mu\cdot(\mathcal E,\phi)=(\mathcal E,\mu\cdot\phi)$, as defined in \S\ref{section:Connections}

For our purposes, we will restrict attention to the action of a compact subgroup, namely a circle $S^1=\{e^{i\theta}\in\mathbb{C}^\times:\,\theta\in\mathbb{R}\}$.  This allows us to bring in the tools of Morse-Bott theory, as was done by Hitchin in \cite{MR887284} and Gothen in \cite{MR1298999,PBG:95}.  The corresponding action is rotation of the Higgs field through $\theta$:\begin{eqnarray}\theta\cdot(\mathcal  E,\phi) & = & (\mathcal E,e^{i\theta}\phi).\nonumber\end{eqnarray}

Now we ask: which conditions must a Higgs bundle satisfy in order to be fixed under the $S^1$-action?  Of course, we are not asking for solutions to $\theta\cdot(\mathcal E,\phi)=(\mathcal E,\phi)$ in the strict sense.  Strictly speaking, the only solutions are pairs $(\mathcal E,0)$.  In other words, the solution set is exactly the moduli space of stable rank $r$, degree $d$ bundles on $X$.  In truth, there are other solutions, because we are working at the level of moduli, meaning that we find solutions of $e^{i\theta}\cdot(\mathcal E,\phi)=(\mathcal E,\phi)$ up to the notion of isomorphism of $\lambda$-connections supplied in \S\ref{section:Connections}.  More precisely:

\begin{definition} A point $[(\mathcal E,\phi)]\in\mathcal M_X(r,d)$ is a \emph{fixed point} of the $S^1$-action on $\mathcal M_X(r,d)$ if and only if there exists, for each $\theta\in\mathbb{R}$, an automorphism $\psi_\theta$ of $\mathcal E$ so that $\psi_\theta\phi\psi^{-1}_\theta=e^{i\theta}\phi$.\end{definition}

To achieve a complete characterization of the fixed points, we follow \cite{MR887284,MR1159261,PBG:95}.  Suppose that $(\mathcal E,\phi)$ is a fixed point.  Denote by $\Psi$ the infinitesimal generator of the associated one-parameter family of automorphisms; that is,\begin{eqnarray}\Psi & := & \left.\frac{d(\psi_\theta)}{d\theta}\right|_{\theta=0},\nonumber\end{eqnarray}\noindent and $\Psi$ is an endomorphism of $\mathcal E$.   Differentiating both sides of the fixed point equation gives us\begin{eqnarray}\frac{d(\psi_\theta)}{d\theta}\phi\psi^{-1}_\theta-\psi_\theta\phi\psi_\theta^{-1}\frac{d(\psi_\theta)}{d\theta}\psi_\theta^{-1} & = & ie^{i\theta}\phi\nonumber,\end{eqnarray}\noindent and evaluating both sides of this at $\theta=0$ yields\begin{eqnarray}\Psi\phi 1_\mathcal E-1_\mathcal E\phi 1_\mathcal E\Psi1_\mathcal E & = & i\phi\nonumber,\end{eqnarray}\noindent which is just a commutator relation, specifically\begin{eqnarray}\left[\Psi,\phi\right] & = & i\phi\label{CommRel}.\end{eqnarray}\noindent

Recall that the Koszul-Malgrange integrability theorem equates the holomorphic structure $\mathcal E$ on $E$ to a $\mathbb{C}$-linear operator\begin{eqnarray}\overline\partial_E\,:E\longrightarrow E\otimes_{\mathcal A_X^0}\mathcal A_X^{0,1}\nonumber\end{eqnarray}\noindent satisfying the Leibniz property and the integrability condition $\overline\partial_E^2=0$.  If $(\mathcal E,\Phi)$ is a fixed point, then we must also have\begin{eqnarray}\psi_\theta^{-1}\overline\partial_E\psi_\theta & = & \overline\partial_E\label{FixedPartial}\end{eqnarray}\noindent for all $\theta\in\mathbb{R}$.  The automorphism $\psi_\theta$ is not $1_\mathcal E$ whenever $\theta$ is not an integer multiple of $2\pi$.  Consequently, in order for equation \eqref{FixedPartial} to hold, it must be that the holomorphic structure on $\mathcal E$ is reducible.  Specifically, $\mathcal E$ must decompose globally into a direct sum of eigenspace bundles of the endomorphism $\Psi$.  Call these eigenspace bundles $\Lambda_1,\dots,\Lambda_n$.  The eigenvalues of $\Psi$ are global holomorphic sections of $\omega_X$, and we call them $\lambda_1,\dots,\lambda_n$, respectively.  Note that the only condition on $n$ is that $n\leq r$; it is possible to have fixed points for $n=1$, in which case $\Lambda_1=\mathcal E$, although $\phi=0$ in this case.  At the other extreme, $n=r$ implies that each $\Lambda_j$ is a line bundle on $X$.

Pick a bundle $\Lambda_j$ and let both sides of the commutator relation \eqref{CommRel} act on it.  The result is\begin{eqnarray} \Psi\phi(\Lambda_j) - \lambda_j\phi(\Lambda_j) & = & i\phi(\Lambda_j),\nonumber\end{eqnarray}\noindent which rearranges into\begin{eqnarray}\Psi(\phi\Lambda_j) & = & (\lambda_j+i)(\phi\Lambda_j).\nonumber\end{eqnarray}\noindent In other words, $\phi(\Lambda_j)$ is contained in an eigenspace bundle of $\Psi$ ---specifically, the one corresponding to eigenvalue $\lambda_j+i$.

We therefore have the following characterization: a rank $r$ Higgs bundle $(\mathcal E,\phi)$ is a fixed point of the $S^1$-action if and only if there exist natural numbers $1=m_0\leq m_1\leq\cdots\leq m_k=n\leq r$ such that $E=\bigoplus_{j=1}^n\Lambda_j$ and\begin{eqnarray}\Lambda_1\stackrel{\phi}{\longrightarrow}\cdots\stackrel{\phi}{\longrightarrow}\Lambda_{m_1}\otimes(\omega_X)^{\otimes(m_1-1)}\stackrel{\phi}{\longrightarrow} &  0 & \nonumber\\ & \vdots & \nonumber\\\Lambda_{m_{k}+1}\stackrel{\phi}{\longrightarrow}\cdots\stackrel{\phi}{\longrightarrow}\Lambda_{n}\otimes(\omega_X)^{\otimes(n-(m_{k-1}+1))}\stackrel{\phi}{\longrightarrow} & 0 &,\nonumber\end{eqnarray}\noindent where the only zero maps are the ones at the ends of the sequences, and the eigenvalue of $\Lambda_{m_j+p}$ is $\lambda_{m_j+p}=\lambda_{m_j}+pi$ for each $p=0,1,\dots,m_{j+1}-m_j-1$.

In other words, the action of the Higgs field $\phi$ is described by several sequences of bundles.  Each of these sequences gives rise to a sub-bundle $\bigoplus_{m_{j-1}+1}^{m_j}\Lambda_j$ of $\mathcal E$ that is invariant under $\phi$ (up to twisting by powers of $\omega_X$).  It is easy to show that, for numerical reasons, a stable Higgs bundle $(\mathcal E,\phi)$ cannot be a direct sum of two or more $\phi$-invariant subbundles.  Consequently, a rank $r$ stable Higgs bundle $(\mathcal E,\phi)$ is a fixed point of the $S^1$-action if and only if there exists a natural number $n\leq r$ such that $\mathcal E=\bigoplus_{j=1}^n\Lambda_j$ and\begin{eqnarray}\Lambda_1\stackrel{\phi}{\longrightarrow}\cdots\stackrel{\phi}{\longrightarrow}\Lambda_{n}\otimes(\omega_X)^{\otimes(n-1)}\stackrel{\phi}{\longrightarrow}0,\nonumber\end{eqnarray}\noindent where the only zero map is the one at the end of the sequence.   (We have not repeated the statement regarding the eigenvalues because it is unnecessary for characterizing the fixed points.)

From this discussion, we highlight two important facts:\begin{enumerate}\item The fixed points in $\mathcal M_X(r,d)$ of $S^1$ are precisely the Higgs bundles that are complex variations of Hodge structure.  This connection was observed by Simpson \cite{MR944577,MR1159261}.\item  A fixed point is necessarily nilpotent, implying that each and every fixed point is a point in the Hitchin fibre $\mathbf h^{-1}(0)$.\end{enumerate}

This fibre is referred to as the \emph{nilpotent cone}, and we shall use the notation $\mbox{Nilp}_X(r,d):=\mathbf h^{-1}(0)\subset\mathcal M_X(r,d)$ from now on.

To establish a link between the data in $\mbox{Nilp}_X(r,d)$ and the topology of $\mathcal M_X(r,d)$, we point out the presence of a Morse--Bott function\begin{eqnarray}f\,:\,\mathcal M_X(r,d)\longrightarrow\mathbb{R}_{\geq0}\nonumber\end{eqnarray}\noindent whose value is a scalar multiple of the norm squared of $\phi$, using the norm coming from the K\"ahler metric associated to the complex structure $I$.  This function is a proper moment map for the $S^1$-action and satisfies the conditions for being a perfect, nondegenerate Morse-Bott function, as established in \S7 of Hitchin's paper \cite{MR887284}.  

We will not have to study the function directly, at least for our purposes.  We simply note the following substantial consequence of its existence: the downward gradient flow of $f$ coincides with $\mbox{Nilp}_X(r,d)$, which is a deformation retract of $\mathcal M_X(r,d)$ (as in \S4 of \cite{TH:98}).

If $(\mathcal E,\phi)$ is a fixed point, then let $(r_1,\dots,r_n)$ be the $n$-tuple of ranks of the (ordered) $\Lambda_j$ bundles.  The vector $(r_1,\dots,r_n)$ is referred to as the \emph{type} of the fixed point.  For example, the moduli space $\mathcal U_X(r,d)$ of stable bundles of rank $r$ and degree $d$ is the locus of type $(r)$ fixed points in $\mbox{Nilp}_X(r,d)$.

The primary tool for understanding the topology of $\mathcal M_X(r,d)$ is Hitchin's Morse-Bott localization procedure, which appeared in its earliest form in \S7 of \cite{MR887284}. 

\begin{theorem} \emph{(Hitchin, \cite{MR887284})}  The Betti numbers for the ordinary rational cohomology of $\mathcal M_X(r,d)$ are generated by \begin{eqnarray}\mathcal P_y(r,d) & = & \sum y^{\beta(\mathcal N)}\mathcal P_y(\mathcal N),\label{Local1}\end{eqnarray}\noindent where the sum is taken over connected components $\mathcal N$ of the fixed point set of $S^1\circlearrowright\mathcal M_X(r,d)$, and where $\beta(\mathcal N)$ is the number of negative eigenvalues of $\emph{\mbox{Hess}}(f)$ at any fixed point in $\mathcal N$.\end{theorem}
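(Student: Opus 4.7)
The plan is to apply Morse--Bott theory to the proper function $f\colon \mathcal M_X(r,d)\to\mathbb R_{\geq 0}$ given by (a scalar multiple of) $\|\phi\|^2$, which was introduced just before the theorem statement. The principal infrastructure is already in place: $f$ is a proper moment map for the $S^1$-action, its critical set coincides with the fixed-point locus classified above as chains $\Lambda_1\xrightarrow{\phi}\cdots\xrightarrow{\phi}\Lambda_n\otimes\omega_X^{\otimes(n-1)}$, and its downward gradient flow deformation retracts $\mathcal M_X(r,d)$ onto $\mathrm{Nilp}_X(r,d)$. The coprimality hypothesis $\gcd(r,d)=1$ makes $\mathcal M_X(r,d)$ a smooth K\"ahler manifold, so Morse--Bott theory applies directly. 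The fixed locus decomposes into finitely many connected components $\mathcal N$, indexed by types $(r_1,\ldots,r_n)$ (partitions of $r$) and the finitely many compatible degree splittings allowed by stability.

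The first step is to verify Morse--Bott nondegeneracy: at $p\in \mathcal N$, the Hessian $\mathrm{Hess}(f)_p$ is conjugate to the quadratic form on $T_p\mathcal M_X(r,d)$ given by the $S^1$-weights, with kernel equal to the zero-weight subspace $T_p\mathcal N$. The negative eigenspaces assemble into a subbundle $\nu^{-}\mathcal N$ of constant rank $\beta(\mathcal N)$. Applying classical Morse--Bott handle attachment on each compact sublevel set $f^{-1}([0,c])$ (compact by properness of $f$) and taking the direct limit as $c\to\infty$, one obtains the Morse--Bott inequality
$$\mathcal P_y(\mathcal M_X(r,d)) \preceq \sum_{\mathcal N} y^{\beta(\mathcal N)}\,\mathcal P_y(\mathcal N),$$
where $\preceq$ denotes coefficient-wise domination in $\mathbb Z[y]$. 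The equality stated in the theorem is equivalent to the vanishing of the ``defect'' polynomial $Q(y)$ determined by $\sum_{\mathcal N} y^{\beta(\mathcal N)}\mathcal P_y(\mathcal N) - \mathcal P_y(\mathcal M_X(r,d)) = (1+y)\,Q(y)$.

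The main obstacle is showing $Q(y)\equiv 0$, i.e., that $f$ is a \emph{perfect} Morse--Bott function. I would invoke the Atiyah--Bott--Kirwan principle: a moment map for a compact group action on a (possibly noncompact) K\"ahler manifold is equivariantly perfect, and the even-index property---which holds here because the $S^1$-action preserves the complex structure $I$, so that $\nu^{-}\mathcal N$ is a complex subbundle and thus $\beta(\mathcal N)\in 2\mathbb Z$---permits perfectness to descend from equivariant to ordinary cohomology. The noncompactness of $\mathcal M_X(r,d)$ is circumvented by properness of $f$: the sublevel sets $f^{-1}([0,c])$ are compact, and once $c$ exceeds the (finitely many) critical values of $f$, the inclusion $f^{-1}([0,c])\hookrightarrow \mathcal M_X(r,d)$ induces an isomorphism on cohomology. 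Standard compact-case perfectness applied to such a sublevel set, together with the stabilization argument, yields the desired equality.
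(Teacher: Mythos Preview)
The paper does not supply its own proof of this theorem; it is stated with attribution to Hitchin, and the surrounding text explicitly defers the verification that $f$ is a perfect, nondegenerate Morse--Bott function to \S7 of \cite{MR887284}. There is therefore no in-paper argument to compare your proposal against beyond that one-line citation.

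Your outline is the standard strategy and is essentially what Hitchin does. One correction, though: the step ``even indices permit perfectness to descend from equivariant to ordinary cohomology'' is not quite right. Evenness of $\beta(\mathcal N)$ alone does not force the Morse--Bott long exact sequences to split when the critical submanifolds carry odd-degree cohomology, and here they do (Jacobians, symmetric products of $X$, $\mathcal U_X(r,d)$). The clean mechanism is the holomorphic $\mathbb C^\times$-action itself: its Bia\l ynicki--Birula decomposition stratifies $\mathcal M_X(r,d)$ by Zariski-locally-trivial affine bundles over the components $\mathcal N$, and properness of $f$ (equivalently, the existence of the limits $\lim_{\mu\to 0}\mu\cdot(\mathcal E,\phi)$ inside $\mathcal M_X(r,d)$) ensures the strata exhaust the space. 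That gives the additive Poincar\'e-polynomial formula directly, without passing through equivariant cohomology. If you prefer to stay on the Kirwan route, the missing hypothesis after equivariant perfectness is \emph{equivariant formality} of $\mathcal M_X(r,d)$ (automatic for Hamiltonian torus actions on compact symplectic manifolds, and extended here via the properness of $f$), not merely evenness of the indices.
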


This identity asserts that the topological data of $\mathcal M_X(r,d)$ are controlled by the subvarieties $\mathcal N\subset\mbox{Nilp}_X(r,d)$ which are moduli spaces of complex variations of Hodge structure, by the correspondence with fixed points of the $S^1$-action.

The number $\beta(\mathcal N)$ in the identity \eqref{Local1} is the \emph{Morse index} of standard Morse theory.  It counts the degrees of freedom for the downward flow out of a component $\mathcal N$.  The main difference between standard Morse theory for compact spaces and the Morse-Bott theory used for the noncompact space $\mathcal M_X(r,d)$ is that the fixed point set is not necessarily discrete.  Indeed, the terminal component of the flow, which is also the component with index $0$, is $\mathcal U_X(r,d)$, which is a connected subvariety of $\mbox{Nilp}_X(r,d)$ of positive dimension.  This is generally true for all components, and the implication is that the calculation of the right-hand side of \eqref{Local1} is highly nontrivial, even for low rank.

While determining $\mathcal P_y(\mathcal N)$ is a major obstacle to calculating $\mathcal P_y(r,d)$, the Morse index submits to direct calculation without too much trouble.\footnote{The Morse index is essentially deformation theoretic.  The Higgs field in a complex variation of Hodge structure shifts the sequence of $\Lambda_j$ bundles by one step when it acts, i.e. $\phi:\Lambda_j\rightarrow\Lambda_{j+1}\otimes\omega_X$.  Roughly speaking, the downward flow arises algebraically by considering shifts of weight at least 2.}  There are various ways to compute $\beta$.  Hitchin does so in \cite{MR887284} by exploiting the symplectic structure.  In \cite{PBG:95}, Gothen uses only the K\"ahler form and deformation theory.  Using arguments in the style of Gothen, it can be shown that $\beta(\mathcal E,\phi)$ for a fixed point $(\mathcal E,\phi)$ is \begin{eqnarray} (4g-4)\delta^n_2\sum_{i=1}^{n-2}\sum_{j=i+2}^nr_ir_j-2\delta^n_1\sum_{i=1}^{n-1}(\deg(\Lambda_i^*\otimes\Lambda_{i+1})+(1-g)r_ir_{i+1})\label{IndexFormula}\end{eqnarray}\noindent 
where $\delta^n_j=1$ if $n>j$ and $0$ otherwise.\footnote{We were not able to find an appearance of this exact formula in the literature.  In \S7 of \cite{MR887284}, the rank 2 case of this formula is derived.  It is shown in \cite{MR3158239} how to extract a formula for all ranks from Gothen's calculation of the Morse index, in the case of co-called ``co-Higgs bundles'' on the projective line.  Superficial modifications to the procedure will produce the formula presented above.  See also \cite{MR2308696} for a similar formula in the parabolic Higgs setting.}

To appease the reader concerned as to why $\beta$ is constant on a component $\mathcal N$, we point out a result, credited to Simpson and written down as Lemma 9.2 in \cite{MR1990670}, guaranteeing that the type $(r_1,\dots,r_n)$ and the degrees of the $\Lambda_j$ bundles are constant on a connected component.  Note that\begin{eqnarray}\deg(\Lambda_i^*\otimes\Lambda_{i+1})=-r_{i+1}\deg \Lambda_i+r_i\deg \Lambda_{i+1},\nonumber\end{eqnarray}\noindent and so the right-hand side of \eqref{IndexFormula} depends only on the ranks and degrees of the bundles in the representation of $(\mathcal E,\phi)$ as a variation of Hodge structure.  It follows that the number $\beta(\mathcal N)$ is well-defined.

\section{Example calculations}
\label{section:ExampleCalculations}

\subsection{$\mathcal P_y(1,d)$}
\label{subsection:P1d}

The entire nilpotent cone of $\mathcal M_X(1,d)$ is fixed by the circle action: it is the set $\{(\mathcal L, 0) :\,\mathcal{L}\in\mbox{Jac}^d(X)\}$, which is precisely the set of fixed points of type $(1)$, that is, the moduli space of line bundles of fixed degree.  Hence,\begin{eqnarray}\mathcal P_y(\mathcal M_X(1,d))\;=\; y^0\mathcal P_y(\mbox{Jac}^d(X))\;=\;(1+y)^{2g},\nonumber\end{eqnarray}\noindent agreeing with the explicit description of the moduli space as a fibre bundle $\mathcal M_X(1,d)\cong T^g\times\mathbb{C}^g$.

\subsection{$\mathcal P_y(2,1)$}
\label{subsection:P21}

We turn to the task of computing Betti numbers of $\mathcal M_X(2,1)$, where $X$ is a nonsingular, connected projective algebraic curve of genus $g\geq2$.  The generating function was originally isolated by Hitchin in \S7 of \cite{MR887284}.  The outline here is essentially a reproduction of Hitchin's calculations, although we leave out certain important but cumbersome details.  In particular, we will only present the Betti numbers themselves in the case of $g=2$.

We also need to make clear that Hitchin's calculation is for the ${\fontfamily{cmss}\selectfont\textbf{SL}}_2(\mathbb{C})$-Higgs bundle moduli space, obtained by fixing the determinant $\wedge^2\mathcal E=\mathcal W$, where $\mathcal W$ is some line bundle of degree 1 on $X$.  We denote by $\mathcal M_X(2,\mathcal W)$ the moduli space with this restriction.  On this space, there is an action of the group $\mbox{Jac}^0_2(X)$ of line bundles on $X$ whose square is trivial, by the tensor product with $\mathcal E$ in each $(\mathcal E,\phi)$.  The cohomology of $\mathcal M_X(2,\mathcal W)$ has a part that is \emph{invariant} under this action and a part that is \emph{variant}, to use the terminology of \cite{MR1990670}.  There,  Hausel and Thaddeus define the ${\fontfamily{cmss}\selectfont\textbf{PGL}}_2(\mathbb{C})$ moduli space $\widetilde{\mathcal M}_X(2,\mathcal W)$ as the quotient of $\mathcal M_X(2,\mathcal W)$ by this action.  (This is a geometric quotient by a group scheme.)  In \cite{MR2453601}, it is shown that the generating function for the Betti numbers of the ${\fontfamily{cmss}\selectfont\textbf{GL}}_r(\mathbb{C})$ moduli space factors as\begin{equation}\mathcal P_y(r,1) = (1+y)^{2g}P_y(\widetilde{\mathcal M}_X(r,\mathcal W)).\label{DecompPoincareGL}\end{equation}

The final result of Hitchin's calculation differs slightly from what appears below because his keeps track of both the invariant and the variant cohomology, whereas we need only the invariant cohomology for ours, by \eqref{DecompPoincareGL}. Nevertheless, we can extract the ranks of the invariant pieces of the cohomology from Hitchin's calculation.

By the arguments in \S\ref{section:CircleActionAndComplexVariationsOfHodgeStructure}, the elements of the fixed point set are Higgs bundles of one of two types.  The first type is $(\mathcal E,\phi)$ with $\mathcal E=\mathcal J\oplus \mathcal J^*\otimes\mathcal W$ for some line bundle $\mathcal J$ on $X$, and $\phi:\mathcal J\rightarrow\mathcal J^*\otimes\mathcal W\otimes\omega_X$.  The data of such a fixed point is therefore a pair $(\mathcal J,\phi)$, where $\mathcal J$ is a line bundle and $\phi$ is a section of the line bundle $(\mathcal J^*)^{2}\mathcal W\omega_X$.  These are the fixed points of type $(1,1)$.  The other type is $(\mathcal E,0)$, where $\mathcal E$ is any rank 2 stable bundle with $\wedge^2\mathcal E=\mathcal W$.  These are the fixed points of type $(2)$.

The points of type $(2)$ form a copy of the moduli space $\mathcal{U}_X(2,\mathcal W)$ of stable rank 2 bundles with the fixed degree 1 determinant $\mathcal  W$.  This copy sits inside the nilpotent cone of $\mathcal M_X(2,\mathcal W)$ and is the limit of the downward flow of the Morse-Bott function.  As such, the Morse index for this component is $0$, which agrees with formula \eqref{IndexFormula}.

For type $(1,1)$ we ask: which line bundles $\mathcal J$ are permissible?  Let $m=\deg(\mathcal J)$.  Since $\mathcal J^*\mathcal W$ is the kernel of $\phi$, we must have $-m+1<\frac{1}{2}$, by stability.  If $\phi$ were identically zero, then $\mathcal J$ would be an invariant sub-line bundle with slope exceeding $\frac{1}{2}$, and so the resulting Higgs bundle would be unstable.  Subsequently, slope stability necessitates that $\phi\neq0$, which in turn necessitates that the degree of $(\mathcal J^*)^{2}\mathcal W\omega_X$ is nonnegative: $-2m+1+2g-2\geq0$.  Combining this information, we have the condition\begin{eqnarray}\displaystyle1\,\leq \,m\,\leq\, g-1.\label{Interval}\end{eqnarray}

\begin{theorem} \emph{(Hitchin, \cite{MR887284})} Let $\mathcal N^{1,1}$ be the set of type $(1,1)$ fixed points in $\mathcal M_X(2,\mathcal W)$.  Each connected component of $\mathcal N^{1,1}$ is a degree $2^{2g}$ covering of ${\fontfamily{cmss}\selectfont\textbf{S}}^{-2m+2g-1}(X)$, the symmetric product of $X$ with itself $-2m+2g-1$ times, for some integer $m$ in the interval \eqref{Interval}.\end{theorem}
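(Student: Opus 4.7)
The plan is to parametrize the type $(1,1)$ fixed points explicitly, build a forgetful map to the symmetric product, and identify the fibre as a torsor of square roots. A type $(1,1)$ fixed point of $\mathcal{M}_X(2,\mathcal{W})$ is a stable Higgs bundle of the form $(\mathcal{E},\phi) = (\mathcal{J}\oplus\mathcal{J}^*\otimes\mathcal{W},\phi)$ with $\phi\colon\mathcal{J}\to\mathcal{J}^*\otimes\mathcal{W}\otimes\omega_X$ nonzero. By Simpson's lemma, the integer $m = \deg\mathcal{J}$ is constant on each connected component of $\mathcal{N}^{1,1}$, so the components are indexed (at least) by the admissible values $m\in\{1,\ldots,g-1\}$ derived in \eqref{Interval}. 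Write $\mathcal{N}^{1,1}_m$ for the locus of fixed points with $\deg\mathcal{J}=m$.

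The first step is to describe $\mathcal{N}^{1,1}_m$ as a set-theoretic quotient. Because the two summands of $\mathcal{E}$ are the distinct eigenbundles of the infinitesimal generator $\Psi$ of the $S^1$-stabiliser, any isomorphism of two such Higgs bundles splits as $\alpha=\alpha_1\oplus\alpha_2$ with $\alpha_i\in\mathbb{C}^\times$; the fixed-determinant condition $\alpha_1\alpha_2=1$ leaves a residual $\mathbb{C}^\times$ that rescales $\phi$ by $\alpha_1^{-2}$. Hence a type $(1,1)$ fixed point amounts to a line bundle $\mathcal{J}\in\mathrm{Jac}^m(X)$ together with a projective class $[\phi]\in\mathbb{P}(H^0(X,(\mathcal{J}^*)^{2}\mathcal{W}\omega_X))$. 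Passing from $[\phi]$ to its zero divisor turns this projective class into an effective divisor $D$ of degree $\deg\bigl((\mathcal{J}^*)^2\mathcal{W}\omega_X\bigr)=2g-2m-1$; conversely, $D$ together with the isomorphism class of $\mathcal{O}(D)$ recovers $[\phi]$. Thus $\mathcal{N}^{1,1}_m$ is identified with the set of pairs $(\mathcal{J},D)$ where $D\in{\fontfamily{cmss}\selectfont\textbf{S}}^{2g-2m-1}(X)$ and $\mathcal{J}^2\cong\mathcal{W}\omega_X(-D)$.

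The forgetful map $p_m\colon\mathcal{N}^{1,1}_m\to{\fontfamily{cmss}\selectfont\textbf{S}}^{2g-2m-1}(X)$, $(\mathcal{J},D)\mapsto D$, is now the candidate covering. Its fibre over $D$ is the set of square roots of the fixed line bundle $\mathcal{W}\omega_X(-D)\in\mathrm{Jac}^{2m}(X)$, which is a torsor under the $2$-torsion subgroup $\mathrm{Jac}^0(X)[2]\cong(\mathbb{Z}/2\mathbb{Z})^{2g}$. In scheme-theoretic language, $\mathcal{N}^{1,1}_m$ is the fibre product of the squaring isogeny $(\,\cdot\,)^2\colon\mathrm{Jac}^m(X)\to\mathrm{Jac}^{2m}(X)$ with the morphism ${\fontfamily{cmss}\selectfont\textbf{S}}^{2g-2m-1}(X)\to\mathrm{Jac}^{2m}(X)$ sending $D\mapsto\mathcal{W}\omega_X(-D)$. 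This realises $p_m$ as a finite étale cover of degree $2^{2g}$, giving the covering number asserted in the theorem.

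The remaining point, and the one I expect to be the main obstacle, is to check that $p_m$ is connected, so that each value of $m$ yields a single component rather than a union of $2^{2g}$ sheets. Equivalently, one must show that the monodromy homomorphism $\pi_1({\fontfamily{cmss}\selectfont\textbf{S}}^{2g-2m-1}(X))\to\mathrm{Jac}^0(X)[2]$ classifying the pulled-back torsor is surjective. The composite ${\fontfamily{cmss}\selectfont\textbf{S}}^{d}(X)\to\mathrm{Jac}^d(X)\xrightarrow{\sim}\mathrm{Jac}^{2m}(X)$ is the Abel--Jacobi map up to translation, and a standard fact about symmetric products of curves is that the Abel--Jacobi map induces a surjection $\pi_1({\fontfamily{cmss}\selectfont\textbf{S}}^d(X))\twoheadrightarrow H_1(X,\mathbb{Z})$ for every $d\geq 1$; composing with the reduction mod $2$ map $H_1(X,\mathbb{Z})\to(\mathbb{Z}/2)^{2g}$ keeps the surjectivity, yielding connectedness. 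Combining this with the preceding description of $p_m$ completes the proof.
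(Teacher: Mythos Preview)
Your argument is correct and follows essentially the same route as the paper's: both identify a type $(1,1)$ fixed point with a pair $(\mathcal{J},[\phi])$, send it to the zero divisor $D\in{\fontfamily{cmss}\selectfont\textbf{S}}^{2g-2m-1}(X)$, and recognize the fibre as the $2^{2g}$ square roots of a fixed even-degree line bundle. You actually go further than the paper by giving the scheme-theoretic description as a fibre product along the squaring isogeny and by explicitly verifying connectedness of each $\mathcal{N}^{1,1}_m$ via the monodromy of the Abel--Jacobi map---a point the paper's proof leaves unaddressed.
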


\begin{proof} Pick a line bundle $\mathcal J\in\mbox{Jac}^m(X)$, where $m$ satisfies \eqref{Interval}, and such that $H^0(X,(\mathcal J^*)^2\mathcal W\omega_X)\backslash\{0\}$ is nonempty.  The vanishing locus of each $\phi\in H^0(X,(\mathcal J^*)^2\mathcal W\omega_X)\backslash\{0\}$ is an effective divisor $D\in{\fontfamily{cmss}\selectfont\textbf{S}}^{-2m+2g-1}(X)$.  Conversely, the data of $D$ determines a line bundle $\mathcal Q$ of degree $2m$ and a holomorphic section $[\phi]\in\mathbb{P}(H^0(X,\mathcal Q^*\mathcal W\omega_X)\backslash\{0\})$ vanishing on $D$.  To recover $\mathcal J$, we take a holomorphic square root of $\mathcal{Q}$ in $\mbox{Jac}^m(X)$.  Such roots exist because $\mathcal{Q}$ has even degree.  There are $2^{2g}$ such square roots.

Although $D$ recovers $[\phi]$ only up to projective equivalence, the line bundle $\mathcal J$ has $\mathbb{C}^\times$-many automorphisms, which identify elements lying on the same line in $H^0(X,(\mathcal J^*)^2\mathcal W\omega_X)\backslash\{0\}$.  Consequently, two pairs $(\mathcal J,\phi)$ and $(\mathcal J,c\phi)$ which differ only by some $c\in\mathbb{C}^\times$ represent the same fixed point in the moduli space.

Finally, we recall from above that, since $\mathcal J$ is an eigenspace for the generator $\Psi$, its degree is constant on connected components of the fixed point set of the $S^1$ action.\qed\end{proof}

Let us restrict now to the case where the genus is $2$.  The Betti numbers of the moduli space of rank 2 vector bundles with odd degree and (any) fixed determinant are encoded by\begin{eqnarray}1+y^2+4y^3+y^4+y^6,\nonumber\end{eqnarray}\noindent according to the calculation by Atiyah and Bott \cite{MR702806}.  The moduli space of bundles with fixed determinant contributes nothing to the variant cohomology of $\mathcal M(2,\mathcal W)$, as per \cite{MR0364254}, and so we append this polynomial to our overall calculation without further adjustment.

As for the type $(1,1)$ fixed points, there is only one permissible degree for the line bundle $\mathcal J$, which is $m=1$.  The result is that $\mathcal N^{1,1}$ is connected and is a $16$-fold covering of $X$ itself (since $-2m+2g-1=1$).  However, we wish to find the Betti numbers of $\widetilde{\mathcal M}_X(2,\mathcal W)$ rather than those of $\mathcal M_X(2,\mathcal W)$.  The action of $\mbox{Jac}^0_2(X)$ is transitive on the fibres of the covering of $X$, and so the corresponding fixed point set of the $S^1$ action in $\widetilde{\mathcal M}_X(2,\mathcal W)$ is just a copy of $X$ itself\footnote{The $\mathbb{C}^\times$-action commutes with the action of $\mbox{Jac}^0_2(X)$ \cite{MR1990670}.}. Therefore, the Betti numbers of the type $(1,1)$ locus are those of $X$ itself: $b_0=b_2=1$, $b_1=4$, and $b_k=0$ for $k>2$.

Finally, by \eqref{IndexFormula} and \eqref{DecompPoincareGL}, the Betti numbers of $\mathcal M_X(2,1)$ are generated in the case of $g=2$ by\footnote{Hitchin's $g=2$ generating function in \cite{MR887284} for the ${\fontfamily{cmss}\selectfont\textbf{SL}}_2(\mathbb{C})$-Higgs bundle moduli space is $1+y^2+4y^3+2y^4+34y^5+2y^6$, which is the same as the second factor of our presentation of $\mathcal P_y(2,1)$ save for a difference in the coefficient of $y^5$.}\begin{eqnarray}\mathcal P_y(2,1) & = & (1+y)^{4}(\mathcal P_y(\mathcal U_X(2,P))+y^4\mathcal P_y(\mathcal N^{1,1}))\nonumber\\ & = & (1+y)^4(1+y^2+4y^3+y^4+y^6+y^4(1+4y+y^2))\nonumber\\ & = & (1+y)^4(1+y^2+4y^3+2y^4+4y^5+2y^6)\nonumber.\end{eqnarray}

Readers interested not only in the Betti numbers but also in the actual structure of the cohomology ring of $\mathcal M_X(2,1)$ should consult \cite{MR1949162,MR2044052,MR1887889}, where the generators and relations are worked out explicitly.

Betti numbers for the rank $3$ case were calculated by Gothen in \cite{MR1298999,PBG:95}. The underlying technique is the same, but a major complication in that case comes from the existence of fixed points of types $(2,1)$ and $(1,2)$.  For these types, the calculation relies in a crucial way upon results of Thaddeus on moduli spaces of vector bundles with sections \cite{MR1273268}. Fixed points of type $(2,1)$ and $(1,2)$ are special cases of what are called \emph{holomorphic triples}, studied extensively in the case of compact Riemann surfaces of genus $g\geq2$ in \cite{MR2030379}.  The rank $4$ Betti numbers were computed recently in \cite{GHS:11}, using a motivic version of Hitchin's method.

\subsection{Degree independence of Betti numbers}
\label{subsection:DegreeIndependenceOfBettiNumbers}

Finally, we remark that Betti numbers of $\mathcal M_X(r,d)$ are independent of $d$.  This was shown by Hausel and Rodriguez-Villegas in \cite{MR2453601}.  They proved first that the Betti numbers of the moduli space of twisted representations of $\pi_1 X$ are independent of the root of unity used to define the twist.  It follows from the diffeomorphism afforded by nonabelian Hodge theory that $\mathcal M_X(r,d)$ has Betti numbers independent of $d$.

This is significant because there is no reason \emph{a priori} to assume that the subvarieties of $\mbox{Nilp}_X(r,d)$ and $\mbox{Nilp}_X(r,d')$ consisting of complex variations of Hodge structure have, for instance, the same number of connected components.

\bibliographystyle{spmpsci}
\bibliography{IntroNAHT}

\end{document}